\documentclass[11pt, twoside]{amsart}

\usepackage[T1]{fontenc}
\usepackage{amssymb,amsmath,mathtools,amsthm}
\usepackage{tikz}
\usepackage{graphicx}
\usepackage{hyperref}

\theoremstyle{plain}
\newtheorem{theor}{Theorem}[section]
\newtheorem{lem}[theor]{Lemma}

\newtheorem{cor}[theor]{Corollary}

\theoremstyle{definition}
\newtheorem{defin}[theor]{Definition}

\newtheorem{notation}[theor]{Notation}
\newtheorem{exam}[theor]{Example}

\theoremstyle{remark}

\newtheorem{rem}[theor]{Remark}

\numberwithin{equation}{section}

\newcommand{\mc}{\mathcal}

\DeclareMathOperator{\Ent}{Ent}

\newcommand{\mbbE}{\mathbb{E}}
\newcommand{\mbbP}{\mathbb{P}}
\newcommand{\mbbN}{\mathbb{N}}
\newcommand{\mbbR}{\mathbb{R}}
\newcommand{\mbbZ}{\mathbb{Z}}

\newcommand{\mbG}{\mathbf{G}}

\title[Exponential random graph models with soft clique constraints]
{Exponential random graph models with \\ soft clique constraints}

\author{Yasmin Tousinejad and Vera Koponen}

\address{Department of Mathematics, Uppsala University, Sweden.}
\email{yasmin.tousinejad@math.uu.se}

\address{Department of Mathematics, Uppsala University, Sweden.}
\email{vera.koponen@math.uu.se}

\date{31 August, 2026}

\begin{document}

\begin{abstract}
Let $r\geq3$ be fixed, and let $\mathbf{G}_n$ be the set of all simple graphs with vertex set $[n]=\{1,\ldots,n\}$.
We consider an exponential random graph model which gives higher probability to $G \in \mathbf{G}_n$
than to $H \in \mathbf{G}_n$ if $G$ has fewer $r$-cliques than $H$. But all graphs in $\mathbf{G}_n$ have positive
probability. The degree to which graphs with fewer $r$-cliques are given higher probability is determined by a 
positive weight $w$. We prove that, asymptotically almost surely as $n \to \infty$, a random graph from $\mathbf{G}_n$
has a vertex partition into $r-1$ parts of roughly equal size, the density of edges between the parts is close to $1/2$,
and for every $\varepsilon > 0$ the density of edges within any part is less than $\varepsilon$.
The asymptotic structural properties are independent of the weight $w$ as long as it is positive.
We also extend the result to the context of several clique sizes, each one with its own weight.
\end{abstract}

\maketitle

\section{Introduction}\label{sec:introduction}

\noindent
Studies of large graphs, or other structures, subject to certain hard constraints have a long history in the field
of random structures. 
The classical result of Erd\H{o}s, Kleitman and Rothschild
\cite{ErdosKleitmanRothschild1976}
states that the proportion of bipartite graphs among all triangle-free graphs
with vertices $1,\ldots,n$ tends to $1$ as $n\to\infty$.
More generally, Kolaitis, Pr\"omel and Rothschild
\cite{KolaitisPromelRothschild1987}
proved that, for every fixed $r\geq3$, the proportion of $(r-1)$-partite graphs among all
$K_r$-free graphs with vertices $1,\ldots,n$ tends to $1$ as $n\to\infty$.
We can say that asymptotically almost surely (a.a.s.) a $K_r$-free graph is $(r-1)$-partite.
Later studies have considered the a.a.s. structure of $H$-free graphs for other ``forbidden'' subgraphs $H$
(e.g. \cite{BBS, BBS2, HPS}),
and yet other investigations have considered forbidding certain {\em induced} subgraphs
(e.g. \cite{BB, KKOT, PS, Ree}).
Some studies have considered the a.a.s. structure under constraints other than forbidding a {\em finite}
number of subgraph or
induced subgraph configurations (e.g. \cite{BBCLS, BMSW, Kop12, Wor}), and some have considered structures other than graphs
(e.g. \cite{KR, Ter}).

In all mentioned examples, the probability distribution on structures with vertex set $[n] := \{1, \ldots, n\}$
that satisfy the given constraints is the uniform one.
One can reformulate the problem by considering all graphs (or some other kind of structure) with vertex set
$[n]$, assigning probability 0 to graphs that do not satisfy the constraints and equal probability to the other graphs.
An alternative is to not assign probability 0 to graphs that violate the constraints, but to instead consider a
probability distribution on graphs with vertex set $[n]$ with the following property:
A graph with more violations of the constraints is less likely than a graph with fewer violations of the constraints,
but graphs with violations can still have positive probability.
Since the constraints may be violated in this setting, it is more appropriate to call them \emph{soft constraints}.
This idea is present in so-called \emph{exponential random graph models (ERGMs)} \cite{Robins2007,ChatterjeeDiaconis2013}.
The same idea also underlies probability distributions defined by so-called
\emph{Markov logic networks (MLNs)} \cite{RichardsonDomingos2006},
which are formal specification models in the field of
\emph{Statistical Relational Artificial Intelligence (SRAI)} \cite{BKNP, RKNP, GT}.
MLNs can be defined for finite relational structures, not just graphs, but when restricted to graphs, they
are special cases of ERGMs.
Not much is known about \emph{structural} properties of large random structures defined by fixed-weight MLNs.
The investigations most directly related to this topic that we are aware of are
\cite{KoponenDomainSizeAsymptotics, TousinejadKoponen2026}.

Our main result concerns an ERGM, or equivalently an MLN, on random graphs which ``penalizes'' (for any fixed $r \geq 3$)
$r$-cliques, that is, copies of the complete graph on $r$ vertices, denoted $K_r$.
In other words, a graph with more $r$-cliques will be less likely than a graph with fewer $r$-cliques, 
but all graphs have positive probability.
The strength of the penalization is expressed by a positive weight $w$.
The precise definition of the distribution is given in 
Definition~\ref{definition of distribution}.
Informally speaking, our main result,
Theorem~\ref{thm:main-clique},
shows that, for every fixed positive weight $w$ and every $\varepsilon > 0$, with probability tending to 1
as the number $n$ of vertices tends to infinity, the following holds for a random graph:
\begin{enumerate}
\item There is a partition of the vertex set into $r-1$ parts, each of size close to $\tfrac{n}{r-1}$.
\item The density of edges between any two different parts is close to $\tfrac{1}{2}$.
\item For every $\varepsilon > 0$, if $n$ is large enough then the density of edges within any part is less than $\varepsilon$.
\end{enumerate}
Note that the asymptotic almost sure structure of a random graph does {\em not} depend on the weight $w$ 
(as long as $w > 0$).

We also prove
(Corollary~\ref{cor:exact-lower-colorability})
that if $r \geq 4$ then the probability that a random graph is $(r-2)$-partite tends to 0 
exponentially fast as the number of vertices tends to infinity.

A further consequence of our results is a stochastic block model limit for fixed induced subgraphs
(Corollary~\ref{cor:sbm-limit}). Informally speaking, for every fixed $m$, as $n$ tends to infinity,
the subgraph induced by $[m]$ converges in distribution to a stochastic block model
with $r-1$ classes such that each of the $m$ vertices is assigned independently and uniformly
to one of these classes. Given these assignments, there are no edges within a class,
and each possible edge between different classes is included independently with probability $\tfrac{1}{2}$.
The limiting distribution depends on $r$, but not on $w$.

We can refine the probability model considered so that we consider some integers $3 \leq r_1 < \ldots < r_k$
and for each $r_i$ we penalize an $r_i$-clique with weight $w_i > 0$. 
(The larger $w_i$ is, the more strongly $r_i$-cliques are penalized.)
We show
(Theorem~\ref{thm:multi-clique-collapse})
that, for all choices of weights $w_1, \ldots, w_k > 0$, 
the asymptotic behaviour of random graphs under this model is determined by $r_1$.
More precisely, with probability tending to 1 as the number $n$ of vertices tends to infinity,
a random graph will satisfy conditions~(1) -- (3) above if we let $r := r_1$.

Our main results are proved by using graphons
\cite{BorgsChayesLovaszSosVesztergombi2008,Lovasz2012,Janson2013}, which are analytic objects used to represent limits of sequences of dense graphs.
The article is organized as follows.
Just below we introduce the general notation and terminology that will be used.
In Section~\ref{sec:model} we define the probability model and state the main results
(in Theorem~\ref{thm:main-clique}).
Section~\ref{sec:graphons} defines the concept of graphon, as well as related concepts, and proves some basic results
that will be used later.
In Section~\ref{sec:entropy-stability} we work with the concepts of entropy of graphons,
homomorphism density of $K_r$ in graphons, and the cut distance of a graphon from a 
``balanced $(r-1)$-partite graphon'',
and show how they are related.
In Section~\ref{sec:free-energy} we prove Theorem~\ref{thm:main-clique}.
Section~\ref{sec:examples-consequences}
derives some consequences of Theorem~\ref{thm:main-clique} which were briefly described above.

\subsection*{General notation and terminology}\label{Notation and terminology}

By $\mbbN$ we denote the set of positive integers. For $n \in \mbbN$, $[n] := \{1, \ldots, n\}$.
For $n\in\mbbN$, let $\mbG_n$ be the set of all undirected graphs without loops (i.e. simple graphs)
with vertex set $[n]$.
For every finite graph $G$, let $E(G)$ denote its edge set and let $\chi(G)$ denote its chromatic number. 

If $n\in\mbbN$, $G\in\mbG_n$, 
and $A\subseteq [n]$, set
\[
e_G(A)\coloneqq \bigl|\{\{u,v\}\in E(G):\ u,v\in A\}\bigr|.
\]
If in addition $B\subseteq [n]$ is disjoint from $A$, define
\[
e_G(A,B)\coloneqq \bigl|\{\{u,v\}\in E(G):\ |\{u,v\}\cap A|=1,\ |\{u,v\}\cap B|=1\}\bigr|.
\]
For $n\in\mbbN$ and graphs $G,H\in\mbG_n$, define
the {\em $n^2$-normalized edge-edit distance} between $G$ and $H$ by
\[
d_{\mathrm{edit}}(G,H)
\coloneqq
\frac{1}{n^2}\,|E(G)\triangle E(H)|.
\]
Here $\triangle$ denotes symmetric difference. Thus $d_{\mathrm{edit}}(G,H)$ is the number of edge insertions and deletions needed to turn $G$ into $H$, normalized by $n^2$.

For $n\in\mbbN$ and $p\in[0,1]$, the notation
\[
\mathsf H_n\sim G(n,p)
\]
means that $\mathsf H_n$ is a random element of $\mbG_n$ sampled
by including an unordered pair $\{i,j\}$ as an edge with probability $p$, independently of whether other pairs have
been included or not.
In particular, $\mathsf H_n\sim G(n,\tfrac12)$ means that every graph in $\mbG_n$ has probability
$2^{-\binom n2}$, so $\mathsf H_n$ is uniformly distributed on $\mbG_n$.

For every integer $s\ge2$, 
$K_s$ denotes a complete graph with $s$ vertices. 
For every finite graph $G$, let $K_s(G)$ denote the number of \emph{unordered} copies of $K_s$ in $G$, in other words, the number of subgraphs of $G$ that are isomorphic to $K_s$.

\section{The probability model and the main results}\label{sec:model}

\noindent
Throughout this article $r\ge 3$ is a fixed integer and $w>0$ is a fixed real number interpreted as the ``{\em weight}'' 
that will be associated to
every $r$-tuple of vertices of a graph that does not form an $r$-clique.

\subsection{The probability distribution}

\begin{defin}\label{definition of N-r}{\rm
For every $n \in \mbbN$ and $G \in \mbG_n$, let
\begin{equation}\label{eq:Nphi-r}
N_{r}(G):=n^r-r!\,K_r(G),
\end{equation}
so $N_r(G)$ is the number of ordered $r$-tuples
$(v_1,\ldots,v_r)\in[n]^r$ (with repetitions allowed) that do not
consist of $r$ distinct vertices that form an $r$-clique in $G$.
}\end{defin}

\begin{defin}\label{definition of distribution}{\rm
For each $n \in \mbbN$ and $G\in\mbG_n$, we define its \emph{unnormalized mass} by
\[
\mu_{n,r}(G)\coloneqq \exp\bigl(w\,N_{r}(G)\bigr).
\]
We then define the \emph{partition function} (or \emph{normalizing constant}) by
\[
Z_{n,r}\coloneqq \sum_{J\in\mbG_n}\mu_{n,r}(J).
\]
Finally, we define the \emph{probability measure} of the model by
\[
\mbbP_{n,r}(A)\coloneqq
\frac{1}{Z_{n,r}}\sum_{G\in A}\mu_{n,r}(G)
\qquad \text{ for all } \ A\subseteq\mbG_n.
\]
For a single graph $G\in\mbG_n$, we also set
\[
\mbbP_{n,r}(G)
\coloneqq
\mbbP_{n,r}(\{G\})
=
\frac{\mu_{n,r}(G)}{Z_{n,r}}.
\]
}\end{defin}

\noindent
Since $w$ is fixed in the model, we suppress the dependence on $w$ in the notation $\mu_{n,r}$, $Z_{n,r}$, and $\mbbP_{n,r}$.
Note that if $G, H \in \mbG_n$ and $K_r(G) < K_r(H)$, then $\mbbP_{n, r}(G) > \mbbP_{n, r}(H)$.
The distribution $\mbbP_{n, r}$ is exactly the probability distribution on $\mbG_n$ obtained from a
Markov logic network \cite{RichardsonDomingos2006}
with one soft constraint of the form 
\[
\bigvee_{1 \leq i < j \leq r} \neg E(x_i, x_j) \quad \text{ with weight } w,
\]
where $\neg E(x_i, x_j)$ expresses that there is no edge between $x_i$ and $x_j$.

\begin{rem}\label{rem:pairwise-distinct-groundings} {\rm (On counting only tuples without repetitions)
In Definition~\ref{definition of distribution} we count \emph{all} ordered $r$-tuples (also those with repetitions)
\[
(x_1,\dots,x_r)\in [n]^r.
\]
Another natural convention is to count only those ordered tuples whose entries are pairwise distinct.
Under that convention, the total number of allowed ordered tuples is
\[
n(n-1)\cdots(n-r+1),
\]
and the falsifying tuples are still exactly the $r!$ orderings of the vertices of each copy of $K_r$.
So the number of satisfied groundings becomes
\[
N_{r}^{\mathrm{dist}}(G)
=
n(n-1)\cdots(n-r+1)-r!\,K_r(G).
\]

If we use the same weight $w$, then the corresponding unnormalized mass is
\[
\mu_{n,r}^{\mathrm{dist}}(G)
\coloneqq
\exp\bigl(w\,N_{r}^{\mathrm{dist}}(G)\bigr)
=
\exp\Bigl(w\bigl[n(n-1)\cdots(n-r+1)-r!\,K_r(G)\bigr]\Bigr).
\]
Compare this with our present convention,
\[
\mu_{n,r}(G)
=
\exp\bigl(w[n^r-r!\,K_r(G)]\bigr).
\]
Subtracting the exponents gives
\[
\mu_{n,r}(G)
=
\exp\Bigl(w\bigl[n^r-n(n-1)\cdots(n-r+1)\bigr]\Bigr)\,
\mu_{n,r}^{\mathrm{dist}}(G).
\]
The factor
\[
\exp\Bigl(w\bigl[n^r-n(n-1)\cdots(n-r+1)\bigr]\Bigr)
\]
is \emph{deterministic} in the sense that it depends only on $n$, $r$, and $w$, and not on the graph $G$.
Therefore this factor changes the partition function, but it disappears when we normalize to obtain probabilities.
Indeed, if
\[
Z_{n,r}^{\mathrm{dist}}
\coloneqq
\sum_{J\in\mbG_n}\mu_{n,r}^{\mathrm{dist}}(J),
\]
then
\[
Z_{n,r}
=
\exp\Bigl(w\bigl[n^r-n(n-1)\cdots(n-r+1)\bigr]\Bigr)\,
Z_{n,r}^{\mathrm{dist}},
\]
and hence
\[
\mbbP_{n,r}(G)
=
\frac{\mu_{n,r}(G)}{Z_{n,r}}
=
\frac{\mu_{n,r}^{\mathrm{dist}}(G)}{Z_{n,r}^{\mathrm{dist}}}.
\]
So, for the same clause weight $w$, the two counting conventions define exactly the same
probability measure on $\mbG_n$.
Thus the choice between these two conventions does not affect which graphs are more or less likely. 
}\end{rem}

\subsection{The equivalent exponential random graph model}

The distribution $\mbbP_{n, r}$ can be defined in the setting of exponential random graph models \cite{Robins2007}.

\begin{defin}\label{def:ERGM}
Fix $n,k\in\mbbN$.
An {\em Exponential Random Graph Model (ERGM)} on $\mbG_n$ is a probability distribution of the form
\[
\mbbP_{n,\vartheta}(G)
:=
\frac{\exp\bigl(\sum_{i=1}^k \vartheta_i\,T_i(G)\bigr)}
{Z_n^{\mathrm{ERGM}}(\vartheta)}
\qquad \text{for all } \  G\in\mbG_n,
\]
where $T_1,\dots,T_k$ are real-valued functions on $\bigcup_{n\in\mbbN} \mbG_n$, 
$\vartheta=(\vartheta_1,\dots,\vartheta_k)\in\mbbR^k$
is a parameter vector, and
\[
Z_n^{\mathrm{ERGM}}(\vartheta)
:=
\sum_{J\in\mbG_n}\exp\bigl(\sum_{i=1}^k \vartheta_i\,T_i(J)\bigr)
\]
is the normalizing constant.
All else being equal, a positive value of $\vartheta_i$ rewards larger values of $T_i(G)$ in the exponent,
while a negative value penalizes them.
\end{defin}

\noindent
Note that if $k = 1$, $\vartheta_1 = w$, and $T_1(G) = N_{r}(G)$, then
\[
\mbbP_{n, r}(G) = \mbbP_{n, \vartheta}(G) \quad \text{ for all } \ G \in \mbG_n.
\]
Hence $\mbbP_{n, r}$ is an ERGM.

\subsection{The reduced partition function}

It will be convenient to work with the quantities defined below.

\begin{defin}\label{definition of reduced partition function}
(a) For all $n \in \mbbN$, let 
\begin{equation}\label{eq:Ztilde-r}
\begin{aligned}
&\widetilde Z_{n,r} \coloneq \widetilde Z_{n, r}(w)
\coloneqq
\sum_{J\in\mbG_n}\exp\bigl(-r!\,w\,K_r(J)\bigr) 
\ \text{ and } \\
&F_{n, r} \coloneq F_{n,r}(w)\coloneqq \frac{1}{n^2}\ln \widetilde Z_{n,r}.
\end{aligned}
\end{equation}
We call $\widetilde Z_{n,r}$ the {\em reduced partition function} and $F_{n,r}(w)$ the {\em reduced free energy}.

\noindent
(b) For a graph $G\in\mbG_n$, we call
\[
\exp\bigl(-r!\,w\,K_r(G)\bigr)
\]
the \emph{reduced mass} of $G$. Thus $\widetilde Z_{n,r}$ is the sum of the
reduced masses of all graphs in $\mbG_n$.
\end{defin}

The partition function $Z_{n,r}$ sums the unnormalized masses of all
graphs in $\mbG_n$. As the next lemma shows, its logarithm contains the
graph-independent term $wn^r$. This term changes the normalizing constant
but not the probability measure, so we will often remove it and work with the reduced
partition function and reduced free energy.

\begin{lem}\label{cor:ERGM}
For every $n\in\mbbN$ and every $G\in\mbG_n$,
\[
\mu_{n,r}(G)
=
\exp\bigl(w\,n^r\bigr)\exp\bigl(-r!\,w\,K_r(G)\bigr).
\]
Moreover,
\[
Z_{n,r}
=
\exp\bigl(w\,n^r\bigr)\,\widetilde Z_{n,r},
\]
and
\begin{equation}\label{eq:showing-ergm}
\mbbP_{n,r}(G)
=
\frac{\exp\bigl(-r!\,w\,K_r(G)\bigr)}
{\sum_{J\in\mbG_n}\exp\bigl(-r!\,w\,K_r(J)\bigr)}=\frac{\exp\bigl(-r!\,w\,K_r(G)\bigr)}{\widetilde Z_{n,r}}.
\end{equation}
\end{lem}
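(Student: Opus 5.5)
The argument is a direct unwinding of Definitions~\ref{definition of N-r}, \ref{definition of distribution} and \ref{definition of reduced partition function}, using only the identity $e^{a+b}=e^ae^b$ and the fact that a factor common to every term of a finite sum can be pulled out of the sum. Nothing from the later graphon machinery is needed.

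First we establish the formula for $\mu_{n,r}(G)$. By \eqref{eq:Nphi-r}, $N_r(G)=n^r-r!\,K_r(G)$, so
\[
\mu_{n,r}(G)=\exp\bigl(w\,n^r-r!\,w\,K_r(G)\bigr)=\exp\bigl(w\,n^r\bigr)\exp\bigl(-r!\,w\,K_r(G)\bigr).
\]
The key point is that the first factor depends only on $n$, $r$ and $w$, and not on $G$.

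Next we sum this identity over all $J\in\mbG_n$. Since $\exp(w\,n^r)$ is the same for every $J$, it factors out of the finite sum, which gives
\[
Z_{n,r}=\exp\bigl(w\,n^r\bigr)\sum_{J\in\mbG_n}\exp\bigl(-r!\,w\,K_r(J)\bigr)=\exp\bigl(w\,n^r\bigr)\,\widetilde Z_{n,r},
\]
by \eqref{eq:Ztilde-r}.

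Finally, we substitute both expressions into $\mbbP_{n,r}(G)=\mu_{n,r}(G)/Z_{n,r}$. The nonzero common factor $\exp(w\,n^r)$ cancels, and this yields \eqref{eq:showing-ergm}; the two displayed forms there are equal by the definition of $\widetilde Z_{n,r}$.

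No step presents a genuine obstacle. The only point that needs care is confirming that the subtracted term in $N_r(G)$ is exactly $r!\,K_r(G)$, so that the constant $n^r$ separates cleanly. This is the same cancellation mechanism already used in Remark~\ref{rem:pairwise-distinct-groundings}.
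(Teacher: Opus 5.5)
Your proposal is correct and takes essentially the same route as the paper. Like the paper, you substitute $N_r(G)=n^r-r!\,K_r(G)$ into $\mu_{n,r}(G)$, factor out the graph-independent term $\exp(w\,n^r)$, pull it out of the sum defining $Z_{n,r}$, and cancel it in $\mbbP_{n,r}(G)=\mu_{n,r}(G)/Z_{n,r}$.
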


\begin{proof}
Insert \eqref{eq:Nphi-r} into the definition of $\mu_{n,r}(G)$:
\[
\mu_{n,r}(G)
=
\exp\bigl(w\,[n^r-r!\,K_r(G)]\bigr).
\]
Now factor out the deterministic term $e^{w n^r}$.
The formulas for $Z_{n,r}$ and $\mbbP_{n,r}$ follow immediately.
\end{proof}

\subsection{Main results}\label{Main results}

Recall that we have fixed an integer $r \geq 3$ and a real $w > 0$.
The notation
\[
\mathsf G_n\sim \mbbP_{n,r}
\]
means that $\mathsf G_n$ is a random graph from $\mbG_n$ under the distribution $\mbbP_{n,r}$.
(So if $J \in \mbG_n$ then $\mbbP(\mathsf G_n=J) = \mbbP_{n,r}(J)$.)
Recall also notation introduced in Section~\ref{Notation and terminology}.
Some of the notions appearing in the main result below are defined only later:
A {\em graphon} is defined in Definition~\ref{defin:graphon}.
For a graph $G\in\mbG_n$, its {\em empirical graphon} $W_G$ is defined in Definition~\ref{def:empirical-graphon}.
The {\em cut distance} $\delta_\square$
between two graphons is defined in Definition~\ref{defin:cutdistance}, 
and the cut distance from a graphon to a family is defined in Definition~\ref{defin:cutdistance-family}.
The family $B_q^\star$ is defined in Definition~\ref{def:Bqstar}; informally, it consists of the balanced $q$-block graphons that are equal to $0$ on the diagonal blocks and equal to $\tfrac12$ on the off-diagonal blocks.

\begin{theor}
\label{thm:main-clique}
Fix $r\ge 3$, set $q=r-1$.
Let $\mathsf G_n\sim\mbbP_{n,r}$.
Then the following hold.

\begin{enumerate}
\item[\textup{(i)}]
The reduced free energy satisfies
\[
\lim_{n\to\infty}F_{n,r}(w)
=
\frac{\ln 2}{2}\left(1-\frac1q\right).
\]

\item[\textup{(ii)}]
For every $\varepsilon>0$, there are constants $c=c(\varepsilon,r)>0$ and
$N=N(\varepsilon,r,w)$ such that, for every $n\ge N$,
\[
\mbbP\left(
\delta_\square(W_{\mathsf G_n},B_q^\star)\ge \varepsilon
\right)
\le
e^{-cn^2}.
\]

\item[\textup{(iii)}]
For every $\varepsilon>0$, there are constants $c=c(\varepsilon,r)>0$ and
$n_0=n_0(\varepsilon,r,w)$ such that, for every $n\ge n_0$, with probability at
least $1-e^{-cn^2}$, the graph $\mathsf G_n$ has a partition
\[
[n]=V_1\sqcup\cdots\sqcup V_q
\]
with the following properties:
\begin{enumerate}
\item Each part is almost balanced:
\[
\bigl||V_i|-n/q\bigr|\le \varepsilon n
\qquad\text{for every }i\in[q].
\]

\item Each between-part edge density is close to $\tfrac12$:
\[
\left|\frac{e_{\mathsf G_n}(V_i,V_j)}{|V_i||V_j|}-\frac12\right|\le \varepsilon
\qquad\text{for all distinct }i,j\in[q].
\]

\item The total number of edges inside the parts satisfies
\[
\sum_{i=1}^q e_{\mathsf G_n}(V_i)
\le
\frac{\varepsilon n^2}{10q}.
\]

\item The total density of edges inside the parts is at most $\varepsilon$:
\[
\frac{\sum_{i=1}^q e_{\mathsf G_n}(V_i)}{\sum_{i=1}^q \binom{|V_i|}{2}}
\le \varepsilon.
\]
\end{enumerate}
In particular, if $\mathsf G_n'$ is obtained from $\mathsf G_n$ by deleting all
edges whose endpoints lie in the same part, then $\mathsf G_n'$ is $q$-partite and
\[
d_{\mathrm{edit}}(\mathsf G_n,\mathsf G_n')
\le
\frac{\varepsilon}{10q}
\le
\frac{\varepsilon}{2}.
\]
\end{enumerate}
\end{theor}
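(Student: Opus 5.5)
The plan is to prove (i)–(iii) together through a large-deviation (Chatterjee–Varadhan type) analysis of $\widetilde Z_{n,r}$. The key observation is that $K_r(G)$ is of order $n^r$ with $r\ge 3$, while the entropy term lives at scale $n^2$. So the clique penalty is of order $n^{r-2}\to\infty$ at the free-energy scale, and in the limit it behaves like a hard constraint forcing $t(K_r,W)=0$.

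\textbf{Lower bound for (i).} Restrict the sum defining $\widetilde Z_{n,r}$ to the graphs that are $q$-partite with respect to a fixed balanced partition $V_1\sqcup\cdots\sqcup V_q$ of $[n]$. These graphs contain no $K_r$, so each has reduced mass $1$. There are $2^{\sum_{i<j}|V_i||V_j|}$ of them, which gives
\[F_{n,r}\ge \frac{\ln 2}{n^2}\sum_{i<j}|V_i||V_j| \to \frac{\ln 2}{2}\Bigl(1-\frac1q\Bigr).\]

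\textbf{Upper bound and concentration, (i) and (ii).} Fix $\eta>0$. For graphs with $K_r(G)\ge \eta n^r$, the reduced mass is at most $e^{-r!w\eta n^r}$. Their total contribution is therefore at most $2^{\binom n2}e^{-r!w\eta n^r}$, which is negligible at scale $e^{n^2}$ because $r\ge3$.

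For graphs with $K_r(G)<\eta n^r$, the homomorphism density satisfies $t(K_r,W_G)\le r!\eta+O(1/n)$. I would count these graphs with the graphon large-deviation/counting machinery of Section~\ref{sec:graphons}. First cover graphon space by finitely many cut-distance balls (compactness of $\delta_\square$). The number of $G\in\mbG_n$ with $W_G$ in a small ball around $U$ is at most $\exp\bigl(n^2(\Ent(U)/2+o(1))\bigr)$, with $\Ent$ normalized so that the constant $\tfrac12$ graphon has entropy $\ln 2$.

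Combining this with continuity of $t(K_r,\cdot)$ in $\delta_\square$, I need the stability statement from Section~\ref{sec:entropy-stability}:
\[\sup\{\Ent(U): t(K_r,U)\le\eta'\}\le \ln2\,(1-1/q)+o_{\eta'}(1),\]
together with the strengthening that if $\delta_\square(U,B_q^\star)\ge\varepsilon$ and $t(K_r,U)\le\eta'$ for small $\eta'=\eta'(\varepsilon)$, then
\[\Ent(U)\le \ln2\,(1-1/q)-2c'(\varepsilon).\]

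Summing over the ball cover gives the matching upper bound for (i). It also gives
\[\mbbP(\delta_\square(W_{\mathsf G_n},B_q^\star)\ge\varepsilon)\le \exp\bigl(n^2[-c'+o(1)]\bigr)\big/\exp\bigl(n^2[\text{limit}-o(1)]\bigr),\]
which yields (ii) with $c=c'/2$. The dependence of $N$ on $w$ enters only through how fast the clique-rich term becomes negligible.

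\textbf{Main obstacle.} The hardest step is the entropy stability inequality. It is a graphon version of Erdős–Simonovits stability combined with the entropy analogue of the Kolaitis–Prömel–Rothschild count. My first attempt would be the following:

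1. Apply the graphon removal lemma. If $t(K_r,U)$ is small, $U$ is close in $L^1$ to a $K_r$-free graphon $U'$ whose support is $K_r$-free.
2. Since $\Ent(p)\le \ln 2\cdot\mathbf 1_{p>0}$ pointwise, $\Ent(U')\le \ln 2\cdot\lambda(\operatorname{supp}U')$.
3. By Turán, $\lambda(\operatorname{supp}U')\le 1-1/q$. Erdős–Simonovits stability then forces the support to be close to a balanced complete $q$-partite set.
4. Equality in the entropy bound forces the value to be close to $\tfrac12$ on most of the support, by strict concavity of $\Ent$ near its maximum.
5. Because $\Ent$ is uniformly continuous and entropy is upper semicontinuous in $\delta_\square$ (the step where cut-distance rather than $L^1$ proximity must be handled), this closeness transfers back to $U$.

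\textbf{Combinatorial statement (iii).} I would pass from (ii) to (iii) by a standard rounding argument. Suppose $\delta_\square(W_{\mathsf G_n},W)<\varepsilon'$ for some $W\in B_q^\star$ with blocks $I_1,\dots,I_q$. Choose a measure-preserving rearrangement nearly achieving this, and pull the blocks back to a vertex partition $V_1,\dots,V_q$ of $[n]$. Rounding step functions to vertices costs $O(1/n)$; the paper's Section~\ref{sec:graphons} presumably shows $\delta_\square$ is attained up to $o(1)$ by vertex permutations for step graphons. Testing the cut norm on the sets $V_i\times V_j$ then gives the following:

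(a) balance, since $|V_i|/n\approx 1/q$ directly from the pull-back;

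(b) between-part densities within $O(\varepsilon')$ of $\tfrac12$;

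(c) $\sum_i e(V_i)\le \varepsilon' n^2$.

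Choosing $\varepsilon'$ as a small multiple of $\varepsilon^2/q$ yields (a)–(d) and the edit-distance consequence, with $c$ inherited from (ii).
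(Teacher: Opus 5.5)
Your treatment of (i) and (ii) is sound and is essentially the paper's argument for (ii). Graphs with many cliques are negligible because the penalty is of order $n^r$ while the count is only $e^{O(n^2)}$. The entropy gap comes from the removal lemma, Tur\'an stability for the support, the bound $h(p)\le\ln 2-2(p-\tfrac12)^2$, and $L^1$-continuity of entropy (Lemma~\ref{lem:near-gap-Kr}). The counting step is the Chatterjee--Varadhan upper bound on a cut-closed family. For the upper bound in (i) you reuse the same removal-lemma estimate on $\{t(K_r,\cdot)\le\delta\}$. The paper instead compares $\widetilde Z_{n,r}$ with a fixed-$L$ ERGM and invokes Chatterjee--Diaconis (Thms.~3.1 and 7.1). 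Your route is more self-contained and equally valid. Two minor points: $t(K_r,W_G)=r!\,K_r(G)/n^r$ holds exactly, and the numerator in your ratio should read $\exp(n^2[s_q-c'+o(1)])$.

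The genuine gap is in (iii). A measure-preserving bijection $\phi$ nearly attaining $\delta_\square(W_G,W_q^\star)$ has no reason to send each vertex interval into a single block. Pulling the blocks back only gives fractional memberships $p_{k,\alpha}=n\lambda(\phi^{-1}(I_k)\cap A_\alpha)$, not a partition of $[n]$. Likewise, the cut-norm tests you can actually apply are on $A_\alpha\times A_\beta$, which are not unions of vertex intervals. Your claim that ``rounding costs $O(1/n)$'' is not proved in the paper, and it is not a routine fact. Even for two graphs on $n$ vertices, comparing fractional with integral overlays is a nontrivial theorem of Borgs--Chayes--Lov\'asz--S\'os--Vesztergombi, and it loses a power of $\delta_\square$ rather than an additive $O(1/n)$. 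So (a)--(c) do not follow as written.

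The missing ingredient is a rounding argument. The paper's version (Lemma~\ref{lem:det-round-Bqstar}) colors each vertex $k$ independently with color $\alpha$ with probability $p_{k,\alpha}$. Because $a_{kk}=0$, the edge counts are multilinear in the rows, so $\mbbE\,e_G(V_\alpha,V_\beta)=\sum_{k,l}a_{kl}p_{k,\alpha}p_{l,\beta}=n^2\int_{A_\alpha\times A_\beta}W_G^\phi$ exactly, and $\mbbE\,2e_G(V_\alpha)=n^2\int_{A_\alpha\times A_\alpha}W_G^\phi$. The cut-norm bound with $S=A_\alpha$, $T=A_\beta$ then places these expectations within $\rho n^2$ of $n^2/(2q^2)$ and of $0$, respectively. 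Next:
\begin{enumerate}
\item Hoeffding keeps each $|V_\alpha|$ near its exact mean $n/q$.
\item Markov bounds $\sum_\alpha e_G(V_\alpha)$.
\item McDiarmid, with bounded differences $n$, concentrates each $e_G(V_\alpha,V_\beta)$.
\end{enumerate}
A union bound then produces a coloring with all required properties for $n\ge N(\varepsilon,q)$.
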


\subsection{Related work}
\label{sec:related-work}

In the introduction we mentioned several studies of graphs, or other relational structures, subject to ``hard'' constraints.
We also mentioned \cite{KoponenDomainSizeAsymptotics, TousinejadKoponen2026}
about structures subject to ``soft'' constraints defined by Markov logic networks.
As we use graphons to prove our main results
we like to mention \cite{AWR}
which considers Markov logic networks and a multi-relational analogue of a graphon in the context when 
one considers several different edge relations.
However, the results in \cite{AWR}, which generalize the basic theory of graphons, are not of help in the present context
as far as we can see.

In \cite{ChatterjeeDiaconis2013} Chatterjee and Diaconis consider ERGM distributions on $\mbG_n$ of the form
\[
\mbbP_n^T(G) = \frac{\exp\big(n^2 T(G)\big)}{\sum_{J \in \mbG_n} \exp\big(n^2 T(J)\big)}
\]
where $T$ is a function from $\bigcup_{n\in\mbbN}\mbG_n$ into the reals.
If we rewrite the distribution $\mbbP_{n, r}$ so that, for an appropriate choice of $T$ and all $n$, 
$\mbbP_{n, r} = \mbbP_n^T$
then, for all $n$ and all $G \in \mbG_n$, we must have 
\[
T(G) := \frac{ w N_r(G)}{n^2}.
\]
Recall that $N_r(G)$ is the number of $r$-tuples of vertices of $G$ that do not form an $r$-clique.
So if $G \in \mbG_n$ does not have an $r$-clique (e.g. if $G$ is $(r-1)$-partite),
then $N_r(G) = n^r$ where $r \geq 3$.
It follows that $\max_{G \in \mbG_n} T(G)$ is unbounded as $n\to\infty$.
But the results of \cite{ChatterjeeDiaconis2013} assume that 
$\max_{G \in \mbG_n} T(G)$ is bounded as $n\to\infty$.
Therefore their results do not apply to the present context.

\section{Graphon preliminaries}\label{sec:graphons}

\subsection{Basic definitions}

\begin{defin}[Graphon]\label{defin:graphon}
A \emph{graphon} is a measurable function
\[
W:[0,1]^2\to[0,1]
\]
such that $W(x,y)=W(y,x)$ for almost every $(x,y)\in[0,1]^2$. Let $\mc W$ denote the family of these measurable functions.
\end{defin}

Let $\lambda$ denote \emph{Lebesgue measure} on $[0,1]$. For each integer $m\ge 1$, let $\lambda^m$ denote the \emph{product Lebesgue measure} on $[0,1]^m$.

Graphons are the standard limit objects for dense graphs. They also define random graph models. Let $W$ be a graphon and let $n\in\mbbN$. Choose
$X_1,\dots,X_n$ independently and uniformly from $[0,1]$.
Conditional on $X_1,\dots,X_n$, put each edge $\{i,j\}$ with $i<j$ into the graph independently with probability
\[
W(X_i,X_j).
\]

\begin{defin}[Empirical graphon]\label{def:empirical-graphon}
Let $n\in\mbbN$, and let $G$ be a graph with vertex set $[n]$ and adjacency matrix
$(a_{ij})_{1\le i,j\le n}$, where
\[
a_{ij}=a_{ji}
\qquad\text{and}\qquad
a_{ii}=0.
\]
Split $[0,1]$ into $n$ intervals of equal length by setting
\[
I_i=\Bigl[\frac{i-1}{n},\frac{i}{n}\Bigr)
\quad\text{for }1\le i<n,
\qquad
I_n=\Bigl[\frac{n-1}{n},1\Bigr].
\]
The \emph{empirical graphon} of $G$ is the graphon
$W_G:[0,1]^2\to[0,1]$ defined as follows. If
$x\in I_i$ and $y\in I_j$, then
\[
W_G(x,y)\coloneqq a_{ij}.
\]
Thus $W_G$ records the adjacency matrix of $G$ on the $n^2$ rectangles
$I_i\times I_j$. It is measurable, symmetric, and takes only the values
$0$ and $1$. Also, $W_G=0$ on each rectangle $I_i\times I_i$, because
$a_{ii}=0$.
\end{defin}

\begin{defin}[Homomorphism density]\label{defin:hom-density}
Let $H$ be a finite simple graph with vertex set $V(H)=\{1,\dots,k\}$.
For a graphon $W$, the \emph{homomorphism density} of $H$ in $W$ is
\[
t(H,W)
\coloneqq
\int_{[0,1]^k}\prod_{\{i,j\}\in E(H)}W(x_i,x_j)\,dx_1\cdots dx_k.
\]
\end{defin}

\begin{rem}\label{rem:Kr-hom-density}
For $H=K_r$ this becomes
\[
t(K_r,W)
=
\int_{[0,1]^r}\prod_{1\le a<b\le r}W(x_a,x_b)\,dx_1\cdots dx_r.
\]
The integrand is nonnegative.
So if $t(K_r,W)=0$, then the product inside the integral must vanish almost everywhere.
\end{rem}

From now on, a graphon $W$ is called a \emph{graphon of zero $K_r$-density} if
\[
t(K_r,W)=0,
\]
that is, if its $K_r$-homomorphism density is zero.

\begin{defin}[Cut norm]\label{defin:cutnorm}
For an integrable function $U:[0,1]^2\to\mbbR$, the cut norm is
\[
\|U\|_\square
\coloneqq
\sup_{\substack{S,T\subseteq[0,1]\\ S,T\text{ measurable}}}
\left|\int_{S\times T}U(x,y)\,dx\,dy\right|.
\]
\end{defin}

\begin{defin}[Cut distance]\label{defin:cutdistance}
A measurable map $\phi:[0,1]\to[0,1]$ is \emph{measure-preserving} if
\[
\lambda(\phi^{-1}(A))=\lambda(A)
\qquad\text{for every Lebesgue-measurable }A\subseteq[0,1].
\]

A \emph{measure-preserving bijection} is a bijection
$\phi:[0,1]\to[0,1]$ such that both $\phi$ and $\phi^{-1}$ are
Lebesgue-measurable and $\phi$ is measure-preserving.

If $W$ is a graphon and $\phi$ is a measure-preserving map, define
\[
W^\phi(x,y)\coloneqq W(\phi(x),\phi(y)).
\]
Thus $W^\phi$ is obtained from $W$ by applying $\phi$ to both variables.
When $\phi$ is a measure-preserving bijection, we call $W^\phi$ a
\emph{relabeling} of $W$.

For graphons $W_1$ and $W_2$, their \emph{cut distance} is
\[
\delta_\square(W_1,W_2)
\coloneqq
\inf_\phi \|W_1-W_2^\phi\|_\square,
\]
where the infimum is taken over all measure-preserving bijections
$\phi:[0,1]\to[0,1]$.
\end{defin}

Relabeling by a measure-preserving bijection does not change the cut norm.
Indeed, let $F:[0,1]^2\to\mbbR$ be a measurable function with
\[
\int_{[0,1]^2}|F(x,y)|\,dx\,dy<\infty,
\]
and let $\phi$ be a measure-preserving bijection. Define
\[
F^\phi(x,y)\coloneqq F(\phi(x),\phi(y)).
\]
Then
\[
\|F^\phi\|_\square=\|F\|_\square .
\]
This follows by changing variables in the integral over $S\times T$. Since
$\phi$ is measure-preserving and bijective, the sets $\phi(S)$ and
$\phi(T)$ run through the same measurable subsets of $[0,1]$ as $S$ and
$T$. Thus the supremum in the cut norm is unchanged.

\begin{defin}\label{defin:cutdistance-family}
Let $W$ be a graphon, and let $\mc F$ be a nonempty family of graphons. The
\emph{cut distance from $W$ to $\mc F$} is
\[
\delta_\square(W,\mc F)
\coloneqq
\inf_{V\in\mc F}\delta_\square(W,V).
\]
\end{defin}

The cut distance $\delta_\square$ is a pseudometric on $\mc W$. Indeed, let
$U,V,W\in\mc W$. To see symmetry, let $\phi$ be a measure-preserving
bijection. By invariance of the cut norm under relabeling,
\[
\|U-V^\phi\|_\square
=
\|U^{\phi^{-1}}-V\|_\square
=
\|V-U^{\phi^{-1}}\|_\square.
\]
Taking the infimum over $\phi$ gives
$\delta_\square(U,V)=\delta_\square(V,U)$. For the triangle inequality, if
$\phi$ and $\psi$ are measure-preserving bijections, then
\begin{align*}
\|U-W^{\psi\circ\phi}\|_\square
&\le
\|U-V^\phi\|_\square
+
\|V^\phi-(W^\psi)^\phi\|_\square\\
&=
\|U-V^\phi\|_\square
+
\|V-W^\psi\|_\square.
\end{align*}
Taking the infimum over $\phi$ and $\psi$ proves the triangle inequality.
Nonnegativity and $\delta_\square(U,U)=0$ are immediate.

\begin{notation}[Reduced graphon space]\label{not:reduced-graphon-space}
Define
\[
W\sim V
\qquad\Longleftrightarrow\qquad
\delta_\square(W,V)=0,
\]
and let
\[
\widetilde{\mc W}\coloneqq \mc W/\sim,
\qquad
\pi:\mc W\to\widetilde{\mc W}
\]
be the quotient space and quotient map. We use $\widetilde W\coloneqq \pi(W)$ and, for $n\in\mbbN$ and $G\in\mbG_n$, $\widetilde W_G\coloneqq \pi(W_G)$. For reduced graphons define
\[
d_\square(\widetilde U,\widetilde V)
\coloneqq
\delta_\square(U,V),
\]
where $U$ and $V$ are arbitrary representatives. This definition does not depend on the chosen representatives. Indeed, if
$U'\sim U$ and $V'\sim V$, then
\[
\delta_\square(U',V')
\le
\delta_\square(U',U)+\delta_\square(U,V)+\delta_\square(V,V')
=
\delta_\square(U,V).
\]
The same argument with $(U,V)$ and $(U',V')$ interchanged gives
\[
\delta_\square(U,V)\le \delta_\square(U',V').
\]
Hence
\[
\delta_\square(U',V')=\delta_\square(U,V).
\]
Thus $d_\square$ is well defined on $\widetilde{\mc W}$. It is a metric because
nonnegativity, symmetry, and the triangle inequality are inherited from
$\delta_\square$. Also,
\[
d_\square(\widetilde U,\widetilde V)=0
\]
holds exactly when $U\sim V$, which means $\widetilde U=\widetilde V$. The metric space
$(\widetilde{\mc W},d_\square)$ is compact; see \cite[Thm.~9.23]{Lovasz2012}.
For $\mc F\subseteq\mc W$, we use
\[
\pi(\mc F)\coloneqq \{\pi(W):W\in\mc F\}\subseteq\widetilde{\mc W}
\]
for its quotient image.
\end{notation}

\begin{rem}[Zero cut distance]\label{rem:zero-cut-distance}
For graphons $U$ and $V$, the condition
\[
\delta_\square(U,V)=0
\]
means that $U$ and $V$ represent the same graph limit. Equivalently, there
exist measure-preserving maps
$\phi,\psi:[0,1]\to[0,1]$ such that
\[
U^\phi=V^\psi
\qquad\text{almost everywhere};
\]
see \cite[Cor.~10.34 and Cor.~10.35(a)]{Lovasz2012}.

The maps in this characterization need not be bijections. Consequently,
the condition $\delta_\square(U,V)=0$ is weaker than almost-everywhere
equality after a single measure-preserving relabeling.

By Notation~\ref{not:reduced-graphon-space}, graphons at zero cut distance
represent the same point of $\widetilde{\mc W}$. This quotient space is the
reduced graphon space used in
\cite[Sec.~1.3]{ChatterjeeVaradhan2011}.
\end{rem}

\begin{notation}\label{not:L1norm}
For an integrable function $U:[0,1]^2\to\mbbR$, the $L^1$-norm is defined as
\[
\|U\|_1\coloneqq \int_{[0,1]^2}|U(x,y)|\,dx\,dy.
\]
\end{notation}

\begin{notation}[Indicator functions]\label{not:indicator}
For any set or event $A$, $\mathbf{1}_A$ denotes its
indicator. Thus $\mathbf{1}_A(z)=1$ when $z\in A$ and $\mathbf{1}_A(z)=0$ otherwise.
\end{notation}

\begin{lem}\label{lem:cauchy-schwarz}
We will use the following two forms of the Cauchy-Schwarz inequality.

\smallskip
\noindent\textup{(i)} If $S\subseteq[0,1]^2$ is measurable and
$f:S\to\mbbR$ is measurable with
\[
\int_S f^2<\infty,
\]
then
\[
\left(\int_S |f|\right)^2
\le
\lambda^2(S)\int_S f^2.
\]

\smallskip
\noindent\textup{(ii)} If $m\ge1$ is an integer and
$x_1,\dots,x_m\in\mbbR$, then
\[
\left(\sum_{a=1}^m x_a\right)^2
\le
m\sum_{a=1}^m x_a^2.
\]
Equivalently,
\[
\sum_{a=1}^m x_a^2
\ge
\frac1m\left(\sum_{a=1}^m x_a\right)^2.
\]
Consequently, if $q\in\mbbN$, $n\in\mbbZ_{\ge0}$,
$m_1,\dots,m_q\in\mbbZ_{\ge0}$, and
\[
\sum_{a=1}^q m_a=n,
\]
then
\[
\sum_{a=1}^q \binom{m_a}{2}
=
\frac12\left(\sum_{a=1}^q m_a^2-n\right)
\ge
\frac12\left(\frac{n^2}{q}-n\right).
\]
\end{lem}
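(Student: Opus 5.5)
Both parts are elementary, so the plan is a direct verification rather than an appeal to any earlier result.

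For part~(i), I will apply the nonnegative-integrand form of the Cauchy--Schwarz inequality on $L^2(S)$. The pairing is $|f|$ with the constant function $\mathbf 1_S$. This gives
\[
\int_S |f| \;=\; \int_S |f|\cdot \mathbf 1_S \;\le\; \Bigl(\int_S f^2\Bigr)^{1/2}\Bigl(\int_S \mathbf 1_S\Bigr)^{1/2} \;=\; \Bigl(\int_S f^2\Bigr)^{1/2}\lambda^2(S)^{1/2}.
\]
Squaring yields the claim. The hypothesis $\int_S f^2<\infty$ ensures that every quantity is finite. Because $\lambda^2(S)\le 1$, the left side is finite as well, and $f$ is integrable on $S$.

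For part~(ii), the cleanest route is the discrete Cauchy--Schwarz inequality applied to the vectors $(x_1,\dots,x_m)$ and $(1,\dots,1)$. It gives $\bigl(\sum x_a\bigr)^2\le m\sum x_a^2$. An alternative is to expand
\[
0\le \sum_{a<b}(x_a-x_b)^2 \;=\; m\sum_a x_a^2-\Bigl(\sum_a x_a\Bigr)^2 .
\]
The equivalent form follows by dividing by $m\ge1$.

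For the consequence, I first use the identity $\binom{m_a}{2}=\tfrac12(m_a^2-m_a)$. Summing over $a$ and using $\sum_a m_a=n$ gives $\sum_a\binom{m_a}{2}=\tfrac12\bigl(\sum_a m_a^2-n\bigr)$. I then apply part~(ii) with $m=q$ and $x_a=m_a$, which gives $\sum_a m_a^2\ge n^2/q$. Substituting this bound completes the proof.

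I expect no genuine obstacle. The only points needing care are that $q\ge1$, which is needed to divide, and the measure-theoretic integrability remark in part~(i).
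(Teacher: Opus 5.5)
Your proposal is correct and matches the paper's proof step for step. You use Cauchy--Schwarz for $|f|$ and the constant $1$ on $S$ in (i), and the discrete inequality for $(x_1,\dots,x_m)$ and $(1,\dots,1)$ in (ii). For the consequence, you combine $\binom{m_a}{2}=\tfrac12(m_a^2-m_a)$ with (ii) at $m=q$, as the paper does, and your alternative identity $\sum_{a<b}(x_a-x_b)^2=m\sum_a x_a^2-\bigl(\sum_a x_a\bigr)^2$ is also valid.
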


\begin{proof}
For \textup{(i)}, apply \cite[Thm.~11.35]{Rudin1976} to the functions
$|f|$ and $1$ on $S$. We get
\[
\int_S |f|
\le
\left(\int_S f^2\right)^{1/2}
\left(\int_S 1\right)^{1/2}.
\]
Since
\[
\int_S 1=\lambda^2(S),
\]
it follows that
\[
\left(\int_S |f|\right)^2
\le
\lambda^2(S)\int_S f^2.
\]

For \textup{(ii)}, apply \cite[Thm.~1.35]{Rudin1976} to the two lists
\[
x_1,\dots,x_m
\qquad\text{and}\qquad
1,\dots,1.
\]
This gives
\[
\left(\sum_{a=1}^m x_a\right)^2
\le
\left(\sum_{a=1}^m x_a^2\right)
\left(\sum_{a=1}^m 1\right)
=
m\sum_{a=1}^m x_a^2.
\]
Dividing by $m$, we obtain
\[
\sum_{a=1}^m x_a^2
\ge
\frac1m\left(\sum_{a=1}^m x_a\right)^2.
\]

Now apply \textup{(ii)} with $m=q$ and $x_a=m_a$. Since
\[
\sum_{a=1}^q m_a=n,
\]
we have
\[
\sum_{a=1}^q m_a^2
\ge
\frac{n^2}{q}.
\]
Therefore
\[
\sum_{a=1}^q \binom{m_a}{2}
=
\frac12\sum_{a=1}^q m_a(m_a-1)
=
\frac12\left(\sum_{a=1}^q m_a^2-n\right)
\ge
\frac12\left(\frac{n^2}{q}-n\right).
\]

\end{proof}

\begin{lem}\label{lem:cutnorm-weighted}
Let $F:[0,1]^2\to\mbbR$ be bounded and measurable, and let
$f,g:[0,1]\to[0,1]$ be measurable. Then
\[
\left|\int_{[0,1]^2}F(x,y)\,f(x)\,g(y)\,dx\,dy\right|
\le \|F\|_\square.
\]
\end{lem}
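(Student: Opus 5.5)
The idea is to write $f$ and $g$ as averages of indicator functions of measurable sets, via the layer-cake representation, and then apply the definition of the cut norm (Definition~\ref{defin:cutnorm}) to each pair of sets. For $s\in[0,1]$ set $S_s\coloneqq\{x\in[0,1]: f(x)>s\}$, and for $t\in[0,1]$ set $T_t\coloneqq\{y\in[0,1]: g(y)>t\}$. These sets are measurable because $f$ and $g$ are measurable. Since $f$ takes values in $[0,1]$, we have
\[
f(x)=\int_0^1 \mathbf{1}_{S_s}(x)\,ds \quad\text{for every } x,
\]
and likewise $g(y)=\int_0^1 \mathbf{1}_{T_t}(y)\,dt$.

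Substituting these representations gives
\[
\int_{[0,1]^2}F(x,y)f(x)g(y)\,dx\,dy=\int_{[0,1]^2}F(x,y)\int_0^1\!\!\int_0^1 \mathbf{1}_{S_s}(x)\mathbf{1}_{T_t}(y)\,ds\,dt\,dx\,dy .
\]
Now interchange the order of integration by Fubini's theorem. This is allowed because $F$ is bounded and measurable, the indicators are bounded by $1$, and the whole integrand is jointly measurable in $(x,y,s,t)$, since $\{(x,s): f(x)>s\}$ is measurable. The interchange yields
\[
\int_0^1\!\!\int_0^1\left(\int_{S_s\times T_t}F(x,y)\,dx\,dy\right)ds\,dt .
\]

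For each fixed $(s,t)$, the inner integral has absolute value at most $\|F\|_\square$ by the definition of the cut norm. Taking absolute values inside the outer integral, and using that the square $[0,1]^2$ of parameters $(s,t)$ has measure $1$, gives the claim.

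The only step that needs care is the joint measurability that justifies Fubini. Here boundedness of $F$ gives integrability on the finite measure space $[0,1]^4$.
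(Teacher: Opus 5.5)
Your proof is correct, but it takes a different route from the paper. The paper does not argue directly. It cites the equivalent formulation of the cut norm in Janson~2013 (Eq.~(4.2) and Rem.~4.1), namely that $\|F\|_\square$ also equals the supremum of $\bigl|\int F(x,y)\phi(x)\psi(y)\,dx\,dy\bigr|$ over all measurable $\phi,\psi:[0,1]\to[0,1]$, and then takes $\phi=f$, $\psi=g$.

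You instead prove the direction of that equivalence that is actually needed. The layer-cake representation writes $f(x)g(y)$ as an average over $(s,t)\in[0,1]^2$ of products $\mathbf{1}_{S_s}(x)\mathbf{1}_{T_t}(y)$ of indicators of measurable level sets. Fubini then turns the integral into an average of quantities $\int_{S_s\times T_t}F$, each of which Definition~\ref{defin:cutnorm} bounds by $\|F\|_\square$. The joint measurability of $\{(x,s):f(x)>s\}$ and the integrability on $[0,1]^4$ are exactly the facts you need, and you identify them correctly.

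The paper's version is a one-line appeal to the literature. Yours is self-contained and shows why weights in $[0,1]$ are no stronger than indicators: $f\otimes g$ is an average of indicator products. Your argument also uses boundedness of $F$ only to get integrability, so it works verbatim for any $F\in L^1([0,1]^2)$, since the integrand on $[0,1]^4$ is dominated by $|F(x,y)|$.
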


\begin{proof}
By \cite[Eq.~(4.2) and Rem.~4.1]{Janson2013},
\[
\|F\|_\square
=
\sup_{\substack{\phi,\psi:[0,1]\to[0,1]\\
\phi,\psi\text{ measurable}}}
\left|
\int_{[0,1]^2}
F(x,y)\phi(x)\psi(y)\,dx\,dy
\right|.
\]
Taking $\phi=f$ and $\psi=g$ gives
\[
\left|
\int_{[0,1]^2}
F(x,y)f(x)g(y)\,dx\,dy
\right|
\le
\|F\|_\square.
\]
\end{proof}

\begin{lem}[Clique density is Lipschitz in cut distance]\label{lem:Kr-cut-lipschitz}
Let $r\ge 2$. For all graphons $U$ and $V$,
\[
|t(K_r,U)-t(K_r,V)|\le \binom{r}{2}\,\|U-V\|_\square.
\]
Consequently,
\[
|t(K_r,U)-t(K_r,V)|\le \binom{r}{2}\,\delta_\square(U,V).
\]
In particular, the map $W\mapsto t(K_r,W)$ is continuous with respect to cut distance.
\end{lem}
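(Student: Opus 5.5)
The plan is the standard telescoping (hybrid) argument, replacing the factors of the product one edge at a time. Label the edges of $K_r$ as $e_1,\dots,e_m$ with $m=\binom r2$, where $e_k=\{a_k,b_k\}$. For $k=0,\dots,m$, define the hybrid integrand
\[
P_k(x_1,\dots,x_r)\coloneqq \prod_{l\le k}U(x_{a_l},x_{b_l})\prod_{l>k}V(x_{a_l},x_{b_l}).
\]
Then $\int P_m=t(K_r,U)$ and $\int P_0=t(K_r,V)$, so
\[
t(K_r,U)-t(K_r,V)=\sum_{k=1}^m\int_{[0,1]^r}(P_k-P_{k-1}).
\]
It therefore suffices to show that each summand has absolute value at most $\|U-V\|_\square$.

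Fix $k$ and write $(a,b)=(a_k,b_k)$. The difference $P_k-P_{k-1}$ factors as $(U-V)(x_a,x_b)$ times the product of all the other edge factors. Each of those factors involves at most one of the variables $x_a,x_b$, because $K_r$ is simple and $e_k$ is the only edge joining $a$ and $b$.

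Now fix the remaining variables $z=(x_c)_{c\ne a,b}$. The product of factors containing $x_a$ but not $x_b$ is a measurable function $f_z(x_a)$ with values in $[0,1]$. Similarly, the product of factors containing $x_b$ but not $x_a$ is $g_z(x_b)\in[0,1]$. The factors involving neither variable contribute a constant $h(z)\in[0,1]$. By Fubini,
\[
\int (P_k-P_{k-1})=\int h(z)\left(\int_{[0,1]^2}(U-V)(x,y)f_z(x)g_z(y)\,dx\,dy\right)dz.
\]
Applying Lemma~\ref{lem:cutnorm-weighted} to the inner integral with $F=U-V$, which is bounded and measurable, bounds it by $\|U-V\|_\square$. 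Since $0\le h\le1$, the whole summand is bounded by $\|U-V\|_\square$. Summing over the $m$ edges gives the first inequality.

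The only step needing care is the measurability and Fubini justification for $f_z$ and $g_z$. This is routine because all the functions involved are bounded and measurable on a product of probability spaces. When $r=2$ there are no other factors, and the argument reduces directly to Lemma~\ref{lem:cutnorm-weighted} with $f=g=1$.

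For the consequence, note that $t(K_r,\cdot)$ is invariant under relabeling: if $\phi$ is a measure-preserving bijection, then $t(K_r,V^\phi)=t(K_r,V)$ by change of variables, exactly as was done for the cut norm. Hence for every such $\phi$,
\[
|t(K_r,U)-t(K_r,V)|=|t(K_r,U)-t(K_r,V^\phi)|\le\binom r2\|U-V^\phi\|_\square.
\]
Taking the infimum over $\phi$ gives the bound with $\delta_\square(U,V)$, and continuity with respect to cut distance follows immediately.
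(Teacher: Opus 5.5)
Your proposal is correct and follows the paper's proof essentially step for step. You telescope over the edges, factor each hybrid difference into $(U-V)(x_a,x_b)$ times a function of $x_a$ alone, a function of $x_b$ alone, and a constant, apply Lemma~\ref{lem:cutnorm-weighted} for each fixed choice of the remaining variables, and then use relabeling invariance of $t(K_r,\cdot)$ to pass to $\delta_\square$. The one difference is that the paper first replaces $U,V$ by Borel-measurable representatives, which makes every one-variable section measurable. That is a clean way to make your ``routine'' Fubini and measurability step fully rigorous.
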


\begin{proof}
By \cite[Rem.~2.1]{Janson2013}, every Lebesgue-measurable graphon on
$[0,1]^2$ is equal almost everywhere to a Borel-measurable one. Such a
replacement changes neither homomorphism densities nor cut norms. We may
therefore replace $U$ and $V$ by Borel-measurable representatives and
continue to denote them by $U$ and $V$. In particular, whenever all but one of the variables are fixed below, the
resulting one-variable function is measurable.

Let
\[
m\coloneqq \binom{r}{2},
\qquad
D\coloneqq U-V.
\]
Choose once and for all an ordering of the $\binom{r}{2}$ edges of $K_r$.
Equivalently, choose an ordering of the pairs
\[
(i_1,j_1),\dots,(i_m,j_m),
\qquad
1\le i_\ell<j_\ell\le r,
\]
so that every pair appears exactly once.

Vertex $s$ carries the single variable $x_s\in[0,1]$.

The symbols $i_\ell$ and $j_\ell$ record which two vertices form the $\ell$-th edge in the chosen list.

For example, when $r=4$, one possible ordering is
\[
\begin{aligned}
(i_1,j_1)&=(1,2), & (i_2,j_2)&=(1,3), & (i_3,j_3)&=(1,4),\\
(i_4,j_4)&=(2,3), & (i_5,j_5)&=(2,4), & (i_6,j_6)&=(3,4).
\end{aligned}
\]

By Remark~\ref{rem:Kr-hom-density},
\[
t(K_r,W)=\int_{[0,1]^r}\prod_{\ell=1}^{m}W(x_{i_\ell},x_{j_\ell})\,dx_1\cdots dx_r.
\]

This means that we choose numbers
\[
x_1,\dots,x_r\in[0,1],
\]
one number for each vertex of the clique.
For every edge $\{i,j\}$ of the clique, we include the factor $W(x_i,x_j)$.
Then we multiply all these factors together.
Finally, we average over all choices of $(x_1,\dots,x_r)\in[0,1]^r$.
Since the set $[0,1]^r$ has total measure $1$, the integral here is literally an average.

Applying this formula to $U$ and to $V$, we obtain
\[
t(K_r,U)-t(K_r,V)
=
\int_{[0,1]^r}
\left(
\prod_{\ell=1}^{m}U(x_{i_\ell},x_{j_\ell})
-
\prod_{\ell=1}^{m}V(x_{i_\ell},x_{j_\ell})
\right)
\,dx_1\cdots dx_r.
\]

To compare the two products, it is helpful to change the edge factors one at a time.
For a fixed point $(x_1,\dots,x_r)\in[0,1]^r$, use
\[
U_\ell\coloneqq U(x_{i_\ell},x_{j_\ell}),
\qquad
V_\ell\coloneqq V(x_{i_\ell},x_{j_\ell}).
\]
Now define
\[
P_0\coloneqq \prod_{\ell=1}^{m}V_\ell,
\]
and for $s=1,\dots,m$ define
\[
P_s\coloneqq
\left(\prod_{\ell=1}^{s}U_\ell\right)
\left(\prod_{\ell=s+1}^{m}V_\ell\right).
\]
Thus $P_0$ is the product built entirely from $V$, while $P_m$ is the product built entirely from $U$.

Therefore
\[
\prod_{\ell=1}^{m}U_\ell-\prod_{\ell=1}^{m}V_\ell
=
P_m-P_0
=
\sum_{s=1}^{m}(P_s-P_{s-1}).
\]
For each $s$, only the $s$-th factor changes when one passes from $P_{s-1}$ to $P_s$, so
\[
P_s-P_{s-1}
=
\left(\prod_{\ell<s}U_\ell\right)(U_s-V_s)\left(\prod_{\ell>s}V_\ell\right).
\]
Substituting back the definitions of $U_\ell$ and $V_\ell$, we get
\begin{align*}
\prod_{\ell=1}^{m}U(x_{i_\ell},x_{j_\ell})
-
\prod_{\ell=1}^{m}V(x_{i_\ell},x_{j_\ell})
&=
\sum_{s=1}^{m}
\left(\prod_{\ell<s}U(x_{i_\ell},x_{j_\ell})\right)
D(x_{i_s},x_{j_s})
\left(\prod_{\ell>s}V(x_{i_\ell},x_{j_\ell})\right).
\end{align*}
If $s=1$ or $s=m$, one of the products is empty; by convention, an empty product is equal to $1$.

Substituting this into the integral and using the triangle inequality, we obtain
\[
|t(K_r,U)-t(K_r,V)|\le \sum_{s=1}^{m} I_s,
\]
where
\begin{align}
I_s
&\coloneqq
\left|
\int_{[0,1]^r}
\left(\prod_{\ell<s}U(x_{i_\ell},x_{j_\ell})\right)
D(x_{i_s},x_{j_s})
\left(\prod_{\ell>s}V(x_{i_\ell},x_{j_\ell})\right)
\,dx_1\cdots dx_r
\right|.
\label{eq:Is-Kr-cut}
\end{align}

We now estimate one such term.

Fix $s\in\{1,\dots,m\}$ and use
\[
(i_s,j_s)=(a,b).
\]
Let
\[
\widehat{x}=(x_t)_{t\in[r]\setminus\{a,b\}},
\qquad
d\widehat{x}\coloneqq
\prod_{t\in[r]\setminus\{a,b\}}dx_t,
\]
where the coordinates are taken in increasing order. Thus $\widehat{x}$
denotes all variables except $x_a$ and $x_b$. When $r=2$,
$[0,1]^0$ is interpreted as a one-point probability space, so integration
with respect to $d\widehat{x}$ means evaluation at that point.

Because every graphon takes values in $[0,1]$, the integrand in \eqref{eq:Is-Kr-cut} is measurable and bounded in absolute value by $1$.
Thus it is integrable on $[0,1]^r$.
We can therefore write the integral by first averaging over the two variables
$x_a$ and $x_b$, and then averaging over the remaining $r-2$ variables, which we collect in $\widehat{x}$.

The distinguished edge $\{a,b\}$ contributes the factor
\[
D(x_a,x_b).
\]

Every other edge of $K_r$ falls into exactly one of the following three types.

An edge of the form $\{a,c\}$ with $c\notin\{a,b\}$ contributes a factor
that depends on $x_a$ but not on $x_b$.

An edge of the form $\{b,c\}$ with $c\notin\{a,b\}$ contributes a factor
that depends on $x_b$ but not on $x_a$.

An edge of the form $\{c,d\}$ with $c,d\notin\{a,b\}$ contributes a factor
that depends on neither $x_a$ nor $x_b$; once $\widehat{x}$ is fixed, it is
constant.

Because $K_r$ is a simple graph, there is only one edge joining $a$ and $b$.
So the factor
\[
D(x_a,x_b)
\]
is the only factor that depends on both variables at the same time.

Figure~\ref{fig:Kr-cut-lipschitz-picture} illustrates this factorization in the case $K_4$.

\begin{figure}[htbp]
\centering
\includegraphics[width=0.75\linewidth]{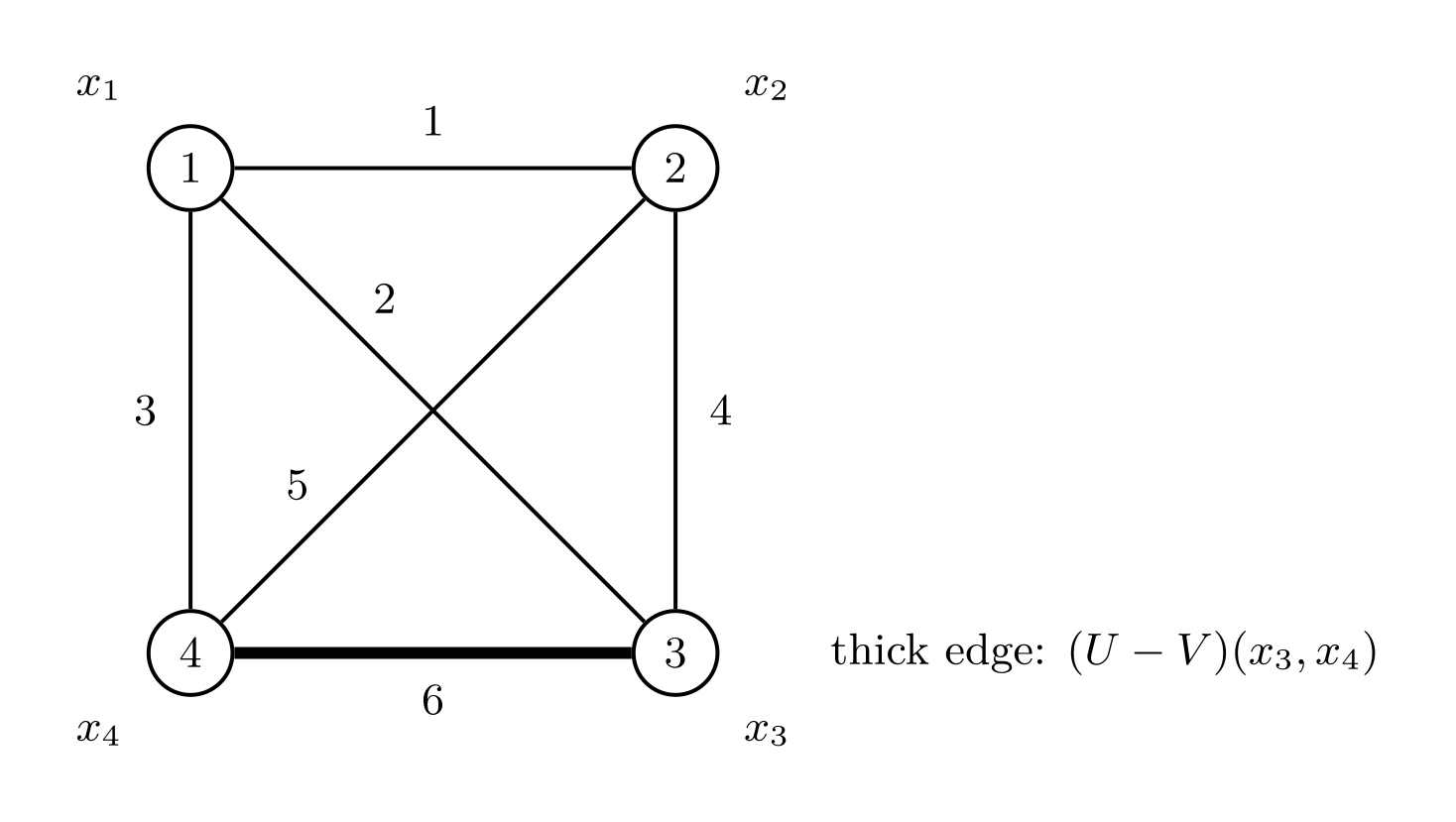}
\caption{In the integral defining $t(K_4,W)$, vertex $i$ carries the variable $x_i$, and the edge $\{i,j\}$ contributes the factor $W(x_i,x_j)$. If the distinguished edge is $\{3,4\}$, then that edge contributes the factor $(U-V)(x_3,x_4)$. Once $x_1$ and $x_2$ are fixed, the remaining factors split into a function of $x_3$, a function of $x_4$, and a constant.}
\label{fig:Kr-cut-lipschitz-picture}
\end{figure}

Therefore, for each fixed choice of $\widehat{x}$, the whole integrand in
\eqref{eq:Is-Kr-cut} can be written in the form
\[
C(\widehat{x})\,D(x_a,x_b)\,A_{\widehat{x}}(x_a)\,B_{\widehat{x}}(x_b),
\]
where $A_{\widehat{x}},B_{\widehat{x}}:[0,1]\to[0,1]$ are measurable and
$C(\widehat{x})\in[0,1]$. More concretely, $A_{\widehat{x}}$ is the product
of all remaining factors coming from edges that touch $a$ but not $b$,
$B_{\widehat{x}}$ is the product of all remaining factors coming from edges
that touch $b$ but not $a$, and $C(\widehat{x})$ is the product of all
remaining factors coming from edges that touch neither $a$ nor $b$. Empty
products are interpreted as $1$. These functions take values in $[0,1]$
because every graphon factor does.

Using this factorization and averaging first in $x_a$ and $x_b$, we get
\begin{equation}\label{eq:Is-fubini-cut-bound}
\begin{aligned}
I_s
&=
\left|
\int_{[0,1]^{r-2}}
\int_{[0,1]^2}
C(\widehat{x})\,D(x_a,x_b)\,A_{\widehat{x}}(x_a)\,B_{\widehat{x}}(x_b)
\,dx_a\,dx_b\,d\widehat{x}
\right| \\
&\le
\int_{[0,1]^{r-2}}
\left|
\int_{[0,1]^2}
D(x_a,x_b)\,\bigl(C(\widehat{x})A_{\widehat{x}}(x_a)\bigr)\,B_{\widehat{x}}(x_b)
\,dx_a\,dx_b
\right|
\,d\widehat{x}.
\end{aligned}
\end{equation}

For each fixed $\widehat{x}$, define
\[
f_{\widehat{x}}(x_a)\coloneqq C(\widehat{x})A_{\widehat{x}}(x_a),
\qquad
g_{\widehat{x}}(x_b)\coloneqq B_{\widehat{x}}(x_b).
\]
Then $f_{\widehat{x}},g_{\widehat{x}}:[0,1]\to[0,1]$ are measurable, so
Lemma~\ref{lem:cutnorm-weighted} applies and gives
\[
\left|
\int_{[0,1]^2}
D(x_a,x_b)\,f_{\widehat{x}}(x_a)\,g_{\widehat{x}}(x_b)
\,dx_a\,dx_b
\right|
\le
\|D\|_\square.
\]
Substituting this bound into \eqref{eq:Is-fubini-cut-bound} yields
\[
I_s
\le
\int_{[0,1]^{r-2}}\|D\|_\square\,d\widehat{x}
=
\|D\|_\square,
\]
because the set $[0,1]^{r-2}$ has total measure $1$.

This bound is the same for every $s\in[m]$.
Hence
\[
|t(K_r,U)-t(K_r,V)|
\le
\sum_{s=1}^{m} I_s
\le
\sum_{s=1}^{m}\|D\|_\square
=
m\,\|D\|_\square
=
\binom{r}{2}\,\|U-V\|_\square.
\]

This proves the first inequality.

We now pass from cut norm to cut distance.
Let $\phi$ be any measure-preserving bijection of $[0,1]$.
By Definition~\ref{defin:cutdistance},
\[
V^\phi(x,y)=V(\phi(x),\phi(y)).
\]
Define
\[
\Phi:[0,1]^r\to[0,1]^r,
\qquad
\Phi(x_1,\dots,x_r)\coloneqq (\phi(x_1),\dots,\phi(x_r)).
\]
Since $\phi$ preserves Lebesgue measure on $[0,1]$, the map $\Phi$ preserves the product measure on $[0,1]^r$.

Therefore
\[
t(K_r,V^\phi)=t(K_r,V).
\]
Applying the inequality already proved to $U$ and $V^\phi$, we obtain
\[
|t(K_r,U)-t(K_r,V)|
=
|t(K_r,U)-t(K_r,V^\phi)|
\le
\binom{r}{2}\,\|U-V^\phi\|_\square.
\]
Now take the infimum over all measure-preserving bijections $\phi$.
By the definition of $\delta_\square$,
\[
|t(K_r,U)-t(K_r,V)|
\le
\binom{r}{2}\,\delta_\square(U,V).
\]

Thus $W\mapsto t(K_r,W)$ is Lipschitz with respect to cut distance, and in particular it is continuous.
\end{proof}

\subsection{Entropy}

Entropy tells us, on the logarithmic scale, how many ways there are to choose edges and non-edges so that the edge density has a fixed value. The relevant one-variable function is the binary entropy
\[
h(p)\coloneqq -p\ln p-(1-p)\ln(1-p),\qquad p\in[0,1],
\]
with the convention $0\ln 0\coloneqq 0$.
If $pN$ is an integer, then the number of $0$--$1$ strings of length $N$ with exactly $pN$ ones is
$\binom{N}{pN}$, and for large $N$ the logarithm of this number has leading term $Nh(p)$. Thus
$h(p)$ measures the entropy contribution per possible edge when the edge density is $p$. In
particular,
\[
h(0)=h(1)=0,
\qquad
0\le h(p)\le \ln 2,
\qquad
h\Bigl(\frac12\Bigr)=\ln 2.
\]
So the entropy is zero when the presence or absence of an edge is completely determined, and it is
largest at $p=\tfrac12$, where the two possibilities are most evenly balanced.

\begin{defin}[Graphon entropy]\label{def:graphon-entropy}
For a graphon $W$, define

\begin{equation}\label{eq:entropy}
\Ent(W)\coloneqq
\frac12\int_{[0,1]^2} h\bigl(W(x,y)\bigr)\,dx\,dy.
\end{equation}

The factor $\frac12$ is needed because the square $[0,1]^2$ counts both $(x,y)$ and $(y,x)$,
while an edge is an unordered pair.
\end{defin}

\begin{lem}\label{lem:entropy-quadratic}
For every $p\in[0,1]$,
\begin{equation}\label{eq:entropy-quadratic}
h(p)\le \ln 2-2\Bigl(p-\frac12\Bigr)^2.
\end{equation}
\end{lem}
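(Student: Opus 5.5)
We will prove the inequality by a one-variable calculus argument: set $g(p)\coloneqq \ln 2-2(p-\tfrac12)^2-h(p)$ on $[0,1]$ and show $g\ge 0$. The natural substitution is $p=\tfrac12+t$ with $t\in[-\tfrac12,\tfrac12]$. Then
\[
h\bigl(\tfrac12+t\bigr)=\ln 2-\tfrac12\bigl[(1+2t)\ln(1+2t)+(1-2t)\ln(1-2t)\bigr],
\]
and the claim becomes
\[
\psi(u)\coloneqq (1+u)\ln(1+u)+(1-u)\ln(1-u)\ge u^2
\qquad\text{for } u=2t\in[-1,1].
\]

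We handle the open interval $(-1,1)$ first. There $\psi$ is smooth and even, with $\psi(0)=0$ and $\psi'(u)=\ln(1+u)-\ln(1-u)$, so $\psi'(0)=0$. Moreover
\[
\psi''(u)=\frac{1}{1+u}+\frac{1}{1-u}=\frac{2}{1-u^2}\ge 2.
\]
Hence $\phi(u)\coloneqq\psi(u)-u^2$ satisfies $\phi(0)=\phi'(0)=0$ and $\phi''\ge 0$. So $\phi$ is convex with a critical point at $0$, and therefore $\phi\ge 0$ on $(-1,1)$. Equivalently, one can write $\phi(u)=\int_0^u (u-s)\bigl(\psi''(s)-2\bigr)\,ds\ge 0$.

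For the endpoints $u=\pm1$, which correspond to $p\in\{0,1\}$, we check directly. With the convention $0\ln 0=0$, we have $h(0)=h(1)=0$, and the right-hand side of \eqref{eq:entropy-quadratic} equals $\ln 2-\tfrac12$, which is positive since $\ln 2\approx 0.693$. Alternatively, continuity of both sides on $[0,1]$ extends the inequality from the interior.

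The only delicate point is the endpoint and the $0\ln 0$ convention, which the direct check above settles. As a cross-check, the inequality is the Pinsker-type bound $\ln 2-h(p)=D(\mathrm{Ber}(p)\,\|\,\mathrm{Ber}(\tfrac12))\ge 2(p-\tfrac12)^2$. We will present the elementary calculus argument, however, since it uses nothing beyond the definition of $h$.
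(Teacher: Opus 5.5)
Your proof is correct and is essentially the paper's argument. The paper also shows that $\ln 2-h(p)-2(p-\tfrac12)^2$ is convex on $(0,1)$ via $\frac{1}{p(1-p)}-4\ge 0$, which is equivalent to your bound $\frac{2}{1-u^2}\ge 2$ under $u=2p-1$, and it concludes from the vanishing value and derivative at $p=\tfrac12$; the only differences are your symmetric reparametrization and your direct check at $p\in\{0,1\}$, where the paper uses continuous extension to $[0,1]$.
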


\begin{proof}
Set
\[
\psi(p)\coloneqq \ln 2-h(p)-2\Bigl(p-\frac12\Bigr)^2.
\]
For $p\in(0,1)$,
\[
h'(p)=\ln\frac{1-p}{p},
\]
so
\[
\psi'(p)
=
\ln\frac{p}{1-p}-4\Bigl(p-\frac12\Bigr).
\]
Also,
\[
\psi''(p)
=
\frac{1}{p(1-p)}-4
\ge
0,
\]
because $p(1-p)\le1/4$. Thus $\psi$ is convex on $(0,1)$ and extends continuously
to $[0,1]$. Since
\[
\psi\Bigl(\frac12\Bigr)=0,
\qquad
\psi'\Bigl(\frac12\Bigr)=0,
\]
the point $p=1/2$ is a global minimizer. Hence $\psi(p)\ge0$ for every $p\in[0,1]$,
which gives \eqref{eq:entropy-quadratic}.
\end{proof}

\begin{lem}\label{lem:entropy-zero-cut}
If $U,V\in\mc W$ satisfy $\delta_\square(U,V)=0$, then
\[
\Ent(U)=\Ent(V).
\]
Consequently,
\[
\Ent(\widetilde W)\coloneqq \Ent(W),
\qquad \widetilde W=\pi(W),
\]
is a well-defined function on $\widetilde{\mc W}$.
\end{lem}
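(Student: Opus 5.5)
We will use the characterization recalled in Remark~\ref{rem:zero-cut-distance}: if $\delta_\square(U,V)=0$, there are measure-preserving maps $\phi,\psi:[0,1]\to[0,1]$, not necessarily bijective, with $U^\phi=V^\psi$ almost everywhere. The plan is to show that entropy is invariant under pulling back by an arbitrary measure-preserving map. Once we have
\[
\Ent(W^\phi)=\Ent(W)
\]
for every graphon $W$ and every measure-preserving $\phi$, the chain
\[
\Ent(U)=\Ent(U^\phi)=\Ent(V^\psi)=\Ent(V)
\]
follows. The middle equality holds because functions that agree almost everywhere have equal integrals of $h\circ(\cdot)$.

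For the invariance, set $\Phi(x,y)=(\phi(x),\phi(y))$. This map is measure-preserving from $[0,1]^2$ to $[0,1]^2$ with product Lebesgue measure. To see this, first check it on measurable rectangles $A\times B$:
\[
\lambda^2\bigl(\Phi^{-1}(A\times B)\bigr)=\lambda(\phi^{-1}A)\,\lambda(\phi^{-1}B)=\lambda(A)\lambda(B).
\]
Rectangles form a $\pi$-system generating the Borel $\sigma$-algebra, so the two measures $\lambda^2\circ\Phi^{-1}$ and $\lambda^2$ agree on all Borel sets by the $\pi$--$\lambda$ theorem.

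As in the proof of Lemma~\ref{lem:Kr-cut-lipschitz}, we first replace $W$ by a Borel representative (via \cite[Rem.~2.1]{Janson2013}), so that $h\circ W$ is a bounded Borel function on $[0,1]^2$. The change-of-variables formula for image measures then gives
\[
\int_{[0,1]^2} h\bigl(W(\Phi(x,y))\bigr)\,dx\,dy=\int_{[0,1]^2} h\bigl(W(u,v)\bigr)\,du\,dv .
\]
This yields $\Ent(W^\phi)=\Ent(W)$. Here $h$ is continuous on $[0,1]$, hence Borel and bounded by $\ln 2$, so integrability is automatic.

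The main technical point is measurability. Lebesgue-measurable graphons composed with $\Phi$ need not be measurable, which is why we pass to Borel representatives. We must also make sure that this replacement is compatible with the almost-everywhere equality $U^\phi=V^\psi$. If $U=U'$ almost everywhere, then $U^\phi=U'^\phi$ almost everywhere, because $\Phi$ pulls back null Borel sets to null sets; every Lebesgue null set is contained in a Borel null set, so this suffices. Hence the equality persists after the replacement.

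Well-definedness on $\widetilde{\mc W}$ is then immediate: any two representatives of $\widetilde W$ are at cut distance $0$ and so have the same entropy.
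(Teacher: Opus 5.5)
Your proposal is correct and is essentially the paper's proof: both use the characterization in Remark~\ref{rem:zero-cut-distance} to get $U^\phi=V^\psi$ almost everywhere, then use that $\phi\times\phi$ and $\psi\times\psi$ preserve $\lambda^2$ to conclude $\Ent(U)=\Ent(U^\phi)=\Ent(V^\psi)=\Ent(V)$. You also supply measure-theoretic details that the paper leaves implicit, namely the $\pi$--$\lambda$ argument for the product map and the passage to Borel representatives so that the compositions are measurable.
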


\begin{proof}
By Remark~\ref{rem:zero-cut-distance}, there are measure-preserving maps
$\phi,\psi:[0,1]\to[0,1]$ such that $U^\phi=V^\psi$ almost everywhere. The product maps
$\phi\times\phi$ and $\psi\times\psi$ preserve $\lambda^2$, hence
\[
\Ent(U)=\Ent(U^\phi)=\Ent(V^\psi)=\Ent(V).
\]
\end{proof}

Graphon entropy controls the number of finite graphs with prescribed
large-scale structure. More precisely, for a cut-closed family $\mc F$, the
$n^2$-scale upper bound is governed by
$\sup_{W\in\mc F}\Ent(W)$, rather than by the entropy of an arbitrarily
chosen member of $\mc F$. In Section~\ref{sec:free-energy},
Lemma~\ref{lem:entropy-counting} uses such an entropy supremum to bound the
number of graphs whose empirical graphons lie in a given family. We
combine this result with Lemma~\ref{lem:near-gap-Kr} in the proof of
Theorem~\ref{thm:main-clique}\textup{(ii)} to show that the probability of
staying a fixed cut distance away from $B_q^\star$ decreases exponentially
in $n^2$. Lemma~\ref{lem:boundary-max-Kr} shows that, among graphons with
zero $K_r$-density, entropy is maximized exactly by the graphons at zero cut
distance from $B_q^\star$.

\begin{lem}\label{lem:empirical-Kr-density}
For every $n\in\mbbN$, every graph $G$ with vertex set $[n]$, and every integer $r\ge2$,
\[
t(K_r,W_G)=\frac{r!\,K_r(G)}{n^r}.
\]
\end{lem}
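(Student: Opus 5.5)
The identity will come from computing $t(K_r,W_G)$ directly from Definition~\ref{defin:hom-density} and Definition~\ref{def:empirical-graphon}. Partition $[0,1]^r$ into the $n^r$ boxes $I_{v_1}\times\cdots\times I_{v_r}$, one for each ordered tuple $(v_1,\dots,v_r)\in[n]^r$ (with repetitions allowed). Each box has measure $n^{-r}$. On such a box, $W_G(x_a,x_b)=a_{v_av_b}$ for all $a<b$, so the integrand is the constant $\prod_{1\le a<b\le r}a_{v_av_b}$. This gives
\[
t(K_r,W_G)=\frac{1}{n^r}\sum_{(v_1,\dots,v_r)\in[n]^r}\ \prod_{1\le a<b\le r}a_{v_av_b}.
\]
The boundary conventions for $I_n$ and the measure-zero overlaps play no role, since the $I_i$ are disjoint and cover $[0,1]$.

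Next I identify which tuples contribute. If $v_a=v_b$ for some $a\neq b$, the factor $a_{v_av_b}=a_{v_av_a}=0$ kills the product; this is exactly the observation that $W_G$ vanishes on the diagonal blocks. If the $v_i$ are pairwise distinct, the product equals $1$ precisely when every pair $\{v_a,v_b\}$ is an edge of $G$, that is, when $\{v_1,\dots,v_r\}$ spans a copy of $K_r$, and it equals $0$ otherwise. The sum therefore counts the ordered $r$-tuples of distinct vertices forming a clique.

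Each unordered copy of $K_r$ corresponds to exactly $r!$ such ordered tuples, namely its orderings, and distinct copies give disjoint sets of tuples. Hence the sum equals $r!\,K_r(G)$, and the formula follows.

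There is no real obstacle here. The only point needing care is the first step, the reduction of the integral to a finite sum via the block decomposition, and in particular correctly handling the zero diagonal so that tuples with repeated vertices contribute nothing.
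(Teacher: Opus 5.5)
Your proposal is correct and follows essentially the same route as the paper's proof. Both decompose $[0,1]^r$ into the boxes $I_{v_1}\times\cdots\times I_{v_r}$ to reduce $t(K_r,W_G)$ to $n^{-r}$ times a sum of adjacency products. Both then use $a_{ii}=0$ to discard tuples with repeated vertices, and count $r!$ orderings per unordered copy of $K_r$.
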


\begin{proof}
Let $(a_{ij})_{1\le i,j\le n}$ be the adjacency matrix of $G$, and let
$I_1,\dots,I_n$ be the intervals from
Definition~\ref{def:empirical-graphon}. Splitting $[0,1]^r$ into the boxes
\[
I_{v_1}\times\cdots\times I_{v_r},
\qquad
v_1,\dots,v_r\in[n],
\]
gives
\begin{equation}\label{eq:empirical-Kr-adjacency-expansion}
t(K_r,W_G)
=
\frac1{n^r}
\sum_{v_1,\dots,v_r\in[n]}
\prod_{1\le s<t\le r}a_{v_s,v_t}.
\end{equation}

Since every entry of the adjacency matrix is either $0$ or $1$, the
summand corresponding to $(v_1,\dots,v_r)$ is equal to $1$ exactly when
the vertices $v_1,\dots,v_r$ are pairwise distinct and form a copy of
$K_r$ in $G$. Indeed, if $v_s=v_t$ for some $s<t$, then the product
contains the factor
\[
a_{v_s,v_t}=a_{v_s,v_s}=0,
\]
because $G$ has no loops.

Thus the sum in \eqref{eq:empirical-Kr-adjacency-expansion} counts the
ordered $r$-tuples obtained by ordering the vertices of copies of $K_r$
in $G$. Every unordered copy has exactly $r!$ such orderings, and therefore
\begin{equation}\label{eq:empirical-Kr-ordered-count}
\sum_{v_1,\dots,v_r\in[n]}
\prod_{1\le s<t\le r}a_{v_s,v_t}
=
r!\,K_r(G).
\end{equation}
Combining \eqref{eq:empirical-Kr-adjacency-expansion} and
\eqref{eq:empirical-Kr-ordered-count} gives
\[
t(K_r,W_G)=\frac{r!\,K_r(G)}{n^r},
\]
as required.
\end{proof}

\begin{rem}
\label{rem:empirical-Kr-normalization}
For $n\ge r$, Lemma~\ref{lem:empirical-Kr-density} gives
\[
t(K_r,W_G)
=
\frac{n(n-1)\cdots(n-r+1)}{n^r}\,
\frac{K_r(G)}{\binom nr}.
\]
Thus the graphon homomorphism-density normalization differs from the usual clique density
based on unordered copies by the factor
\[
\frac{n(n-1)\cdots(n-r+1)}{n^r}.
\]
For fixed $r$,
\[
\lim_{n\to\infty}
\frac{n(n-1)\cdots(n-r+1)}{n^r}
=
1.
\]
\end{rem}

\section{Entropy maximizers and stability}\label{sec:entropy-stability}

\noindent
In this section we define a family of ``balanced $q$-partite graphons'' (recall $q := r-1$) and prove results
which relate the distance of a graphon $W$ to a balanced $q$-partite graphon to the homomorphism density of
$K_r$ in $W$ and the entropy of $W$.
In particular, we show that the balanced $q$-partite graphons are the graphons that maximize entropy among the
graphons with $K_r$-homomorphism density 0.
This will be used in the proof of Theorem~\ref{thm:main-clique}
in Section~\ref{sec:free-energy}.

\subsection{The family \texorpdfstring{$B_q^\star$}{Bq*} and the balanced Tur\'an graphons}

\begin{defin}\label{def:Bqstar}
Let $q\ge 2$.
The family $B_q^\star$ consists of all graphons $W:[0,1]^2\to[0,1]$ for which there exists a measurable
partition
\[
[0,1]=A_1\sqcup\cdots\sqcup A_q,
\qquad
\lambda(A_i)=\frac1q
\quad\text{for every }i\in[q],
\]
such that
\[
W(x,y)=0
\quad\text{for almost every\ }(x,y)\in A_i\times A_i
\qquad\text{for every }i,
\]
and
\[
W(x,y)=\tfrac12
\quad\text{for almost every\ }(x,y)\in A_i\times A_j
\qquad\text{for all distinct }i,j\in[q].
\]
Equivalently, the values of $W$ on the $q^2$ block products are given by the
following block-value matrix:
\[
\begin{bmatrix}
0 & \tfrac12 & \cdots & \tfrac12\\
\tfrac12 & 0 & \cdots & \tfrac12\\
\vdots & \vdots & \ddots & \vdots\\
\tfrac12 & \tfrac12 & \cdots & 0
\end{bmatrix}
\]
with blocks of equal size.
\end{defin}

So $B_q^\star$ is the graphon analogue of the family of balanced complete $q$-partite graphs, except that the
off-diagonal blocks have value $\tfrac12$ instead of $1$.

\begin{notation}[Balanced complete $q$-partite graphons]\label{not:Uqstar}
Define
\[
\mc U_q^\star
\coloneqq
\left\{
\mathbf{1}_{\displaystyle\bigcup_{\substack{1\le i,j\le q\\ i\neq j}}A_i\times A_j}:\;
\begin{array}{l}
A_1,\dots,A_q\subseteq[0,1]\text{ are measurable},\\
{}[0,1]=A_1\sqcup\cdots\sqcup A_q,\\
\lambda(A_i)=\frac1q\text{ for every }i\in[q]
\end{array}
\right\}.
\]
These are the balanced complete $q$-partite $\{0,1\}$-valued graphons. By construction,
\[
B_q^\star
=
\left\{
W\in\mc W:\;
W=\frac12\,U\ \text{almost everywhere for some }U\in\mc U_q^\star
\right\}.
\]
\end{notation}

\begin{notation}[Canonical representative of \texorpdfstring{$B_q^\star$}{Bq*}]
\label{not:Wqstar}
We fix one balanced partition of $[0,1]$ by setting
\[
A_a\coloneqq \Bigl[\frac{a-1}{q},\frac{a}{q}\Bigr)
\quad\text{for }a=1,\dots,q-1,
\qquad
A_q\coloneqq \Bigl[\frac{q-1}{q},1\Bigr].
\]
Thus
\[
[0,1]=A_1\sqcup\cdots\sqcup A_q,
\qquad
\lambda(A_a)=\frac1q
\quad\text{for every }a\in[q].
\]

Define the canonical element $W_q^\star\in B_q^\star$ by
\[
W_q^\star(x,y)
\coloneqq
\begin{cases}
0, & \text{if }x,y\in A_a\text{ for some }a\in[q],\\[1mm]
\frac12, & \text{if }x\in A_a,\ y\in A_b
\text{ for some distinct }a,b\in[q].
\end{cases}
\]
Equivalently,
\[
W_q^\star
=
\frac12\,
\mathbf{1}_{\displaystyle
\bigcup_{\substack{1\le a,b\le q\\ a\neq b}}A_a\times A_b}.
\]

Every graphon in $B_q^\star$ is equal almost everywhere to a relabeling
of $W_q^\star$. Indeed, suppose that $V\in B_q^\star$ corresponds to a
balanced partition $[0,1]=B_1\sqcup\cdots\sqcup B_q$. For each $i\in[q]$, both $B_i$ and $A_i$ are measurable subsets of
$[0,1]$ with Lebesgue measure $1/q$. Equip these sets with their normalized
restricted Lebesgue measures. By \cite[Cor.~A.11]{Janson2013}, there is a
measure-preserving bijection
\[
\phi_i:B_i\to A_i.
\]
Since the same normalization factor $q$ is used on both sets, $\phi_i$ also
preserves the original Lebesgue measure. Pasting
$\phi_1,\dots,\phi_q$ gives a measure-preserving bijection
$\phi:[0,1]\to[0,1]$ such that
\[
\phi(B_i)=A_i
\quad\text{up to null sets for every }i\in[q].
\]

Therefore
\[
V=(W_q^\star)^\phi
\qquad\text{almost everywhere.}
\]
In particular,
\[
\delta_\square(V,W_q^\star)=0
\qquad\text{for every }V\in B_q^\star.
\]

Define
\[
\widetilde W_q^\star\coloneqq \pi(W_q^\star),
\qquad
\widetilde B_q^\star\coloneqq \pi(B_q^\star).
\]
Then
\[
\widetilde B_q^\star=\{\widetilde W_q^\star\}.
\]
Consequently, for every graphon $W$,
\[
\delta_\square(W,B_q^\star)=\delta_\square(W,W_q^\star).
\]
\end{notation}

\begin{lem}\label{lem:graphon-turan}
Fix $r\ge 3$ and set $q=r-1$.
Let $U$ be a $\{0,1\}$-valued graphon and assume
\[
t(K_r,U)=0.
\]
Then
\[
\int_{[0,1]^2}U(x,y)\,dx\,dy\le 1-\frac1q.
\]
Moreover, equality holds if and only if
\[
\delta_\square(U,\mc U_q^\star)=0.
\]
\end{lem}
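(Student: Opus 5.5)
Both parts of Lemma~\ref{lem:graphon-turan} are graphon versions of Tur\'an's theorem and its uniqueness statement. The plan is to pass from $U$ to finite graphs by sampling, apply the finite theorem, and return to the limit. For the uniqueness part it is cleaner to argue directly on a twin-free or weak-isomorphism representative of $U$, in the spirit of Zykov symmetrization.

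\textbf{Step 1: the inequality.} Let $\mathbb G(k,U)$ be the $W$-random graph on $[k]$ described after Definition~\ref{defin:graphon}, with $X_1,\dots,X_k$ uniform. By Remark~\ref{rem:Kr-hom-density}, $t(K_r,U)=0$ means the product $\prod_{a<b}U(x_a,x_b)$ vanishes almost everywhere on $[0,1]^r$. Since $U$ is $\{0,1\}$-valued, this gives that almost surely $\mathbb G(k,U)$ contains no $K_r$, by a union bound over the $\binom kr$ vertex sets. By Tur\'an's theorem, $|E(\mathbb G(k,U))|\le (1-\frac1q)\frac{k^2}{2}$ almost surely. Taking expectations gives $\binom k2\int U\le(1-\frac1q)\frac{k^2}{2}$. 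Dividing by $\binom k2$ and letting $k\to\infty$ yields $\int U\le 1-\frac1q$.

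\textbf{Step 2: the easy direction of equality.} If $\delta_\square(U,\mc U_q^\star)=0$, then since $\pi(\mc U_q^\star)$ is a single point (by the same relabeling argument given for $B_q^\star$ in Notation~\ref{not:Wqstar}), $U$ has zero cut distance from $2W_q^\star$. Edge density $t(K_2,\cdot)$ is continuous in cut distance by Lemma~\ref{lem:Kr-cut-lipschitz} with $r=2$, so $\int U=\int 2W_q^\star=q\cdot(q-1)\cdot q^{-2}=1-\frac1q$.

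\textbf{Step 3: the hard direction.} Assume $\int U=1-\frac1q$. I would use Zykov symmetrization on the graphon. Let $d(x)=\int U(x,y)\,dy$.

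First, show that $U(x,y)=0$ almost everywhere forces $d(x)=d(y)$; call such pairs nonadjacent. Otherwise one can replace a small positive-measure set of points of smaller degree by copies of points of larger degree. This means redefining $U$ on the relevant strips via a measure-preserving rearrangement. The new graphon still has zero $K_r$-density, because cloning a vertex into a nonadjacent twin creates no new clique, and it has strictly larger edge density, contradicting Step~1.

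Second, show that non-adjacency is almost everywhere an equivalence relation. If $U(x,y)=U(y,z)=0$ but $U(x,z)=1$ on positive measure, replace neighbourhoods of $x$ and $z$ by that of $y$. This again keeps zero $K_r$-density, and the net gain computed in the Zykov way is strictly positive. This gives a contradiction.

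Then $U$ is almost everywhere a complete multipartite graphon with classes $A_1,A_2,\dots$, possibly countably many. Since $t(K_r,U)=0$, at most $q$ classes have positive measure. The edge density is then $1-\sum\lambda(A_i)^2$, which by Lemma~\ref{lem:cauchy-schwarz}\,(ii) is at most $1-\frac1q$, with equality only when there are exactly $q$ classes each of measure $\frac1q$. Hence $U$ equals an element of $\mc U_q^\star$ almost everywhere, so the cut distance is $0$.

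\textbf{Main obstacle.} The measure-theoretic rigor of the symmetrization in Step~3 will be the hardest part, in particular making ``cloning a point'' precise via positive-measure sets and Lebesgue density points. As a fallback I would use an alternative route. By Step~1's sampling argument together with Tur\'an stability (Erd\H{o}s--Simonovits), $\mathbb G(k,U)$ is $o(k^2)$-close in edit distance to a balanced complete $q$-partite graph with high probability. Since $\mathbb G(k,U)\to U$ in $\delta_\square$ almost surely, and $\pi(\mc U_q^\star)$ is a single point, this yields $\delta_\square(U,\mc U_q^\star)=0$.
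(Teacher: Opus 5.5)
Your proposal is correct, but it takes a different route from the paper. The paper gives no argument for this lemma: it simply cites the graphon Tur\'an theorem \cite[Cor.~16.11]{Lovasz2012} and asserts that its equality case is the condition $\delta_\square(U,\mc U_q^\star)=0$. You instead derive the graphon statement from finite extremal graph theory via $W$-random graphs. Steps 1 and 2 are fine.

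For the hard direction, it is your fallback that actually closes the proof. Since $|E(\mathbb G(k,U))|\le(1-\frac1q)\frac{k^2}{2}$ almost surely while its mean is $(1-\frac1q)\binom k2$, Markov's inequality applied to the nonnegative deficit shows that the edge count exceeds $(1-\frac1q-\delta)\frac{k^2}{2}$ with probability $1-O(1/(\delta k))$. Three further ingredients then combine: Erd\H{o}s--Simonovits stability, the $O(1/k)$ cut distance from the Tur\'an graph's empirical graphon to $\mc U_q^\star$, and $\delta_\square(W_{\mathbb G(k,U)},U)\to0$. Together they give $\delta_\square(U,\mc U_q^\star)\le 3\varepsilon$ for every $\varepsilon>0$.

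The Zykov route, as written, has a concrete hole. The phrase ``classes $A_1,A_2,\dots$, possibly countably many'' ignores a possible positive-measure set $N$ of points lying in null classes. This is repairable: such points are adjacent to almost every point, so $t(K_r,U)\ge\lambda(N)^r$ forces $\lambda(N)=0$. You would still have to turn almost-everywhere transitivity of non-adjacency into a genuine measurable partition, e.g.\ as the fibres of $x\mapsto U(x,\cdot)\in L^1$.

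The cloning step, by contrast, is easy to make rigorous. Let $\tau$ map $A'$ measure-preservingly onto a disjoint $B'$ of the same measure $\eta$ and fix everything else. Then $\tau_*\lambda$ has density $g=1+\mathbf 1_{B'}-\mathbf 1_{A'}\le 2$, so $t(K_r,U^\tau)\le 2^r t(K_r,U)=0$. Moreover $\int U^\tau-\int U=2\int_{B'}d-2\int_{A'}d+O(\eta^2)$. This shows that $d$ is almost everywhere constant for a maximizer, without using non-adjacency or density points.

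As for what each approach buys: the citation is one line, but it leaves to the reader the matching of Lov\'asz's equality case with the present notation. Your fallback is self-contained modulo Tur\'an, Erd\H{o}s--Simonovits and the sampling lemma, and it proves more. Run it with $\int U\ge 1-\frac1q-\frac{\delta}{2}$ instead of equality; the good edge-count event then still has probability at least $\frac12-o(1)$. It then yields directly the quantitative stability statement of Lemma~\ref{lem:turan-stab}, which the paper obtains separately by a compactness argument.
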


\begin{proof}
This is the $K_r$-free case of the graphon Tur\'an theorem; see
\cite[Cor.~16.11]{Lovasz2012}. In the present notation, its equality case is
exactly the condition
\[
\delta_\square(U,\mc U_q^\star)=0.
\]
\end{proof}

\begin{lem}[Entropy maximizers among graphons of zero \texorpdfstring{$K_r$}{Kr}-density]\label{lem:boundary-max-Kr}
Fix $r\ge 3$ and set $q=r-1$.
Then
\[
\sup\{\Ent(W):\ t(K_r,W)=0\}
=
\frac{\ln 2}{2}\Bigl(1-\frac1q\Bigr).
\]
A graphon $W$ with $t(K_r,W)=0$ attains this supremum if and only if
\[
\delta_\square(W,B_q^\star)=0.
\]
Equivalently, the reduced maximizer set is the singleton
$\widetilde B_q^\star=\{\widetilde W_q^\star\}$.
\end{lem}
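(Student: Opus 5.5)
Consider the support graphon
\[
U\coloneqq \mathbf{1}_{\{(x,y):\,W(x,y)>0\}}.
\]
This $U$ is a symmetric $\{0,1\}$-valued graphon. Since $t(K_r,W)=0$, Remark~\ref{rem:Kr-hom-density} shows that $\prod_{a<b}W(x_a,x_b)=0$ for almost every $(x_1,\dots,x_r)\in[0,1]^r$. The product is zero exactly when $\prod_{a<b}U(x_a,x_b)=0$, so $t(K_r,U)=0$ as well. By Lemma~\ref{lem:graphon-turan} we then get $\int U\le 1-\frac1q$.

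For the upper bound, note that $h(W)=0$ wherever $W=0$ (since $h(0)=0$) and $h\le\ln 2$ everywhere. Hence
\[
\Ent(W)=\frac12\int h(W)\le \frac{\ln 2}{2}\int U\le \frac{\ln2}{2}\Bigl(1-\frac1q\Bigr).
\]
The value is attained by $W_q^\star$. Its $K_r$-density vanishes because any $r$ points fall into $q=r-1$ blocks, so two of them share a block by pigeonhole, where the value is $0$. Its entropy is $\frac12\cdot\ln2\cdot(1-\frac1q)$, since $h(\tfrac12)=\ln2$ and the off-diagonal blocks have total measure $1-\frac1q$. By Lemma~\ref{lem:entropy-zero-cut} and Lemma~\ref{lem:Kr-cut-lipschitz}, every $W$ with $\delta_\square(W,B_q^\star)=0$ has the same entropy and zero $K_r$-density, so it also attains the supremum. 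This gives the supremum value and the ``if'' direction.

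For the converse, suppose $t(K_r,W)=0$ and $\Ent(W)$ equals the supremum. Then both inequalities in the chain are equalities.

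First, $\int U=1-\frac1q$, so by the equality case of Lemma~\ref{lem:graphon-turan} we have $\delta_\square(U,\mathcal U_q^\star)=0$.

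Second, $h(W)=\ln2$ almost everywhere on $\{U=1\}$. Since $h(p)=\ln 2$ only at $p=\tfrac12$ (which Lemma~\ref{lem:entropy-quadratic} quantifies), $W=\tfrac12 U$ almost everywhere.

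It remains to pass from $\delta_\square(U,\mathcal U_q^\star)=0$ to $\delta_\square(\tfrac12U,B_q^\star)=0$. Let $U^\star\coloneqq\mathbf{1}_{\bigcup_{a\ne b}A_a\times A_b}$ be the canonical element of $\mathcal U_q^\star$, using the partition from Notation~\ref{not:Wqstar}. Every element of $\mathcal U_q^\star$ is an a.e.\ relabeling of $U^\star$ by the same pasting argument as in Notation~\ref{not:Wqstar}. Hence $\delta_\square(U,U^\star)=0$. Because the cut norm is homogeneous, $\|\tfrac12U-\tfrac12(U^\star)^\phi\|_\square=\tfrac12\|U-(U^\star)^\phi\|_\square$ for each $\phi$. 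Taking the infimum gives
\[
\delta_\square\bigl(\tfrac12U,\,W_q^\star\bigr)=\tfrac12\,\delta_\square(U,U^\star)=0.
\]
Since $W=\tfrac12U$ almost everywhere, $\delta_\square(W,B_q^\star)=\delta_\square(W,W_q^\star)=0$. The singleton description $\widetilde B_q^\star=\{\widetilde W_q^\star\}$ was already established in Notation~\ref{not:Wqstar}.

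The step needing most care is the measure-theoretic bookkeeping in the equality case: the equalities hold only almost everywhere, and the zero-cut-distance statements must be transferred correctly through scaling and relabeling.
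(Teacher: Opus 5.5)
Your proposal is correct and follows essentially the same route as the paper: pass to the support indicator $U=\mathbf{1}_{\{W>0\}}$, bound $\Ent(W)\le\frac{\ln 2}{2}\int U$, apply the graphon Tur\'an theorem with its equality case, and use that $h(p)=\ln 2$ only at $p=\frac12$ to get $W=\frac12U$ almost everywhere. The only difference is cosmetic and comes in the last step. You observe that every element of $\mathcal U_q^\star$ is at zero cut distance from the canonical $U^\star$, so $\delta_\square(U,U^\star)=0$ directly, whereas the paper approximates by $V_\eta\in\mathcal U_q^\star$ and uses homogeneity; both arguments are valid.
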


\begin{rem}
Lemma~\ref{lem:boundary-max-Kr} can also be deduced from
\cite[Thm.~8.2]{ChatterjeeDiaconis2013}. In that theorem, take $H=K_r$ and
$p=1/2$. Since $\chi(K_r)=r$, the graphon $g$ appearing there is the balanced
$(r-1)$-partite $\{0,1\}$-valued graphon. Hence $pg=\frac12 g$ is a
representative of the reduced class $\widetilde W_q^\star$; equivalently,
\[
\pi(pg)=\widetilde W_q^\star.
\]

The off-diagonal support of $g$ has measure $1-1/q$. Therefore
\[
\Ent\left(\frac12g\right)
=
\frac{\ln 2}{2}\left(1-\frac1q\right),
\]
which also gives the value of the constrained maximum.

For $p=1/2$, the Erd\H{o}s-R\'enyi rate function $I_{\frac12}$, defined in
\eqref{eq:CV-rate-half} below, satisfies
\[
I_{\frac12}(W)
=
\frac{\ln 2}{2}-\Ent(W).
\]
Thus minimizing $I_{\frac12}$ under the constraint $t(K_r,W)=0$ is the same as
maximizing $\Ent(W)$ under the same constraint. We nevertheless give a direct
proof below, using the graphon Tur\'an theorem, because it is short and makes
the equality cases explicit.
\end{rem}

\begin{proof}[Proof of Lemma~\ref{lem:boundary-max-Kr}]
Let $W$ be a graphon satisfying
\[
t(K_r,W)=0.
\]
Set
\[
S\coloneqq\{(x,y)\in[0,1]^2:W(x,y)>0\},
\qquad
U\coloneqq\mathbf{1}_S.
\]
Since $W$ is measurable and symmetric almost everywhere, the function $U$ is
also measurable and symmetric almost everywhere. Also, $U$ only takes the
values $0$ and $1$. Hence $U$ is a graphon.

We first show that $U$ has zero $K_r$-density. By
Remark~\ref{rem:Kr-hom-density}, the integrand defining $t(K_r,W)$ is
nonnegative. Since $t(K_r,W)=0$, we have
\[
\prod_{1\le a<b\le r} W(x_a,x_b)=0
\]
for almost every $(x_1,\dots,x_r)\in[0,1]^r$. If
\[
\prod_{1\le a<b\le r} U(x_a,x_b)=1,
\]
then every factor $U(x_a,x_b)$ is equal to $1$. By the definition of $U$,
this means that every corresponding factor $W(x_a,x_b)$ is strictly positive.
Thus
\[
\prod_{1\le a<b\le r} W(x_a,x_b)>0.
\]
This can happen only on a null set. Therefore
\[
t(K_r,U)=0.
\]

We now bound the entropy of $W$. Since $W=0$ on $S^c$ and $h(0)=0$,
Definition~\ref{def:graphon-entropy} gives
\[
\Ent(W)
=
\frac12\int_S h(W(x,y))\,dx\,dy.
\]
By Lemma~\ref{lem:entropy-quadratic}, we have $h(p)\le \ln 2$ for every
$p\in[0,1]$. Hence
\begin{equation}
\label{eq:boundary-support-entropy-bound}
\Ent(W)
\le
\frac{\ln 2}{2}\lambda^2(S)
=
\frac{\ln 2}{2}\int_{[0,1]^2}U(x,y)\,dx\,dy.
\end{equation}
Since $U$ is a $\{0,1\}$-valued graphon and $t(K_r,U)=0$,
Lemma~\ref{lem:graphon-turan} gives
\begin{equation}
\label{eq:boundary-support-turan-bound}
\int_{[0,1]^2}U(x,y)\,dx\,dy
\le
1-\frac1q.
\end{equation}
Combining \eqref{eq:boundary-support-entropy-bound} and
\eqref{eq:boundary-support-turan-bound}, we get
\[
\Ent(W)
\le
\frac{\ln 2}{2}\left(1-\frac1q\right).
\]
Therefore
\[
\sup\{\Ent(W):t(K_r,W)=0\}
\le
\frac{\ln 2}{2}\left(1-\frac1q\right).
\]

Next we show that this upper bound is attained. Consider the canonical
graphon $W_q^\star$ from Notation~\ref{not:Wqstar}. The graphon
$W_q^\star$ has value $0$ on the diagonal blocks and value $1/2$ on the
off-diagonal blocks. Since $q=r-1$, any choice of $r$ points in $[0,1]$
contains two points in the same canonical block. For that pair, the
corresponding factor in the $K_r$-integrand is $0$. Hence
\begin{equation}
\label{eq:boundary-Wqstar-zero-Kr}
t(K_r,W_q^\star)=0.
\end{equation}

The diagonal blocks have total measure $1/q$, and the off-diagonal blocks
have total measure $1-1/q$. Using Definition~\ref{def:graphon-entropy},
together with
\[
h(0)=0
\qquad\text{and}\qquad
h\left(\frac12\right)=\ln 2,
\]
we obtain
\begin{equation}
\label{eq:boundary-Wqstar-entropy}
\Ent(W_q^\star)
=
\frac12\left(1-\frac1q\right)h\left(\frac12\right)
=
\frac{\ln 2}{2}\left(1-\frac1q\right).
\end{equation}
Thus
\[
\sup\{\Ent(W):t(K_r,W)=0\}
=
\frac{\ln 2}{2}\left(1-\frac1q\right).
\]

It remains to identify the graphons that attain this value.

Suppose first that $W$ is an entropy maximizer under the constraint
\[
t(K_r,W)=0.
\]
Use the same notation as above,
\[
S=\{(x,y)\in[0,1]^2:W(x,y)>0\},
\qquad
U=\mathbf{1}_S.
\]
Since $W$ attains the value in \eqref{eq:boundary-Wqstar-entropy}, both
inequalities \eqref{eq:boundary-support-entropy-bound} and
\eqref{eq:boundary-support-turan-bound} must be equalities.

Equality in \eqref{eq:boundary-support-turan-bound}, by the equality case in
Lemma~\ref{lem:graphon-turan}, gives
\[
\delta_\square(U,\mc U_q^\star)=0.
\]
Equality in \eqref{eq:boundary-support-entropy-bound} gives
\[
h(W(x,y))=\ln 2
\qquad
\text{for almost every }(x,y)\in S.
\]
By Lemma~\ref{lem:entropy-quadratic}, equality $h(p)=\ln 2$ is possible only
when $p=1/2$. Hence
\[
W(x,y)=\frac12
\qquad
\text{for almost every }(x,y)\in S.
\]
Also, by the definition of $S$, we have $W=0$ almost everywhere on $S^c$.
Therefore
\[
W=\frac12\,U
\qquad
\text{almost everywhere}.
\]

By Notation~\ref{not:Uqstar}, the family $B_q^\star$ consists exactly of the
graphons $\frac12 V$ with $V\in\mc U_q^\star$, up to almost-everywhere
equality. Since
\[
\delta_\square(U,\mc U_q^\star)=0,
\]
we get
\[
\delta_\square\left(\frac12U,B_q^\star\right)=0.
\]
Indeed, for every $\eta>0$, choose $V_\eta\in\mc U_q^\star$ such that
\[
\delta_\square(U,V_\eta)<2\eta.
\]
Then $\frac12 V_\eta\in B_q^\star$, and by homogeneity of the cut norm,
\[
\delta_\square\left(\frac12U,\frac12V_\eta\right)
\le
\frac12\,\delta_\square(U,V_\eta)
<
\eta.
\]
Since $W=\frac12U$ almost everywhere, it follows that
\[
\delta_\square(W,B_q^\star)=0.
\]

Conversely, suppose that
\[
\delta_\square(W,B_q^\star)=0.
\]
By Notation~\ref{not:Wqstar}, this is equivalent to
\[
\delta_\square(W,W_q^\star)=0.
\]
Using Lemma~\ref{lem:Kr-cut-lipschitz} and
\eqref{eq:boundary-Wqstar-zero-Kr}, we get
\[
t(K_r,W)=t(K_r,W_q^\star)=0.
\]
Using Lemma~\ref{lem:entropy-zero-cut} and
\eqref{eq:boundary-Wqstar-entropy}, we get
\[
\Ent(W)=\Ent(W_q^\star)
=
\frac{\ln 2}{2}\left(1-\frac1q\right).
\]
Thus $W$ satisfies the constraint $t(K_r,W)=0$ and attains the maximum
entropy.

We have shown that the entropy maximizers are exactly the graphons $W$
satisfying
\[
\delta_\square(W,B_q^\star)=0.
\]
Finally, Notation~\ref{not:Wqstar} gives
\[
\widetilde B_q^\star=\{\widetilde W_q^\star\},
\]
so the reduced maximizer set is the singleton
$\{\widetilde W_q^\star\}$.
\end{proof}

\begin{rem}
\label{rem:relation-to-fixed-parameter-ergms}
By Lemma~\ref{lem:empirical-Kr-density}, the reduced mass of
$G\in\mbG_n$ is
\[
\exp\bigl(-r!\,w\,K_r(G)\bigr)
=
\exp\left(-n^2L_n\,t(K_r,W_G)\right),
\qquad
L_n\coloneqq wn^{r-2}.
\]
The corresponding graphon functional is therefore
\[
T_n(W)=-L_n\,t(K_r,W).
\]

For fixed $L>0$, let
\[
\widetilde{\mc F}_L^\star
\coloneqq
\pi\left(
\operatorname*{arg\,max}_{W\in\mc W}
\left\{
\Ent(W)-L\,t(K_r,W)
\right\}
\right).
\]
For the ERGM with zero edge coefficient and clique coefficient $-L$,
\cite[Thm.~3.2]{ChatterjeeDiaconis2013} gives exponential concentration
around $\widetilde{\mc F}_L^\star$. The concentration rate is not given
uniformly in $L$. Moreover, \cite[Thm.~7.1]{ChatterjeeDiaconis2013} gives
\[
\lim_{L\to\infty}
\sup_{\widetilde W\in\widetilde{\mc F}_L^\star}
d_\square\left(\widetilde W,\widetilde W_q^\star\right)
=
0.
\]
These results describe an iterated limit in which one first lets
$n\to\infty$ with $L$ fixed and then lets $L\to\infty$.
In the fixed-weight MLN considered here,
\[
L_n=wn^{r-2},
\qquad
\lim_{n\to\infty}L_n=+\infty.
\]
Thus $n$ and $L_n$ grow together.
Since the concentration
rate for fixed $L$ is not uniform in $L$, the fixed-coefficient theorem does
not directly give a bound of the form $e^{-cn^2}$ with $c>0$ independent of
$n$ along this sequence.
\end{rem}

\subsection{Entropy gap away from the maximizers}

The next two lemmas show that once we stay a fixed cut distance away from the
reduced maximizer class represented by $B_q^\star$, the entropy drops.

\begin{lem}\label{lem:turan-stab}
Fix $r\ge 3$ and set $q=r-1$.
For every $\varepsilon>0$ there exists $\gamma_r(\varepsilon)>0$ such that the following holds.

If $U$ is a $\{0,1\}$-valued graphon and satisfies
\[
t(K_r,U)=0,
\qquad
\delta_\square(U,\mc U_q^\star)\ge \varepsilon,
\]
then
\[
\int_{[0,1]^2}U\le \Bigl(1-\frac1q\Bigr)-\gamma_r(\varepsilon).
\]
\end{lem}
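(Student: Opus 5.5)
Argue by contradiction, using compactness of the reduced graphon space $(\widetilde{\mc W},d_\square)$ (Notation~\ref{not:reduced-graphon-space}) together with the equality case of the graphon Tur\'an theorem (Lemma~\ref{lem:graphon-turan}). Suppose the statement fails for some $\varepsilon>0$. Then there are $\{0,1\}$-valued graphons $U_k$ with $t(K_r,U_k)=0$, $\delta_\square(U_k,\mc U_q^\star)\ge\varepsilon$ and
\[
\int_{[0,1]^2}U_k\ \ge\ \Bigl(1-\frac1q\Bigr)-\frac1k .
\]
By compactness, after passing to a subsequence, there is a graphon $U$ with $\delta_\square(U_k,U)\to0$.

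Next, pass the three properties to the limit. The edge density $\int W=t(K_2,W)$ is continuous in cut distance by Lemma~\ref{lem:Kr-cut-lipschitz} with $r=2$. So is $t(K_r,\cdot)$ by the same lemma. The distance $\delta_\square(\cdot,\mc U_q^\star)$ is $1$-Lipschitz by the triangle inequality. Therefore $U$ satisfies
\[
t(K_r,U)=0,\qquad \int U\ge 1-\frac1q,\qquad \delta_\square(U,\mc U_q^\star)\ge\varepsilon .
\]

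The main obstacle is that $U$ need not be $\{0,1\}$-valued, since cut limits of $0$--$1$ graphons can take fractional values. So Lemma~\ref{lem:graphon-turan} does not apply directly. Reduce to the $0$--$1$ case as in the proof of Lemma~\ref{lem:boundary-max-Kr}. Set $S=\{U>0\}$ and $U'=\mathbf 1_S$. The argument there shows $t(K_r,U')=0$, so Lemma~\ref{lem:graphon-turan} gives
\[
1-\frac1q\ \le\ \int U\ \le\ \int U'\ \le\ 1-\frac1q .
\]
Hence all of these are equalities. Since $U\le U'$ pointwise, equality $\int U=\int U'$ forces $U=U'$ almost everywhere, so $U$ is in fact $\{0,1\}$-valued. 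The equality case of Lemma~\ref{lem:graphon-turan} then yields $\delta_\square(U,\mc U_q^\star)=0$. This contradicts $\delta_\square(U,\mc U_q^\star)\ge\varepsilon$.

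Two technical points need care. First, the limit produced by compactness lives in $\widetilde{\mc W}$, so pick a representative $U\in\mc W$. All quantities used are invariant under zero cut distance: densities by Lemma~\ref{lem:Kr-cut-lipschitz}, and $\delta_\square(\cdot,\mc U_q^\star)$ by the triangle inequality. Second, $\gamma_r(\varepsilon)$ depends only on $r$ and $\varepsilon$. This is automatic in a contradiction argument, but the proof gives no explicit value of $\gamma_r(\varepsilon)$. If a quantitative version were wanted, one could instead invoke the Erd\H{o}s--Simonovits stability theorem, but the compactness argument is enough for Lemma~\ref{lem:turan-stab}.
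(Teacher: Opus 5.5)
Your proposal is correct and follows essentially the same route as the paper. Both argue by contradiction using compactness of $(\widetilde{\mc W},d_\square)$, pass $t(K_r,\cdot)$, the edge density and $\delta_\square(\cdot,\mc U_q^\star)$ to the limit, and replace the limit $U$ by $\mathbf{1}_{\{U>0\}}$, so that the Tur\'an bound and its equality case force $U=\mathbf{1}_{\{U>0\}}$ a.e.\ and $\delta_\square(U,\mc U_q^\star)=0$. The only cosmetic difference is that you get continuity of the edge density from Lemma~\ref{lem:Kr-cut-lipschitz} with $r=2$, whereas the paper bounds $\bigl|\int(W-V)\bigr|\le\delta_\square(W,V)$ directly.
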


\begin{proof}
Fix $\varepsilon>0$ and define
\[
m_\varepsilon
\coloneqq
\sup\left\{
\int_{[0,1]^2} U :
\begin{array}{l}
U \text{ is a $\{0,1\}$-valued graphon},\\
t(K_r,U)=0,\\
\delta_\square(U,\mc U_q^\star)\ge \varepsilon
\end{array}
\right\}.
\]
If the set inside the supremum is empty, take
\[
\gamma_r(\varepsilon)
=
\frac12\left(1-\frac1q\right).
\]
The conclusion then holds automatically. Hence assume that the set is nonempty.

By Lemma~\ref{lem:graphon-turan},
\[
m_\varepsilon\le 1-\frac1q.
\]
We claim that the inequality is strict.

Assume instead that
\[
m_\varepsilon=1-\frac1q.
\]
By the definition of the supremum, there exists a sequence $(U_n)$ of $\{0,1\}$-valued graphons such that
\[
t(K_r,U_n)=0,
\qquad
\delta_\square(U_n,\mc U_q^\star)\ge \varepsilon,
\qquad
\lim_{n\to\infty}\int_{[0,1]^2}U_n=1-\frac1q.
\]
Let $\widetilde U_n\in\widetilde{\mc W}$ be the equivalence class of $U_n$.
By Notation~\ref{not:reduced-graphon-space}, the metric space
$(\widetilde{\mc W},d_\square)$ is compact; see
\cite[Thm.~9.23]{Lovasz2012}. Hence, after passing to a subsequence, we may
assume that
\[
\lim_{n\to\infty}d_\square(\widetilde U_n,\widetilde U)=0
\]
for some $\widetilde U\in\widetilde{\mc W}$.
Choose any representative $U\in\widetilde U$.
Since the metric on $\widetilde{\mc W}$ is induced by $\delta_\square$, this means exactly that
\[
\lim_{n\to\infty}\delta_\square(U_n,U)=0.
\]

By Lemma~\ref{lem:Kr-cut-lipschitz}, the map $t(K_r,\cdot)$ is continuous with respect to cut distance. Therefore
\[
\lim_{n\to\infty}t(K_r,U_n)=t(K_r,U)=0.
\]
Also, the limit still stays at least $\varepsilon$ away from $\mc U_q^\star$.
Indeed,
\[
\delta_\square(U,\mc U_q^\star)
\ge
\delta_\square(U_n,\mc U_q^\star)-\delta_\square(U_n,U),
\]
and letting $n\to\infty$ gives

\begin{equation}\label{eq:delta-away-from-Uqstar}
\delta_\square(U,\mc U_q^\star)\ge \varepsilon.
\end{equation}

We also need the edge density to pass to the limit.
For graphons $W$ and $V$, and for any measure-preserving bijection $\phi$,
\[
\int_{[0,1]^2}V
=
\int_{[0,1]^2}V^\phi
\]
by a change of variables. Therefore
\[
\left|\int_{[0,1]^2}(W-V)\right|
=
\left|\int_{[0,1]^2}(W-V^\phi)\right|.
\]
By Definition~\ref{defin:cutnorm}, we may choose $S=T=[0,1]$ in the
supremum defining the cut norm, so
\[
\left|\int_{[0,1]^2}(W-V^\phi)\right|
\le
\|W-V^\phi\|_\square.
\]
Taking the infimum over all measure-preserving bijections $\phi$ gives
\[
\left|\int_{[0,1]^2}(W-V)\right|
\le
\delta_\square(W,V).
\]
Apply this to $(W,V)=(U_n,U)$ and let $n\to\infty$:

\begin{equation}\label{eq:edge-density-limit}
\int_{[0,1]^2} U
=
\lim_{n\to\infty}\int_{[0,1]^2} U_n
=
1-\frac1q.
\end{equation}

Now define the positivity set of $U$ by
\[
S\coloneqq \{(x,y)\in[0,1]^2:\ U(x,y)>0\},
\qquad
V\coloneqq \mathbf{1}_S.
\]
The function $V$ is measurable, $\{0,1\}$-valued, and symmetric almost everywhere; hence it is a graphon. Moreover,
\[
0\le U\le V\le 1.
\]

We show first that $V$ is a graphon of zero $K_r$-density.
If
\[
\prod_{1\le a<b\le r}V(x_a,x_b)=1,
\]
then each factor $U(x_a,x_b)$ is strictly positive, so
\[
\prod_{1\le a<b\le r}U(x_a,x_b)>0.
\]
But the integrand defining $t(K_r,U)$ is nonnegative and has integral $0$.
So this can happen only on a set of measure $0$.
Hence
\[
t(K_r,V)=0.
\]

Because $U\le V$, equation~\eqref{eq:edge-density-limit} gives
\[
\int_{[0,1]^2}V\ge \int_{[0,1]^2}U=1-\frac1q.
\]
On the other hand, $V$ is a $\{0,1\}$-valued graphon of zero $K_r$-density, so Lemma~\ref{lem:graphon-turan}
gives
\[
\int_{[0,1]^2}V\le 1-\frac1q.
\]
Thus
\[
\int_{[0,1]^2}V=1-\frac1q.
\]
By the equality case in Lemma~\ref{lem:graphon-turan}, this implies
\[
\delta_\square(V,\mc U_q^\star)=0.
\]

Finally,
\[
V-U\ge 0
\]
pointwise and
\[
\int_{[0,1]^2}(V-U)
=
\int_{[0,1]^2}V-\int_{[0,1]^2}U
=
0.
\]
A nonnegative integrable function with integral $0$ is zero almost everywhere.
So
\[
U=V
\qquad\text{almost everywhere}.
\]
Therefore
\[
\delta_\square(U,\mc U_q^\star)=0,
\]
contradicting the earlier bound $\delta_\square(U,\mc U_q^\star)\ge \varepsilon$ in \eqref{eq:delta-away-from-Uqstar}.

This contradiction shows that
\[
m_\varepsilon<1-\frac1q.
\]
Now define
\[
\gamma_r(\varepsilon)\coloneqq \Bigl(1-\frac1q\Bigr)-m_\varepsilon>0.
\]
This is exactly the desired gap.
\end{proof}

\begin{lem}[Entropy gap away from \texorpdfstring{$B_q^\star$}{Bq*}]\label{lem:gap-Kr}
Fix $r\ge 3$ and set $q=r-1$.
For every $\varepsilon>0$ there exists $c_{r,\varepsilon}>0$ such that every graphon $W$
satisfying
\[
t(K_r,W)=0,
\qquad
\delta_\square(W,B_q^\star)\ge \varepsilon
\]
also satisfies
\[
\Ent(W)\le \frac{\ln 2}{2}\Bigl(1-\frac1q\Bigr)-c_{r,\varepsilon}.
\]
\end{lem}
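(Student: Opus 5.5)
We argue as in the proof of Lemma~\ref{lem:boundary-max-Kr}, splitting the entropy deficit into two contributions: a Tur\'an deficit of the support and a deviation of the values from $\tfrac12$. Let $W$ satisfy $t(K_r,W)=0$ and $\delta_\square(W,B_q^\star)\ge\varepsilon$. Set $S=\{W>0\}$ and $U=\mathbf 1_S$. As shown there, $U$ is a $\{0,1\}$-valued graphon with $t(K_r,U)=0$. By Lemma~\ref{lem:entropy-quadratic},
\[
\Ent(W)\le \frac{\ln 2}{2}\lambda^2(S)-\int_S\Bigl(W-\frac12\Bigr)^2 .
\]
Fix a small parameter $\eta>0$ to be chosen in terms of $\varepsilon$. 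There are two cases.

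\emph{Case 1: $\delta_\square(U,\mc U_q^\star)\ge\eta$.} Lemma~\ref{lem:turan-stab} gives $\lambda^2(S)\le 1-\frac1q-\gamma_r(\eta)$. Dropping the nonpositive quadratic term yields the bound $\Ent(W)\le \frac{\ln2}{2}(1-\frac1q)-\frac{\ln 2}{2}\gamma_r(\eta)$.

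\emph{Case 2: $\delta_\square(U,\mc U_q^\star)<\eta$.} Choose $V\in\mc U_q^\star$ with $\delta_\square(U,V)<\eta$. Then $\frac12V\in B_q^\star$, and by homogeneity $\delta_\square(\frac12U,\frac12V)<\eta/2$. The triangle inequality gives
\[
\varepsilon\le\delta_\square(W,B_q^\star)\le \|W-\tfrac12U\|_\square+\tfrac{\eta}{2}\le \|W-\tfrac12U\|_1+\tfrac\eta2,
\]
since the cut norm is bounded by the $L^1$ norm (take $|\cdot|$ inside the integral in Definition~\ref{defin:cutnorm}). Taking $\eta=\varepsilon$, we get $\|W-\frac12U\|_1\ge\varepsilon/2$. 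Now $W-\frac12U$ vanishes off $S$ and equals $W-\frac12$ on $S$, so Lemma~\ref{lem:cauchy-schwarz}(i) with $\lambda^2(S)\le1$ gives
\[
\int_S\Bigl(W-\frac12\Bigr)^2\ge\Bigl(\int_S\bigl|W-\tfrac12\bigr|\Bigr)^2\ge\frac{\varepsilon^2}{4}.
\]
Combined with $\lambda^2(S)\le 1-\frac1q$ from Lemma~\ref{lem:graphon-turan}, this yields $\Ent(W)\le\frac{\ln2}{2}(1-\frac1q)-\varepsilon^2/4$.

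Hence the lemma holds with $c_{r,\varepsilon}=\min\{\frac{\ln2}{2}\gamma_r(\varepsilon),\ \varepsilon^2/4\}$. The main obstacle is Case 1, which is exactly where compactness enters; it is already encapsulated in Lemma~\ref{lem:turan-stab}. The only other point to check is that $\|\cdot\|_\square\le\|\cdot\|_1$ and that choosing a near-optimal $V$ in $\mc U_q^\star$ is legitimate, since the infimum need not be attained. Both are routine.
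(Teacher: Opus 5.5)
Your proof is correct and takes essentially the same route as the paper: the support indicator $U=\mathbf 1_{\{W>0\}}$, a case split on $\delta_\square(U,\mc U_q^\star)$, Lemma~\ref{lem:turan-stab} in the far case, and in the near case the quadratic entropy bound together with Cauchy--Schwarz applied to $\|W-\frac12U\|_1$. The only differences are in the constants. You split at $\varepsilon$ rather than $\varepsilon/2$, and you use $\lambda^2(S)\le 1$ in Cauchy--Schwarz, which gives $\varepsilon^2/4$ instead of the paper's $\frac{9\varepsilon^2}{16(1-1/q)}$; neither change affects the argument.
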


\begin{proof}
Fix $\varepsilon>0$ and let $W$ be a graphon with
\[
t(K_r,W)=0,
\qquad
\delta_\square(W,B_q^\star)\ge \varepsilon.
\]
As before, set
\[
S\coloneqq \{(x,y)\in[0,1]^2:\ W(x,y)>0\},
\qquad
U\coloneqq \mathbf{1}_S.
\]
The function $U$ is measurable, $\{0,1\}$-valued, and symmetric almost everywhere; hence it is a graphon.

Since $t(K_r,W)=0$, Remark~\ref{rem:Kr-hom-density} implies that
\[
\prod_{1\le a<b\le r} W(x_a,x_b)=0
\]
for almost every $(x_1,\dots,x_r)\in[0,1]^r$. If
\[
\prod_{1\le a<b\le r} U(x_a,x_b)=1,
\]
then every factor $W(x_a,x_b)$ is strictly positive, and therefore
\[
\prod_{1\le a<b\le r} W(x_a,x_b)>0.
\]
This can happen only on a null set. Hence
\[
t(K_r,U)=0.
\]
Moreover, $W=0$ on $S^c$, while $h(u)\le\ln 2$ for every $u\in[0,1]$. Therefore
\[
\Ent(W)
=
\frac12\int_S h(W(x,y))\,dx\,dy
\le
\frac{\ln 2}{2}\lambda^2(S)
=
\frac{\ln 2}{2}\int_{[0,1]^2}U.
\]

We now split into two cases.

\smallskip
\noindent\textbf{Case 1: }$\delta_\square(U,\mc U_q^\star)\ge \varepsilon/2$.

Then Lemma~\ref{lem:turan-stab} gives
\[
\int_{[0,1]^2}U
\le
\Bigl(1-\frac1q\Bigr)-\gamma_r(\varepsilon/2).
\]
Hence
\[
\Ent(W)
\le
\frac{\ln 2}{2}\Bigl(1-\frac1q\Bigr)
-
\frac{\ln 2}{2}\,\gamma_r(\varepsilon/2).
\]
Thus, in Case~1, the claimed bound holds with entropy loss $\frac{\ln 2}{2}\gamma_r(\varepsilon/2)$.

\smallskip
\noindent\textbf{Case 2: }$\delta_\square(U,\mc U_q^\star)< \varepsilon/2$.

Choose $U^\star\in \mc U_q^\star$ such that
\[
\delta_\square(U,U^\star)<\frac{\varepsilon}{2}.
\]
Set
\[
V^\star\coloneqq \frac12\,U^\star.
\]
By Notation~\ref{not:Uqstar}, the family $B_q^\star$ consists exactly of graphons that are equal almost everywhere to $\frac12 U$ for some $U\in\mc U_q^\star$. Since $U^\star\in\mc U_q^\star$ and $V^\star=\frac12U^\star$, it follows that $V^\star\in B_q^\star$.
Since $\delta_\square(W,B_q^\star)\ge \varepsilon$, we have
\[
\delta_\square(W,V^\star)\ge \varepsilon.
\]
Also, by homogeneity of the cut distance,
\[
\delta_\square\Bigl(\frac12U,V^\star\Bigr)
=
\delta_\square\Bigl(\frac12U,\frac12U^\star\Bigr)
=
\frac12\,\delta_\square(U,U^\star)
<
\frac{\varepsilon}{4}.
\]
So the triangle inequality gives
\[
\delta_\square\Bigl(W,\frac12U\Bigr)
\ge
\delta_\square(W,V^\star)-\delta_\square\Bigl(\frac12U,V^\star\Bigr)
>
\frac{3\varepsilon}{4}.
\]
Since
\[
\delta_\square\Bigl(W,\frac12U\Bigr)
\le
\Bigl\|W-\frac12U\Bigr\|_\square
\le
\Bigl\|W-\frac12U\Bigr\|_1,
\]
it follows that
\[
\Bigl\|W-\frac12U\Bigr\|_1>\frac{3\varepsilon}{4}.
\]
Because $U=\mathbf{1}_S$ and $W=0$ on $S^c$, this means
\[
\int_S\left|W-\frac12\right|>\frac{3\varepsilon}{4}.
\]

Since $U$ is $\{0,1\}$-valued and is a graphon of zero $K_r$-density, Lemma~\ref{lem:graphon-turan} gives
\[
\lambda^2(S)=\int_{[0,1]^2}U\le 1-\frac1q.
\]
By Lemma~\ref{lem:cauchy-schwarz}\textup{(i)},
\[
\left(\int_S\left|W-\frac12\right|\right)^2
\le
\lambda^2(S)\int_S\left(W-\frac12\right)^2.
\]
Therefore
\[
\int_S\left(W-\frac12\right)^2
\ge
\frac{\left(\frac{3\varepsilon}{4}\right)^2}{1-\frac1q}.
\]

For every $(x,y)\in S$, we also have $W(x,y)\in[0,1]$. Hence
Lemma~\ref{lem:entropy-quadratic} gives
\[
h(W(x,y))
\le
\ln 2-2\Bigl(W(x,y)-\frac12\Bigr)^2
\qquad\text{for every }(x,y)\in S.
\]
Using this inequality in the entropy identity above, we obtain
\begin{align*}
\Ent(W)
&\le
\frac12\int_S
\left(\ln 2-2\Bigl(W(x,y)-\frac12\Bigr)^2\right)\,dx\,dy\\
&=
\frac{\ln 2}{2}\,\lambda^2(S)
-
\int_S\Bigl(W(x,y)-\frac12\Bigr)^2\,dx\,dy\\
&\le
\frac{\ln 2}{2}\Bigl(1-\frac1q\Bigr)
-
\frac{9}{16}\frac{\varepsilon^2}{(1-\frac1q)}.
\end{align*}
Here the last inequality uses the two estimates already proved above:
\[
\lambda^2(S)\le 1-\frac1q
\qquad\text{and}\qquad
\int_S\Bigl(W(x,y)-\frac12\Bigr)^2\,dx\,dy
\ge
\frac{9}{16}\frac{\varepsilon^2}{(1-\frac1q)}.
\]

Combine the two cases and define
\[
c_{r,\varepsilon}
\coloneqq
\min\left\{
\frac{\ln 2}{2}\,\gamma_r(\varepsilon/2),
\frac{9}{16}\frac{\varepsilon^2}{(1-\frac1q)}
\right\}>0.
\]
Then
\[
\Ent(W)\le \frac{\ln 2}{2}\Bigl(1-\frac1q\Bigr)-c_{r,\varepsilon}.
\]
\end{proof}

The graphon removal lemma in \cite[Lem.~2.3]{HladkyHuPiguet2021} is stated
with the strict assumption $t(K_r,W)<\delta$. We will use the following
non-strict version, obtained by applying that lemma with a smaller threshold.
\begin{lem}[Graphon \texorpdfstring{$K_r$}{Kr} removal]\label{lem:graphon-Kr-removal}
Fix $r\ge 3$.
For every $\eta>0$ there exists $\delta=\delta_{K_r}(\eta)>0$ such that whenever a graphon $W$ satisfies
\[
t(K_r,W)\le \delta,
\]
there exists a graphon $W'$ with
\[
t(K_r,W')=0,
\qquad
\|W-W'\|_1\le \eta.
\]
\end{lem}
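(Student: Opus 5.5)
The plan is to derive this non-strict statement directly from the strict version in \cite[Lem.~2.3]{HladkyHuPiguet2021}. That lemma says: for every $\eta>0$ there is $\delta_0=\delta_0(\eta)>0$ such that every graphon $W$ with $t(K_r,W)<\delta_0$ admits a graphon $W'$ with $t(K_r,W')=0$ and $\|W-W'\|_1\le\eta$. If the cited formulation measures the distance differently, for instance by deleting a set of small measure, then I will first translate it into this $L^1$ form. Given $\eta>0$, I will fix such a $\delta_0$ and define
\[
\delta_{K_r}(\eta)\coloneqq \frac{\delta_0(\eta)}{2}>0.
\]
Now suppose $W$ satisfies $t(K_r,W)\le\delta_{K_r}(\eta)$. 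Then
\[
t(K_r,W)\le \tfrac12\delta_0(\eta)<\delta_0(\eta),
\]
so the strict hypothesis of the cited lemma holds. That lemma then supplies the graphon $W'$ with the two required properties, which proves the claim.

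The only point needing care is that the conclusion of the cited lemma matches ours exactly. The output must be a genuine graphon, that is, measurable, symmetric almost everywhere, and $[0,1]$-valued. It must have $t(K_r,W')=0$, with exactly zero density rather than merely small density. And the distance must be measured in $L^1$ with the same normalisation as in Notation~\ref{not:L1norm}.

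Suppose the reference instead states $\|W-W'\|_1<\eta$, or produces $W'$ by zeroing $W$ on a set of measure less than $\eta$. Then $\|W-W'\|_1\le\eta$ still follows, since $0\le W\le 1$ bounds the $L^1$ change by the measure of the modified set. Suppose instead the reference measures the modified set on $[0,1]^2$ with a factor-$2$ discrepancy for symmetric pairs. Then I will apply it with $\eta/2$ in place of $\eta$.

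I expect no real obstacle here. The only step requiring attention is this bookkeeping of conventions between the cited lemma and the present notation, and it is handled by shrinking the parameters $\delta$ and $\eta$ passed to the reference by a constant factor.
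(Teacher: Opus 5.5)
Your proposal is correct and matches the paper's proof: both apply \cite[Lem.~2.3]{HladkyHuPiguet2021} with $F=K_r$ and error $\eta$ to obtain the strict threshold $\delta_0$, then set $\delta_{K_r}(\eta)\coloneqq\delta_0/2$ so that $t(K_r,W)\le\delta_{K_r}(\eta)$ implies the strict hypothesis $t(K_r,W)<\delta_0$. Your remarks on reconciling conventions are harmless but not needed, since the paper uses the cited lemma directly in the $L^1$ form with exactly zero $K_r$-density.
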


\begin{proof}
Apply \cite[Lem.~2.3]{HladkyHuPiguet2021} with $F=K_r$ and error
$\eta$. This gives a number $\delta_0>0$ such that, whenever
\[
    t(K_r,W)<\delta_0,
\]
there is a graphon $W'$ satisfying
\[
    t(K_r,W')=0
    \qquad\text{and}\qquad
    \|W-W'\|_1\le \eta .
\]

Now define
\[
    \delta_{K_r}(\eta)\coloneqq \frac{\delta_0}{2}.
\]
If $t(K_r,W)\le \delta_{K_r}(\eta)$, then
\[
    t(K_r,W)\le \frac{\delta_0}{2}<\delta_0.
\]
Thus $W$ satisfies the strict hypothesis of
\cite[Lem.~2.3]{HladkyHuPiguet2021}, applied with $F=K_r$ and error
$\eta$. Hence there exists a graphon $W'$ such that
\[
t(K_r,W')=0
\qquad\text{and}\qquad
\|W-W'\|_1\le\eta.
\]
This is the required graphon.
\end{proof}

\begin{lem}[Entropy is continuous in \texorpdfstring{$L^1$}{L1}]\label{lem:entropy-L1-continuity}
For graphons $U$ and $V$, and for every $\alpha>0$,
\begin{equation}\label{eq:Ent-L1-continuity}
|\Ent(U)-\Ent(V)|
\le
\frac12\,\omega(\alpha)
+
\frac{\ln 2}{2}\,\frac{\|U-V\|_1}{\alpha},
\end{equation}
where
\[
\omega(\alpha)
\coloneqq
\sup\{|h(x)-h(y)|:\ x,y\in[0,1],\ |x-y|\le \alpha\}.
\]
In particular, $\Ent$ is uniformly continuous on $\mc W$ with respect to $\|\cdot\|_1$.
For every $\tau>0$ there exists $\eta>0$ such that, for all graphons $U$ and $V$,
\[
\|U-V\|_1\le\eta
\qquad\Longrightarrow\qquad
|\Ent(U)-\Ent(V)|\le\tau.
\]
\end{lem}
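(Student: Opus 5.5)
The plan is to work pointwise and then integrate. Fix $\alpha>0$ and split $[0,1]^2$ into two measurable sets:
\[
A\coloneqq\{(x,y):|U(x,y)-V(x,y)|\le\alpha\},
\qquad
A^c=\{(x,y):|U(x,y)-V(x,y)|>\alpha\}.
\]
By Definition~\ref{def:graphon-entropy},
\[
|\Ent(U)-\Ent(V)|\le\frac12\int_{[0,1]^2}\bigl|h(U(x,y))-h(V(x,y))\bigr|\,dx\,dy,
\]
so it suffices to bound the integral of $|h(U)-h(V)|$ over $A$ and over $A^c$ separately.

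On $A$, the definition of the modulus of continuity $\omega$ gives $|h(U)-h(V)|\le\omega(\alpha)$ pointwise. Since $\lambda^2(A)\le1$, the contribution of $A$ to the integral is at most $\omega(\alpha)$.

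On $A^c$, we use only $0\le h\le\ln 2$, which gives $|h(U)-h(V)|\le\ln 2$. The measure of $A^c$ is controlled by Markov's inequality:
\[
\lambda^2(A^c)\le\frac{\|U-V\|_1}{\alpha}.
\]
Adding the two contributions and multiplying by $\tfrac12$ yields \eqref{eq:Ent-L1-continuity}. The only technical points are that $A$ is measurable, which holds because $U$ and $V$ are measurable, and that $h\circ U$ is measurable, which holds because $h$ is continuous.

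For the uniform continuity statement, the one step needing care is that $\omega(\alpha)\to0$ as $\alpha\to0$. This follows because $h$ is continuous on the compact interval $[0,1]$ (with the convention $0\ln0=0$), hence uniformly continuous there. Given $\tau>0$, first choose $\alpha>0$ with $\tfrac12\omega(\alpha)\le\tau/2$. Then set
\[
\eta\coloneqq\frac{\alpha\tau}{\ln 2},
\]
so that whenever $\|U-V\|_1\le\eta$,
\[
\frac{\ln 2}{2}\,\frac{\|U-V\|_1}{\alpha}\le\frac{\tau}{2},
\]
and therefore $|\Ent(U)-\Ent(V)|\le\tau$. This $\eta$ depends only on $\tau$, which is exactly uniform continuity. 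I expect no real obstacle in this argument.
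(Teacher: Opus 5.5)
Your proposal is correct and follows the paper's proof essentially step for step. It uses the same split of $[0,1]^2$ according to whether $|U-V|\le\alpha$, the pointwise bound $\omega(\alpha)$ on that set, the bound $\ln 2$ together with the Markov estimate $\lambda^2(A^c)\le\|U-V\|_1/\alpha$ on the complement, and then chooses $\alpha$ first and $\eta$ second (your explicit $\eta=\alpha\tau/\ln 2$ meets the paper's requirement $\frac{\ln 2}{2}\frac{\eta}{\alpha}\le\frac{\tau}{2}$ with equality).
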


\begin{proof}
Fix $\alpha>0$ and define the two sets
\[
A\coloneqq \{(x,y)\in[0,1]^2:\ |U(x,y)-V(x,y)|\le \alpha\},
\]
\[
B\coloneqq \{(x,y)\in[0,1]^2:\ |U(x,y)-V(x,y)|> \alpha\}.
\]
Then
\begin{align*}
|\Ent(U)-\Ent(V)|
&=
\frac12\left|\int_{[0,1]^2}(h(U)-h(V))\right|\\
&\le
\frac12\int_A |h(U)-h(V)|
+
\frac12\int_B |h(U)-h(V)|.
\end{align*}
On the set $A$ we have
\[
|h(U)-h(V)|\le \omega(\alpha),
\]
so
\[
\int_A |h(U)-h(V)|\le \omega(\alpha).
\]
On the set $B$ we use only the bound $0\le h\le \ln 2$, and since $B \subseteq [0,1]^2$, we get
\[
\int_B |h(U)-h(V)|\le (\ln 2)\,\lambda^2(B).
\]
Also, since $|U-V|>\alpha$ on $B$,
\[
\|U-V\|_1\ge \alpha\,\lambda^2(B),
\qquad\text{so}\qquad
\lambda^2(B)\le \frac{\|U-V\|_1}{\alpha}.
\]
Putting these estimates together yields \eqref{eq:Ent-L1-continuity}.

Finally, $h$ is uniformly continuous on the compact interval $[0,1]$, and therefore
\[
\lim_{\alpha\to0}\omega(\alpha)
=
0.
\]
Fix $\tau>0$. Choose $\alpha>0$ such that
\[
\frac12\,\omega(\alpha)\le\frac\tau2,
\]
and then choose $\eta>0$ such that
\[
\frac{\ln 2}{2}\,\frac{\eta}{\alpha}\le\frac\tau2.
\]
If $\|U-V\|_1\le\eta$, then \eqref{eq:Ent-L1-continuity} gives
\[
|\Ent(U)-\Ent(V)|\le\tau.
\]
This proves the uniform continuity statement.
\end{proof}

\begin{lem}\label{lem:near-gap-Kr}
Fix $r\ge 3$ and set $q=r-1$.
For every $\varepsilon>0$ there exist $\delta_{r,\varepsilon}>0$ and $c'_{r,\varepsilon}>0$ such
that every graphon $W$ satisfying
\[
t(K_r,W)\le \delta_{r,\varepsilon}
\quad \text{and}\quad
\delta_\square(W,B_q^\star)\ge \varepsilon,
\]
also satisfies
\[
\Ent(W)\le \frac{\ln 2}{2}\Bigl(1-\frac1q\Bigr)-c'_{r,\varepsilon}.
\]
\end{lem}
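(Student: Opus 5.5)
The plan is to reduce to the exact-zero case, Lemma~\ref{lem:gap-Kr}, by combining the removal lemma (Lemma~\ref{lem:graphon-Kr-removal}) with the $L^1$-continuity of entropy (Lemma~\ref{lem:entropy-L1-continuity}). The point is that a graphon with small $K_r$-density lies close in $L^1$ to a graphon with zero $K_r$-density. Passing to that nearby graphon changes the entropy only slightly and the cut distance to $B_q^\star$ only slightly.

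Fix $\varepsilon>0$ and let $c\coloneqq c_{r,\varepsilon/2}>0$ be the constant from Lemma~\ref{lem:gap-Kr} applied with $\varepsilon/2$ in place of $\varepsilon$. Apply Lemma~\ref{lem:entropy-L1-continuity} with $\tau\coloneqq c/2$ to obtain $\eta_0>0$ such that $\|U-V\|_1\le\eta_0$ implies $|\Ent(U)-\Ent(V)|\le c/2$. Set
\[
\eta\coloneqq\min\{\eta_0,\varepsilon/2\},\qquad
\delta_{r,\varepsilon}\coloneqq\delta_{K_r}(\eta),\qquad
c'_{r,\varepsilon}\coloneqq c/2,
\]
where $\delta_{K_r}(\eta)$ is given by Lemma~\ref{lem:graphon-Kr-removal}.

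Now let $W$ satisfy $t(K_r,W)\le\delta_{r,\varepsilon}$ and $\delta_\square(W,B_q^\star)\ge\varepsilon$. By Lemma~\ref{lem:graphon-Kr-removal} there is a graphon $W'$ with $t(K_r,W')=0$ and $\|W-W'\|_1\le\eta$. Since
\[
\delta_\square(W,W')\le\|W-W'\|_\square\le\|W-W'\|_1\le\varepsilon/2,
\]
the triangle inequality for $\delta_\square$, applied to each $V\in B_q^\star$ and followed by taking the infimum over $V$, gives $\delta_\square(W',B_q^\star)\ge\varepsilon/2$. Lemma~\ref{lem:gap-Kr} then yields
\[
\Ent(W')\le\frac{\ln2}{2}\Bigl(1-\frac1q\Bigr)-c.
\]
By the choice of $\eta_0$ we have $\Ent(W)\le\Ent(W')+c/2$, and combining the two bounds gives the claim with $c'_{r,\varepsilon}=c/2$.

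There is no substantial obstacle; the argument is bookkeeping of constants. The one thing to get right is that $\eta$ serves two purposes and must be chosen small enough for both: it must be at most $\varepsilon/2$ to control the cut distance, and at most $\eta_0$ to control the entropy. Only then is $\delta$ taken from the removal lemma applied to this $\eta$. The inequality $\|\cdot\|_\square\le\|\cdot\|_1$ used above is immediate from the definition of the cut norm.
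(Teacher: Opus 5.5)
Your proposal is correct and follows the paper's proof essentially step for step. It applies Lemma~\ref{lem:gap-Kr} at $\varepsilon/2$, chooses $\eta\le\varepsilon/2$ small enough for Lemma~\ref{lem:entropy-L1-continuity} with tolerance $c_{r,\varepsilon/2}/2$, takes $\delta$ from the removal lemma, and concludes with $c'_{r,\varepsilon}=c_{r,\varepsilon/2}/2$.
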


\begin{proof}
Fix $\varepsilon>0$.
Apply Lemma~\ref{lem:gap-Kr} with $\varepsilon/2$.
This gives a constant $c_{r,\varepsilon/2}>0$ such that every $V$ that is a graphon of zero $K_r$-density and satisfies
\[
\delta_\square(V,B_q^\star)\ge \frac{\varepsilon}{2}
\]
satisfies
\[
\Ent(V)\le \frac{\ln 2}{2}\Bigl(1-\frac1q\Bigr)-c_{r,\varepsilon/2}.
\]

Choose $\eta>0$ so small that
\[
\eta\le \frac{\varepsilon}{2}
\]
and
\[
\sup_{\substack{U,V\in\mc W\\ \|U-V\|_1\le \eta}}
|\Ent(U)-\Ent(V)|
\le
\frac12\,c_{r,\varepsilon/2}.
\]
This is possible by Lemma~\ref{lem:entropy-L1-continuity}.

Now apply Lemma~\ref{lem:graphon-Kr-removal} with this $\eta$.
We obtain $\delta_{r,\varepsilon}>0$ such that every graphon $W$ with
\[
t(K_r,W)\le \delta_{r,\varepsilon}
\]
admits a graphon $W'$ of zero $K_r$-density satisfying
\[
\|W-W'\|_1\le \eta.
\]

Suppose that
\[
t(K_r,W)\le \delta_{r,\varepsilon}
\quad \text{and}\quad
\delta_\square(W,B_q^\star)\ge \varepsilon.
\]
Choose a corresponding graphon $W'$ such that $\|W-W'\|_1\le \eta$.
Since
\[
\delta_\square(W,W')\le \|W-W'\|_\square\le \|W-W'\|_1\le \eta,
\]
the triangle inequality gives
\[
\delta_\square(W',B_q^\star)
\ge
\delta_\square(W,B_q^\star)-\delta_\square(W,W')
\ge
\varepsilon-\eta
\ge
\frac{\varepsilon}{2}.
\]
So Lemma~\ref{lem:gap-Kr} implies
\[
\Ent(W')
\le
\frac{\ln 2}{2}\Bigl(1-\frac1q\Bigr)-c_{r,\varepsilon/2}.
\]
Also,
\[
|\Ent(W)-\Ent(W')|\le \frac12\,c_{r,\varepsilon/2}
\]
by our choice of $\eta$.
Therefore
\[
\Ent(W)
\le
\Ent(W')+\frac12\,c_{r,\varepsilon/2}
\le
\frac{\ln 2}{2}\Bigl(1-\frac1q\Bigr)-\frac12\,c_{r,\varepsilon/2}.
\]
Thus the lemma holds with
\[
c'_{r,\varepsilon}\coloneqq \frac12\,c_{r,\varepsilon/2}>0.
\]
\end{proof}

\section{Proof of Theorem~\ref{thm:main-clique}}\label{sec:free-energy}

\noindent
Recall that we have fixed $r\ge 3$ and $w > 0$ and we continue to let $q :=r-1$.
We first prove some technical results in sections~\ref{A general vanishing lemma}
--~\ref{Counting cut-closed families}.
Then we prove parts (i), (ii), and (iii) of Theorem~\ref{thm:main-clique}, in this order,
in sections~\ref{The reduced free-energy limit}
--~\ref{Transforming cut-distance closeness to a vertex partition}.

\begin{notation}[Balanced $q$-partite entropy level]\label{not:sq}
We write
\begin{equation}\label{eq:sq-definition}
s_q
\coloneqq
\frac{\ln 2}{2}\left(1-\frac1q\right).
\end{equation}
\end{notation}

Corollary~\ref{cor:ERGM} shows that the graph-independent factor $e^{wn^r}$
disappears after normalization. Thus the probability measure $\mbbP_{n,r}$ is determined by
the reduced masses
\[
\exp\bigl(-r!\,w\,K_r(G)\bigr),
\qquad G\in\mbG_n.
\]
By Lemma~\ref{lem:empirical-Kr-density},
\[
t(K_r,W_G)=\frac{r!\,K_r(G)}{n^r}.
\]
Consequently,
\begin{equation}\label{eq:reduced-mass-graphon}
\exp\bigl(-r!\,w\,K_r(G)\bigr)
=
\exp\bigl(-w\,t(K_r,W_G)n^r\bigr).
\end{equation}
This identity connects the finite-graph penalty with the graphon arguments in this section.

\subsection{A general vanishing lemma}\label{A general vanishing lemma}

The next lemma records the counting estimate that the
graphon $K_r$-homomorphism density
$t(K_r,W_{\mathsf G_n})=r!\,K_r(\mathsf G_n)/n^r$ goes to zero in probability for every fixed $r\ge 3$ and
every fixed $w>0$, where $\mathsf G_n\sim\mbbP_{n,r}$.

\begin{lem}\label{lem:Kr-density-vanishes}
Fix $r\ge 3$ and $w>0$.
For every $\varepsilon>0$ and every $n\in\mbbN$, define
\[
\mc A_{n,\varepsilon}^{(r)}
\coloneqq
\{\,G\in\mbG_n:\ K_r(G)\ge \varepsilon n^r\,\}.
\]
Then
\[
\mbbP_{n,r}\bigl(\mc A_{n,\varepsilon}^{(r)}\bigr)
\le
|\mbG_n|\,e^{-r!\,w\,\varepsilon n^r}
=
\exp\Bigl(\binom{n}{2}\ln 2-r!\,w\,\varepsilon n^r\Bigr).
\]
In particular, for every $\varepsilon>0$,
\[
\lim_{n\to\infty}
\mbbP_{n,r}\bigl(\mc A_{n,\varepsilon}^{(r)}\bigr)
=
0.
\]
Equivalently, if $\mathsf G_n\sim\mbbP_{n,r}$, then for every $\delta>0$,
\[
\lim_{n\to\infty}
\mbbP\left(
t(K_r,W_{\mathsf G_n})\ge\delta
\right)
=
0,
\]
because this event is
$\{\mathsf G_n\in\mc A_{n,\delta/r!}^{(r)}\}$. Thus
$t(K_r,W_{\mathsf G_n})$ converges to $0$ in probability.
\end{lem}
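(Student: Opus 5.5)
The bound will come directly from the explicit form of $\mbbP_{n,r}$ in Corollary~\ref{cor:ERGM}, by comparing the total reduced mass of $\mc A_{n,\varepsilon}^{(r)}$ with that of a single clique-free graph.

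First, every $G\in\mc A_{n,\varepsilon}^{(r)}$ has reduced mass
\[
\exp\bigl(-r!\,w\,K_r(G)\bigr)\le e^{-r!\,w\,\varepsilon n^r},
\]
since $w>0$ and $K_r(G)\ge\varepsilon n^r$. Summing over at most $|\mbG_n|=2^{\binom n2}$ such graphs bounds the numerator of \eqref{eq:showing-ergm} by $|\mbG_n|e^{-r!w\varepsilon n^r}$.

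Second, the denominator satisfies $\widetilde Z_{n,r}\ge 1$. Indeed, the empty graph (or any $K_r$-free graph) has $K_r=0$ and hence reduced mass $1$, and every other term in the sum is positive.

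Dividing the numerator bound by the denominator bound gives
\[
\mbbP_{n,r}(\mc A_{n,\varepsilon}^{(r)})\le |\mbG_n|\,e^{-r!w\varepsilon n^r}=\exp\Bigl(\binom n2\ln2-r!w\varepsilon n^r\Bigr).
\]

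For the limit, note that $r\ge3$ gives $n^r/n^2\to\infty$. So the exponent is at most $n^2\bigl(\tfrac{\ln 2}{2}-r!w\varepsilon n^{r-2}\bigr)\to-\infty$, and the probability tends to $0$.

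For the equivalent formulation, Lemma~\ref{lem:empirical-Kr-density} gives $t(K_r,W_G)\ge\delta$ if and only if $K_r(G)\ge(\delta/r!)n^r$. That event is exactly $\{\mathsf G_n\in\mc A_{n,\delta/r!}^{(r)}\}$, and applying the bound above with $\varepsilon=\delta/r!$ yields convergence in probability to $0$.

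There is no real obstacle here. The only point requiring care is the trivial lower bound on $\widetilde Z_{n,r}$, which is what makes the crude union bound sufficient.
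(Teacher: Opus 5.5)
Your proposal is correct and follows essentially the same route as the paper. You bound each reduced mass on $\mathcal A_{n,\varepsilon}^{(r)}$ by $e^{-r!\,w\,\varepsilon n^r}$, use $\widetilde Z_{n,r}\ge 1$ from the empty graph, and compare the $n^r$ penalty with the $O(n^2)$ count. Your explicit reduction of the $t(K_r,W_{\mathsf G_n})\ge\delta$ formulation via Lemma~\ref{lem:empirical-Kr-density} is what the statement itself indicates.
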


\begin{proof}
By \eqref{eq:showing-ergm},
\[
\mbbP_{n,r}(G)=\frac{e^{-r!\,w\,K_r(G)}}{\widetilde Z_{n,r}}.
\]
The denominator is at least $1$, because a graph with no edges has no copy of $K_r$ and contributes $1$.
So for every event $\mc A\subseteq \mbG_n$,
\[
\mbbP_{n,r}(\mc A)
=
\sum_{G\in\mc A}\mbbP_{n,r}(G)
\le
\sum_{G\in\mc A}e^{-r!\,w\,K_r(G)}.
\]
Apply this with $\mc A=\mc A_{n,\varepsilon}^{(r)}$.
On this event we have $K_r(G)\ge \varepsilon n^r$, hence
\[
\mbbP_{n,r}\bigl(\mc A_{n,\varepsilon}^{(r)}\bigr)
\le
|\mc A_{n,\varepsilon}^{(r)}|\,e^{-r!\,w\,\varepsilon n^r}
\le
|\mbG_n|\,e^{-r!\,w\,\varepsilon n^r}.
\]
Since $|\mbG_n|=2^{\binom n2}$, we have
\[
|\mbG_n|\,e^{-r!\,w\,\varepsilon n^r}
=
\exp\Bigl(\binom n2\ln 2-r!\,w\,\varepsilon n^r\Bigr).
\]
Because $r\ge 3$, the term $-r!\,w\,\varepsilon n^r$ is of strictly higher order than
$\binom n2\ln 2=O(n^2)$, so the exponent has limit $-\infty$.
\end{proof}

\subsection{Graphs with a fixed positive clique density}

A fixed positive lower bound on $t(K_r,W_G)$ gives a negative term of order $n^r$ in
the exponent. The total number of labeled graphs is only $\exp(O(n^2))$. Since
$r\ge3$, the penalty is stronger than this counting term.

\begin{lem}[Fixed positive clique density is negligible]
\label{lem:large-Kr-density-negligible}
Fix $0<\delta\le1$. For every $M>0$, there exists
$N=N(M,\delta,r,w)$ such that, for every $n\ge N$,
\begin{equation}\label{eq:large-Kr-density-negligible}
\sum_{\substack{G\in\mbG_n\\ t(K_r,W_G)\ge\delta}}
\exp\bigl(-r!\,w\,K_r(G)\bigr)
\le
\exp(-Mn^2).
\end{equation}
\end{lem}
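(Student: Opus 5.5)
The plan is to bound each summand uniformly and then multiply by the total number of graphs, exactly as in the proof of Lemma~\ref{lem:Kr-density-vanishes}. First I will use Lemma~\ref{lem:empirical-Kr-density}, or equivalently \eqref{eq:reduced-mass-graphon}. For every $G\in\mbG_n$ with $t(K_r,W_G)\ge\delta$ this gives
\[
\exp\bigl(-r!\,w\,K_r(G)\bigr)=\exp\bigl(-w\,t(K_r,W_G)\,n^r\bigr)\le \exp\bigl(-w\delta n^r\bigr).
\]
The number of summands is at most $|\mbG_n|=2^{\binom n2}\le \exp\bigl(\tfrac{\ln 2}{2}n^2\bigr)$. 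Hence the left-hand side of \eqref{eq:large-Kr-density-negligible} is at most
\[
\exp\Bigl(\frac{\ln 2}{2}\,n^2-w\delta n^r\Bigr).
\]

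It then suffices to choose $N$ so that for all $n\ge N$,
\[
\frac{\ln 2}{2}\,n^2-w\delta n^r\le -Mn^2 .
\]
Dividing by $n^2$, this is equivalent to
\[
w\delta\, n^{r-2}\ge M+\frac{\ln 2}{2}.
\]
Since $r\ge3$, we have $n^{r-2}\ge n$. So it is enough to take
\[
N(M,\delta,r,w)\coloneqq \Bigl\lceil \frac{M+\tfrac{\ln 2}{2}}{w\delta}\Bigr\rceil ,
\]
and for $n\ge N$ the required inequality follows by monotonicity in $n$.

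There is no real obstacle. The only point to check is that the exponent $n^r$ strictly dominates $n^2$, which uses $r\ge3$ and $w,\delta>0$. The bound $\delta\le 1$ plays no role beyond making the hypothesis nonvacuous. The case where the sum is empty is trivial, since then the left-hand side is $0$.
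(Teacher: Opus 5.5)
Your proposal is correct and follows essentially the same route as the paper's proof: bound each summand by $\exp(-w\delta n^r)$ via \eqref{eq:reduced-mass-graphon} and multiply by $|\mbG_n|=2^{\binom n2}$. The only difference is cosmetic. The paper shows the normalized exponent tends to $-\infty$, whereas you give an explicit threshold $N=\bigl\lceil (M+\tfrac{\ln 2}{2})/(w\delta)\bigr\rceil$ using $n^{r-2}\ge n$, and that threshold is valid.
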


\begin{proof}
If $t(K_r,W_G)\ge\delta$, then \eqref{eq:reduced-mass-graphon} gives
\[
\exp\bigl(-r!\,w\,K_r(G)\bigr)
\le
\exp(-w\delta n^r).
\]
Since $|\mbG_n|=2^{\binom n2}$, the total contribution of graphs satisfying
$t(K_r,W_G)\ge\delta$ is bounded by
\begin{equation}
\label{eq:large-Kr-density-total-bound}
\sum_{\substack{G\in\mbG_n\\ t(K_r,W_G)\ge\delta}}
\exp\bigl(-r!\,w\,K_r(G)\bigr)
\le
\exp(E_n),
\end{equation}
where
\begin{equation}
\label{eq:large-Kr-density-exponent}
E_n
\coloneqq
\binom n2\ln 2-w\delta n^r.
\end{equation}

We now compare $E_n$ with the $n^2$ scale. By
\eqref{eq:large-Kr-density-exponent},
\[
\frac{E_n}{n^2}
=
\frac{\binom n2}{n^2}\ln 2
-
w\delta n^{r-2}.
\]
Since
\[
\frac{\binom n2}{n^2}
=
\frac{n(n-1)}{2n^2}
=
\frac12-\frac{1}{2n},
\]
we get
\begin{equation}
\label{eq:large-Kr-density-normalized-exponent}
\frac{E_n}{n^2}
=
\frac{\ln 2}{2}
-
\frac{\ln 2}{2n}
-
w\delta n^{r-2}.
\end{equation}
Because $r\ge3$ and $w\delta>0$,
\[
\lim_{n\to\infty}
\left(
\frac{\ln 2}{2}
-
\frac{\ln 2}{2n}
-
w\delta n^{r-2}
\right)
=
-\infty.
\]
Therefore, by \eqref{eq:large-Kr-density-normalized-exponent}, there is
$N=N(M,\delta,r,w)$ such that, for every $n\ge N$,
\[
\frac{E_n}{n^2}\le -M.
\]
Equivalently,
\[
E_n\le -Mn^2.
\]
Combining this with \eqref{eq:large-Kr-density-total-bound}, we obtain, for
every $n\ge N$,
\[
\sum_{\substack{G\in\mbG_n\\ t(K_r,W_G)\ge\delta}}
\exp\bigl(-r!\,w\,K_r(G)\bigr)
\le
\exp(E_n)
\le
\exp(-Mn^2).
\]
This proves the lemma.
\end{proof}

Thus graphs with $t(K_r,W_G)\ge\delta$ have negligible total reduced mass on the
$n^2$ scale, for every fixed $\delta>0$. The main contribution must come from graphs
with small empirical $K_r$-density. Lemma~\ref{lem:boundary-max-Kr} identifies the
largest entropy at zero $K_r$-density, and Lemma~\ref{lem:near-gap-Kr} gives a strict
entropy loss away from $B_q^\star$ when the $K_r$-density is sufficiently small.

\subsection{Counting cut-closed families}\label{Counting cut-closed families}

We next convert entropy bounds for graphons into counting bounds for labeled graphs.

\begin{defin}[Cut-closed family of graphons]\label{def:cut-closed}
A family $\mc F\subseteq\mc W$ is called \emph{cut-closed} if, whenever
$W_m\in\mc F$ for every $m$ and
\[
\lim_{m\to\infty}\delta_\square(W_m,W)=0,
\]
then $W\in\mc F$.
\end{defin}

\begin{lem}\label{lem:cut-closed-saturation}
Let $\mc F\subseteq\mc W$ be cut-closed, and let $\widetilde{\mc F}$ be its image in
$\widetilde{\mc W}$. Then the following statements hold.
\begin{enumerate}
\item If $W\in\mc F$, $V\in\mc W$, and $\delta_\square(W,V)=0$, then $V\in\mc F$.
\item For every $n\in\mbbN$ and every $G\in\mbG_n$,
\[
W_G\in\mc F
\quad\Longleftrightarrow\quad
\widetilde W_G\in\widetilde{\mc F}.
\]
\item The set $\widetilde{\mc F}$ is closed in the metric $d_\square$.
\end{enumerate}
\end{lem}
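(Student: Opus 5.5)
The three statements of Lemma~\ref{lem:cut-closed-saturation} follow almost directly from Definition~\ref{def:cut-closed} and the fact that $d_\square$ on $\widetilde{\mc W}$ is induced by $\delta_\square$. We prove them in order, each one using the previous.

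For (1), let $W\in\mc F$ and $\delta_\square(W,V)=0$. Take the constant sequence $W_m\coloneqq W$ for every $m$. Then $W_m\in\mc F$ and $\delta_\square(W_m,V)=0\to0$. Cut-closedness gives $V\in\mc F$. So $\mc F$ is saturated with respect to the equivalence $\sim$ of Notation~\ref{not:reduced-graphon-space}.

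For (2), the forward implication is immediate from $\widetilde{\mc F}=\pi(\mc F)$. For the converse, suppose $\widetilde W_G\in\widetilde{\mc F}$. Then $\pi(W_G)=\pi(W)$ for some $W\in\mc F$, which means $\delta_\square(W,W_G)=0$. Part (1) then gives $W_G\in\mc F$. Nothing in this argument is specific to empirical graphons: it shows $\pi^{-1}(\widetilde{\mc F})=\mc F$ for every cut-closed $\mc F$.

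For (3), let $\widetilde U_m\in\widetilde{\mc F}$ with $d_\square(\widetilde U_m,\widetilde U)\to0$ for some $\widetilde U\in\widetilde{\mc W}$.
\begin{enumerate}
\item By part (2) in its general form, pick representatives $U_m\in\mc F$ of $\widetilde U_m$, and any representative $U$ of $\widetilde U$.
\item By definition of $d_\square$, $\delta_\square(U_m,U)=d_\square(\widetilde U_m,\widetilde U)\to0$, since this value does not depend on the representatives chosen.
\item Cut-closedness gives $U\in\mc F$, hence $\widetilde U=\pi(U)\in\widetilde{\mc F}$.
\end{enumerate}
Since $(\widetilde{\mc W},d_\square)$ is a metric space, sequential closedness is the same as closedness.

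The only point needing care is representative independence: that $\delta_\square(U_m,U)$ equals $d_\square(\widetilde U_m,\widetilde U)$ for arbitrary representatives. This was already established in Notation~\ref{not:reduced-graphon-space}, so we expect no real obstacle here.
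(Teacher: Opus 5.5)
Your proposal is correct and follows the paper's proof essentially step for step: the constant sequence for (1), the definition of the image together with (1) for (2), and, for (3), choosing representatives in $\mc F$ and using that $d_\square$ does not depend on the representatives. Strictly speaking, in (3) you only need $\widetilde U_m\in\pi(\mc F)$ to pick $U_m\in\mc F$, so invoking part (2) there is unnecessary but harmless.
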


\begin{proof}
For the first statement, take the constant sequence $W_m=W$. Then
\[
\lim_{m\to\infty}\delta_\square(W_m,V)=0,
\]
so cut-closedness gives $V\in\mc F$.

The forward implication in the second statement follows from the definition of
$\widetilde{\mc F}$. Conversely, suppose that
$\widetilde W_G\in\widetilde{\mc F}$. Then some $V\in\mc F$ satisfies
$\delta_\square(W_G,V)=0$. The first statement gives $W_G\in\mc F$.

For the third statement, let $\widetilde W_m\in\widetilde{\mc F}$ and suppose that
\[
\lim_{m\to\infty}d_\square(\widetilde W_m,\widetilde W)=0.
\]
Choose a representative $W$ of $\widetilde W$. For each $m$, choose a representative
$V_m\in\mc F$ of $\widetilde W_m$. By the definition of $d_\square$,
\[
\delta_\square(V_m,W)
=
d_\square(\widetilde W_m,\widetilde W).
\]
Hence
\[
\lim_{m\to\infty}\delta_\square(V_m,W)=0.
\]
Cut-closedness gives $W\in\mc F$, and therefore
$\widetilde W\in\widetilde{\mc F}$.
\end{proof}

\begin{notation}[Uniform measure on labeled graphs]
\label{not:uniform-graph-measure}
For each $n\in\mbbN$, let $\mbbP_n^{\mathrm{unif}}$ be the uniform
probability measure on $\mbG_n$. Since $\mbG_n$ contains
$2^{\binom n2}$ graphs, we have
\begin{equation}\label{eq:uniform-graph-measure}
\mbbP_n^{\mathrm{unif}}(A)
=
2^{-\binom n2}|A|,
\qquad
A\subseteq\mbG_n.
\end{equation}

We consider $\mbG_n$ as a probability space equipped with
$\mbbP_n^{\mathrm{unif}}$, and let $\mathsf R_n$ be the canonical random
graph on this space; thus
\[
\mathsf R_n(G)=G
\qquad\text{for every }G\in\mbG_n.
\]
Equivalently, each possible edge is present independently with probability
$\frac12$. Hence
\[
\mathsf R_n\sim G\left(n,\frac12\right).
\]

For every $\widetilde{\mc A}\subseteq\widetilde{\mc W}$, the event
$\widetilde W_{\mathsf R_n}\in\widetilde{\mc A}$ consists exactly of the
graphs whose empirical graphon class belongs to $\widetilde{\mc A}$. Hence
\begin{equation}\label{eq:uniform-graph-event}
\mbbP_n^{\mathrm{unif}}
\bigl(\widetilde W_{\mathsf R_n}\in\widetilde{\mc A}\bigr)
=
\mbbP_n^{\mathrm{unif}}
\bigl(
\{G\in\mbG_n:\widetilde W_G\in\widetilde{\mc A}\}
\bigr).
\end{equation}
\end{notation}

\begin{theor}
\label{thm:CV-closed-upper}
For a graphon $W\in\mc W$, define
\begin{equation}\label{eq:CV-rate-half}
I_{\frac12}(W)
\coloneqq
\frac12\int_{[0,1]^2}
\left[
W\ln(2W)+(1-W)\ln\bigl(2(1-W)\bigr)
\right]\,dx\,dy,
\end{equation}
where $0\ln0=0$.

The functional $I_{\frac12}$ is constant on zero-cut-distance classes.
Thus, if $\widetilde W=\pi(W)$, then
\[
I_{\frac12}(\widetilde W)
\coloneqq
I_{\frac12}(W)
\]
is well-defined. Moreover,
\begin{equation}\label{eq:CV-rate-entropy}
I_{\frac12}(\widetilde W)
=
\frac{\ln 2}{2}-\Ent(\widetilde W).
\end{equation}

If $\widetilde{\mc F}\subseteq\widetilde{\mc W}$ is closed, then
\begin{equation}\label{eq:CV-closed-upper-bound}
\limsup_{n\to\infty}
\frac1{n^2}
\ln
\mbbP_n^{\mathrm{unif}}
\left(
\left\{
G\in\mbG_n:
\widetilde W_G\in\widetilde{\mc F}
\right\}
\right)
\le
-\inf_{\widetilde W\in\widetilde{\mc F}}
I_{\frac12}(\widetilde W).
\end{equation}
Here $\inf\varnothing=+\infty$, $\sup\varnothing=-\infty$, and $\ln0=-\infty$.
\end{theor}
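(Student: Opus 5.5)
This theorem is essentially the Chatterjee--Varadhan large deviation principle for $G(n,\tfrac12)$ in the reduced graphon space. The plan is to verify the two algebraic claims directly and then import the closed-set upper bound from \cite{ChatterjeeVaradhan2011}.

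\emph{Algebraic claims.} First I show that $I_{\frac12}$ is constant on zero-cut-distance classes. By Remark~\ref{rem:zero-cut-distance}, if $\delta_\square(U,V)=0$ there are measure-preserving maps $\phi,\psi$ with $U^\phi=V^\psi$ almost everywhere. The integrand of $I_{\frac12}$ is a pointwise function of $W(x,y)$, and $\phi\times\phi$ and $\psi\times\psi$ preserve $\lambda^2$. Hence $I_{\frac12}(U)=I_{\frac12}(U^\phi)=I_{\frac12}(V^\psi)=I_{\frac12}(V)$, exactly as in Lemma~\ref{lem:entropy-zero-cut}.

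Next, \eqref{eq:CV-rate-entropy} follows from the pointwise identity
\[
p\ln(2p)+(1-p)\ln(2(1-p))=\ln 2-h(p),\qquad p\in[0,1],
\]
with the convention $0\ln0=0$. This identity holds because $p\ln 2+(1-p)\ln 2=\ln 2$. Integrating it and multiplying by $\tfrac12$ gives $I_{\frac12}(W)=\frac{\ln2}{2}-\Ent(W)$. Lemma~\ref{lem:entropy-zero-cut} then shows this identity is also well defined on classes.

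\emph{Upper bound.} For \eqref{eq:CV-closed-upper-bound} I will invoke the large deviation principle of Chatterjee and Varadhan \cite[Thm.~2.3]{ChatterjeeVaradhan2011}. It says that for $\mathsf H_n\sim G(n,p)$ the law of $\widetilde W_{\mathsf H_n}$ on $(\widetilde{\mc W},d_\square)$ satisfies an LDP at speed $n^2$ with rate function
\[
I_p(W)=\tfrac12\int\bigl[W\ln\tfrac{W}{p}+(1-W)\ln\tfrac{1-W}{1-p}\bigr].
\]
Setting $p=\tfrac12$ gives \eqref{eq:CV-rate-half}. By Notation~\ref{not:uniform-graph-measure}, $\mathsf R_n\sim G(n,\tfrac12)$, and \eqref{eq:uniform-graph-event} identifies the probability on the left-hand side with $\mbbP(\widetilde W_{\mathsf R_n}\in\widetilde{\mc F})$. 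The closed-set half of the LDP then gives the claimed $\limsup$ bound. The empty-set conventions need a separate check: if $\widetilde{\mc F}=\varnothing$, the probability is $0$, so both sides equal $-\infty$.

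\emph{Main obstacle.} The main work is matching conventions rather than new mathematics. I need to check two things.

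First, the normalization must agree. Chatterjee and Varadhan use the speed $n^2$ together with the factor $\tfrac12$ in the rate function, which corresponds to $\binom n2\approx n^2/2$ independent edges. I would double-check this by the sanity case $\widetilde{\mc F}=\widetilde{\mc W}$: the infimum of $I_{\frac12}$ is $0$, attained at $W\equiv\tfrac12$, consistent with probability $1$.

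Second, their empirical graphon and quotient space must coincide with $W_G$ and $\widetilde{\mc W}$ here. Their empirical graphon is defined on the same grid of intervals $I_i$, and their space is the same quotient; see Remark~\ref{rem:zero-cut-distance}.

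If one preferred a self-contained argument, the fallback would be the standard route. Cover the compact set $\widetilde{\mc F}$ by finitely many small $d_\square$-balls. Bound each ball's probability via the weak regularity lemma and a counting argument over step graphons: on a fixed partition, the count of graphs with prescribed block densities is bounded by $\exp(n^2\Ent+o(n^2))$. Finally, use lower semicontinuity of $I_{\frac12}$ in $d_\square$. I would not redo this, and would cite the theorem instead.
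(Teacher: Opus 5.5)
Your proposal is correct and follows the paper's proof. The paper also derives \eqref{eq:CV-rate-entropy} from the pointwise identity $u\ln(2u)+(1-u)\ln(2(1-u))=\ln 2-h(u)$. It then imports the closed-set bound from Chatterjee--Varadhan's Theorem~2.3 with $p=\tfrac12$, after checking that their empirical graphon agrees a.e.\ with $W_G$ and that their reduced space is $(\widetilde{\mc W},d_\square)$. The only cosmetic difference is that you prove class-invariance of $I_{\frac12}$ directly via measure-preserving maps, whereas the paper deduces it from the identity together with Lemma~\ref{lem:entropy-zero-cut}.
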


\begin{proof}
Recall that the binary entropy function is defined by
\[
h(u)= -u\ln u-(1-u)\ln(1-u),
\qquad u\in[0,1],
\]
with the convention $0\ln 0=0$. For $0<u<1$, we have
\[
\begin{aligned}
u\ln(2u)+(1-u)\ln\bigl(2(1-u)\bigr)
&=
u\bigl(\ln 2+\ln u\bigr)
+(1-u)\bigl(\ln 2+\ln(1-u)\bigr)\\
&=
\bigl(u+(1-u)\bigr)\ln 2
+u\ln u+(1-u)\ln(1-u)\\
&=
\ln 2-h(u).
\end{aligned}
\]
The same identity holds for $u=0$ and $u=1$ under the convention
$0\ln 0=0$. Hence it holds for every $u\in[0,1]$.
Integrating this identity gives
\[
I_{\frac12}(W)
=
\frac{\ln 2}{2}-\Ent(W).
\]
By Lemma~\ref{lem:entropy-zero-cut}, entropy is unchanged under zero cut
distance. Hence $I_{\frac12}$ is also unchanged under zero cut distance,
so $I_{\frac12}(\widetilde W)$ is well-defined and
\eqref{eq:CV-rate-entropy} follows.

Under the measure $\mbbP_n^{\mathrm{unif}}$ from
Notation~\ref{not:uniform-graph-measure}, the random graph
$\mathsf R_n$ has distribution $G(n,\frac12)$. We may therefore apply
\cite[Thm.~2.3]{ChatterjeeVaradhan2011} with $p=\frac12$.
The graphon associated with a finite graph in
\cite[Eq.~(4)]{ChatterjeeVaradhan2011} agrees almost everywhere with the
empirical graphon from Definition~\ref{def:empirical-graphon}. By
Remark~\ref{rem:zero-cut-distance}, the reduced graphon space used here is
the same reduced cut-metric space as the one used in
\cite{ChatterjeeVaradhan2011}. Finally, for $p=\frac12$, the rate
function in \cite[Eq.~(7)]{ChatterjeeVaradhan2011} is exactly
\eqref{eq:CV-rate-half}. The closed-set upper bound in
\cite[Thm.~2.3]{ChatterjeeVaradhan2011}, together with
\eqref{eq:uniform-graph-event}, therefore gives
\eqref{eq:CV-closed-upper-bound}.
\end{proof}

\begin{lem}
\label{lem:CV-counting-upper}
Let $\widetilde{\mc F}\subseteq\widetilde{\mc W}$ be closed.
For every $n\in\mbbN$,
\begin{equation}\label{eq:CV-closed-counting-identity}
\begin{aligned}
\mbbP_n^{\mathrm{unif}}
\left(
\left\{
G\in\mbG_n:
\widetilde W_G\in\widetilde{\mc F}
\right\}
\right)
&=
2^{-\binom n2}
\left|
\left\{
G\in\mbG_n:
\widetilde W_G\in\widetilde{\mc F}
\right\}
\right|.
\end{aligned}
\end{equation}

Consequently,
\begin{equation}\label{eq:CV-closed-counting-upper}
\limsup_{n\to\infty}
\frac1{n^2}
\ln
\left|
\left\{
G\in\mbG_n:
\widetilde W_G\in\widetilde{\mc F}
\right\}
\right|
\le
\sup_{\widetilde W\in\widetilde{\mc F}}
\Ent(\widetilde W).
\end{equation}

If $\widetilde{\mc F}\neq\varnothing$, then for every $\gamma>0$ there is
$N=N(\widetilde{\mc F},\gamma)$ such that, for every $n\ge N$,
\begin{equation}\label{eq:CV-finite-counting-upper}
\left|
\left\{
G\in\mbG_n:
\widetilde W_G\in\widetilde{\mc F}
\right\}
\right|
\le
\exp\left(
\left[
\sup_{\widetilde W\in\widetilde{\mc F}}
\Ent(\widetilde W)+\gamma
\right]n^2
\right).
\end{equation}
\end{lem}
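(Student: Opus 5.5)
The identity \eqref{eq:CV-closed-counting-identity} is immediate from \eqref{eq:uniform-graph-measure} in Notation~\ref{not:uniform-graph-measure}. Take $A$ to be the set of graphs $G\in\mbG_n$ with $\widetilde W_G\in\widetilde{\mc F}$; it is a finite set, so there are no measurability issues.

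For \eqref{eq:CV-closed-counting-upper}, take logarithms in the identity:
\[
\frac1{n^2}\ln\bigl|\{G:\widetilde W_G\in\widetilde{\mc F}\}\bigr|
=\frac{\binom n2\ln 2}{n^2}+\frac1{n^2}\ln\mbbP_n^{\mathrm{unif}}\bigl(\{G:\widetilde W_G\in\widetilde{\mc F}\}\bigr).
\]
The first term converges to $\frac{\ln 2}{2}$. Since it has a genuine limit, the limsup of the sum is that limit plus the limsup of the second term. Theorem~\ref{thm:CV-closed-upper} bounds the second limsup by $-\inf_{\widetilde{\mc F}} I_{\frac12}$. By \eqref{eq:CV-rate-entropy},
\[
-\inf_{\widetilde{\mc F}} I_{\frac12}=-\frac{\ln 2}{2}+\sup_{\widetilde{\mc F}}\Ent.
\]
Adding the two contributions, the $\frac{\ln 2}{2}$ terms cancel and we get \eqref{eq:CV-closed-counting-upper}.

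The bookkeeping for the degenerate cases needs care. If $\widetilde{\mc F}=\varnothing$, both sides are $-\infty$ under the stated conventions, since $\ln 0=-\infty$ and $\sup\varnothing=-\infty$. If the counted set is empty for some $n$, the left side is $-\infty$ at that $n$, which is harmless. The supremum of $\Ent$ is finite, lying in $[0,\frac{\ln 2}{2}]$, whenever $\widetilde{\mc F}\neq\varnothing$, so no $\infty-\infty$ arises.

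For \eqref{eq:CV-finite-counting-upper}, assume $\widetilde{\mc F}\neq\varnothing$, so $s:=\sup_{\widetilde{\mc F}}\Ent$ is a finite real number. By the definition of limsup, for each $\gamma>0$ there is $N$ such that, for all $n\ge N$, the quantity $\frac1{n^2}\ln|\cdot|$ is at most $s+\gamma$. Here $N$ depends only on $\widetilde{\mc F}$ and $\gamma$, and indices where the set is empty satisfy the bound trivially. Exponentiating gives the claim.

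The only real content is the large deviation upper bound, which is already packaged in Theorem~\ref{thm:CV-closed-upper}. The main point to watch is therefore the conventions for $-\infty$, rather than any analytic estimate.
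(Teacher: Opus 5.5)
Your proposal is correct and follows the paper's proof essentially step for step. You read the identity off the definition of the uniform measure, take logarithms and apply the closed-set upper bound of Theorem~\ref{thm:CV-closed-upper} together with $I_{\frac12}=\frac{\ln 2}{2}-\Ent$, and then unpack the limsup and exponentiate; your explicit handling of the empty and $-\infty$ cases is consistent with the paper's stated conventions.
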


\begin{proof}
Equation~\eqref{eq:CV-closed-counting-identity} follows immediately from
the definition \eqref{eq:uniform-graph-measure}.

Set
\[
N_n(\widetilde{\mc F})
\coloneqq
\left|
\left\{
G\in\mbG_n:
\widetilde W_G\in\widetilde{\mc F}
\right\}
\right|.
\]
Using \eqref{eq:CV-closed-counting-identity}, with $\ln0=-\infty$ when
$N_n(\widetilde{\mc F})=0$, we obtain
\[
\frac1{n^2}\ln N_n(\widetilde{\mc F})
=
\frac{\binom n2}{n^2}\ln 2
+
\frac1{n^2}
\ln
\mbbP_n^{\mathrm{unif}}
\left(
\left\{
G\in\mbG_n:
\widetilde W_G\in\widetilde{\mc F}
\right\}
\right).
\]
Taking the upper limit and applying
\eqref{eq:CV-closed-upper-bound} gives
\[
\limsup_{n\to\infty}
\frac1{n^2}\ln N_n(\widetilde{\mc F})
\le
\frac{\ln 2}{2}
-
\inf_{\widetilde W\in\widetilde{\mc F}}
I_{\frac12}(\widetilde W).
\]
By \eqref{eq:CV-rate-entropy}, the right-hand side equals
\[
\sup_{\widetilde W\in\widetilde{\mc F}}
\Ent(\widetilde W).
\]
This proves \eqref{eq:CV-closed-counting-upper}.

Finally, suppose that $\widetilde{\mc F}\neq\varnothing$ and fix
$\gamma>0$. By \eqref{eq:CV-closed-counting-upper}, for all sufficiently
large $n$,
\[
\frac1{n^2}\ln N_n(\widetilde{\mc F})
\le
\sup_{\widetilde W\in\widetilde{\mc F}}
\Ent(\widetilde W)+\gamma.
\]
Exponentiating proves \eqref{eq:CV-finite-counting-upper}.
\end{proof}

\begin{lem}\label{lem:entropy-counting}
Let $\mc F\subseteq\mc W$ be a nonempty cut-closed family. For every $\gamma>0$, there
exists $N=N(\mc F,\gamma)$ such that, for every $n\ge N$,
\begin{equation}\label{eq:entropy-counting-finite-upper}
\left|
\left\{
G\in\mbG_n:\ W_G\in\mc F
\right\}
\right|
\le
\exp\left(
\left[
\sup_{W\in\mc F}\Ent(W)+\gamma
\right]n^2
\right).
\end{equation}
Equivalently, there is a nonnegative sequence $\alpha_n(\mc F)$ satisfying
\[
\lim_{n\to\infty}\alpha_n(\mc F)=0
\]
such that
\begin{equation}\label{eq:entropy-counting-o-upper}
\left|
\left\{
G\in\mbG_n:\ W_G\in\mc F
\right\}
\right|
\le
\exp\left(
\left[
\sup_{W\in\mc F}\Ent(W)+\alpha_n(\mc F)
\right]n^2
\right)
\end{equation}
for every sufficiently large $n$.
\end{lem}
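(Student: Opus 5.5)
The plan is to reduce the statement to Lemma~\ref{lem:CV-counting-upper} by passing to the reduced space. Set $\widetilde{\mc F}\coloneqq\pi(\mc F)\subseteq\widetilde{\mc W}$. This set is nonempty because $\mc F$ is. By Lemma~\ref{lem:cut-closed-saturation}(3), $\widetilde{\mc F}$ is closed in $d_\square$. By Lemma~\ref{lem:cut-closed-saturation}(2), for every $n$ and every $G\in\mbG_n$ we have $W_G\in\mc F$ if and only if $\widetilde W_G\in\widetilde{\mc F}$. Hence the two counting sets coincide:
\[
\{G\in\mbG_n:\ W_G\in\mc F\}=\{G\in\mbG_n:\ \widetilde W_G\in\widetilde{\mc F}\}.
\]

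Next I identify the suprema. By Lemma~\ref{lem:entropy-zero-cut}, $\Ent$ is well defined on reduced graphons. Every $\widetilde W\in\widetilde{\mc F}$ has a representative in $\mc F$, and every $W\in\mc F$ maps into $\widetilde{\mc F}$. It follows that
\[
\sup_{\widetilde W\in\widetilde{\mc F}}\Ent(\widetilde W)=\sup_{W\in\mc F}\Ent(W).
\]
Applying \eqref{eq:CV-finite-counting-upper} from Lemma~\ref{lem:CV-counting-upper} to $\widetilde{\mc F}$ with the given $\gamma$ then yields $N$ and the bound \eqref{eq:entropy-counting-finite-upper} for all $n\ge N$.

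For the equivalent formulation \eqref{eq:entropy-counting-o-upper}, put $s\coloneqq\sup_{W\in\mc F}\Ent(W)$. This is finite, since $0\le s\le \ln 2/2$. Define
\[
\alpha_n\coloneqq\max\Bigl\{0,\ \tfrac1{n^2}\ln\bigl|\{G:\ W_G\in\mc F\}\bigr|-s\Bigr\},
\]
with $\ln 0=-\infty$, so that $\alpha_n=0$ when the set is empty. Then \eqref{eq:entropy-counting-o-upper} holds by construction whenever the set is nonempty, and trivially when it is empty. The first part, applied with every $\gamma>0$, gives $\limsup\alpha_n\le\gamma$, so $\alpha_n\to0$. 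Conversely, the first statement follows from the second by choosing $N$ so that $\alpha_n\le\gamma$ for all $n\ge N$.

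The only delicate point is the set equality between the two counting sets. This requires the saturation property of cut-closed families: a graph whose empirical graphon is merely at zero cut distance from some member of $\mc F$ must itself lie in $\mc F$. Lemma~\ref{lem:cut-closed-saturation} already supplies this, so no real obstacle remains.
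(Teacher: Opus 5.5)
Your proposal is correct and follows the paper's proof essentially step by step. Both pass to the closed image $\widetilde{\mc F}$ via Lemma~\ref{lem:cut-closed-saturation}, match the entropy suprema using Lemma~\ref{lem:entropy-zero-cut}, invoke \eqref{eq:CV-finite-counting-upper}, and take $\alpha_n$ to be the positive part of $\frac{1}{n^2}\ln(\text{count})-\sup_{W\in\mc F}\Ent(W)$. The only differences are cosmetic: the paper adds $1/n$ to this quantity, which is unnecessary, and you also note the converse implication.
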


\begin{proof}
Let $\widetilde{\mc F}$ be the image of $\mc F$ in $\widetilde{\mc W}$.
Lemma~\ref{lem:cut-closed-saturation} shows that $\widetilde{\mc F}$ is closed and that
\[
W_G\in\mc F
\quad\Longleftrightarrow\quad
\widetilde W_G\in\widetilde{\mc F}.
\]
Therefore
\[
\left|
\left\{
G\in\mbG_n:\ W_G\in\mc F
\right\}
\right|
=
\left|
\left\{
G\in\mbG_n:\ \widetilde W_G\in\widetilde{\mc F}
\right\}
\right|.
\]
Entropy is constant on every zero-cut-distance class by
Lemma~\ref{lem:entropy-zero-cut}, so
\[
\sup_{\widetilde W\in\widetilde{\mc F}}\Ent(\widetilde W)
=
\sup_{W\in\mc F}\Ent(W).
\]
Now apply \eqref{eq:CV-finite-counting-upper} to $\widetilde{\mc F}$. This proves
\eqref{eq:entropy-counting-finite-upper}.

For the second form, set
\[
S_{\mc F}\coloneqq \sup_{W\in\mc F}\Ent(W)
\]
and
\[
N_n(\mc F)
\coloneqq
\left|
\left\{
G\in\mbG_n:\ W_G\in\mc F
\right\}
\right|.
\]
If $N_n(\mc F)=0$, set $b_n=0$. If $N_n(\mc F)>0$, set
\[
b_n
\coloneqq
\max\left\{
0,
\frac1{n^2}\ln N_n(\mc F)-S_{\mc F}
\right\}.
\]

We now show that $\lim_{n\to\infty}b_n=0$. Fix $\gamma>0$. By
\eqref{eq:entropy-counting-finite-upper}, for all sufficiently large $n$,
\[
N_n(\mc F)
\le
\exp\left((S_{\mc F}+\gamma)n^2\right).
\]
If $N_n(\mc F)=0$, then $b_n=0$. If $N_n(\mc F)>0$, the last inequality gives
\[
\frac1{n^2}\ln N_n(\mc F)-S_{\mc F}
\le
\gamma,
\]
and therefore
\[
0\le b_n\le\gamma.
\]
Since $\gamma>0$ was arbitrary, it follows that
\[
\lim_{n\to\infty}b_n=0.
\]

Set
\[
\alpha_n(\mc F)\coloneqq b_n+\frac1n.
\]
Then
\[
\lim_{n\to\infty}\alpha_n(\mc F)=0.
\]
If $N_n(\mc F)>0$, the definition of $b_n$ gives
\[
\frac1{n^2}\ln N_n(\mc F)
\le
S_{\mc F}+b_n
\le
S_{\mc F}+\alpha_n(\mc F).
\]
Exponentiating gives \eqref{eq:entropy-counting-o-upper}. If
$N_n(\mc F)=0$, then \eqref{eq:entropy-counting-o-upper} is trivial. Thus
\eqref{eq:entropy-counting-o-upper} holds for every sufficiently large $n$.
\end{proof}

\begin{lem}\label{lem:cut-closed-constraints}
Fix $r\ge3$ and set $q=r-1$.
The following subsets of $\mc W$ are cut-closed.
\begin{enumerate}
\item For every $\delta\in[0,1]$,
\[
\mc F_\delta^{(r)}
\coloneqq
\{W\in\mc W:\;t(K_r,W)\le\delta\}.
\]
\item For every $\varepsilon>0$,
\[
\mc D_{\varepsilon,q}
\coloneqq
\{W\in\mc W:\;\delta_\square(W,B_q^\star)\ge\varepsilon\}.
\]
\item For every $\delta\in[0,1]$ and $\varepsilon>0$,
\[
\mc F_{\delta,\varepsilon}^{(r,q)}
\coloneqq
\mc F_\delta^{(r)}\cap\mc D_{\varepsilon,q}.
\]
\end{enumerate}
\end{lem}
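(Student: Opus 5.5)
The plan is to verify the definition of cut-closedness directly for each family. We take a sequence $W_m$ in the family with $\delta_\square(W_m,W)\to0$ and show that the limit $W$ lies in the family. The key tools are the continuity of $t(K_r,\cdot)$ from Lemma~\ref{lem:Kr-cut-lipschitz} and the triangle inequality for the pseudometric $\delta_\square$. Part (3) then follows at once, since an intersection of cut-closed families is cut-closed. Indeed, if every $W_m$ lies in both families, each family separately contains the limit $W$.

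For (1), let $W_m\in\mc F_\delta^{(r)}$ with $\delta_\square(W_m,W)\to0$. Lemma~\ref{lem:Kr-cut-lipschitz} gives
\[
t(K_r,W)\le t(K_r,W_m)+\binom r2\,\delta_\square(W_m,W)\le \delta+\binom r2\,\delta_\square(W_m,W).
\]
Letting $m\to\infty$ yields $t(K_r,W)\le\delta$. This covers $\delta=0$ as well.

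For (2), let $W_m\in\mc D_{\varepsilon,q}$ with $\delta_\square(W_m,W)\to0$. For every $V\in B_q^\star$ the triangle inequality for $\delta_\square$ gives $\delta_\square(W_m,V)\le\delta_\square(W_m,W)+\delta_\square(W,V)$. Taking the infimum over $V$ gives
\[
\varepsilon\le\delta_\square(W_m,B_q^\star)\le\delta_\square(W_m,W)+\delta_\square(W,B_q^\star).
\]
Letting $m\to\infty$ gives $\delta_\square(W,B_q^\star)\ge\varepsilon$, so $W\in\mc D_{\varepsilon,q}$. This is the same $1$-Lipschitz estimate for the distance to a set that was used in the proof of Lemma~\ref{lem:turan-stab}.

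No step is a real obstacle. The only point needing care is that the inequalities are non-strict ($\le\delta$ and $\ge\varepsilon$), which is exactly what makes them stable under limits. One might also note that membership in these families is invariant under zero cut distance, but the sequential argument above already covers this through the constant sequence.
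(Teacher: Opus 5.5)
Your proposal is correct and follows essentially the same route as the paper: part (1) via the cut-distance Lipschitz bound for $t(K_r,\cdot)$ from Lemma~\ref{lem:Kr-cut-lipschitz}, part (2) via the $1$-Lipschitz estimate for $W\mapsto\delta_\square(W,B_q^\star)$ obtained from the triangle inequality, and part (3) as an intersection of two cut-closed families. The only cosmetic difference is that you pass to the limit directly in the quantitative inequalities, whereas the paper first phrases them as continuity statements.
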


\begin{proof}

\begin{enumerate}
\item Fix $\delta\in[0,1]$. We show that
\[
\mc F_\delta^{(r)}
=
\{W\in\mc W:\;t(K_r,W)\le\delta\}
\]
is cut-closed.

Let $(W_m)_{m\ge1}$ be a sequence of graphons in $\mc F_\delta^{(r)}$, and suppose that
\[
\lim_{m\to\infty}\delta_\square(W_m,W)=0
\]
for some graphon $W\in\mc W$.

Since each $W_m$ belongs to $\mc F_\delta^{(r)}$, we have
\[
t(K_r,W_m)\le\delta
\qquad\text{for every }m.
\]
By Lemma~\ref{lem:Kr-cut-lipschitz}, the map
\[
W\mapsto t(K_r,W)
\]
is continuous in cut distance. Hence
\[
\lim_{m\to\infty}t(K_r,W_m)=t(K_r,W).
\]
Taking the limit in
\[
t(K_r,W_m)\le\delta
\]
gives
\[
t(K_r,W)\le\delta.
\]
Therefore
\[
W\in\mc F_\delta^{(r)}.
\]
So $\mc F_\delta^{(r)}$ is cut-closed.

\item Fix $\varepsilon>0$. We prove that
\[
\mc D_{\varepsilon,q}
=
\{W\in\mc W:\;\delta_\square(W,B_q^\star)\ge\varepsilon\}
\]
is cut-closed.

If $\mc H$ is a nonempty family of graphons, then the function
\[
W\mapsto \delta_\square(W,\mc H)
\]
is continuous in cut distance. Indeed, let $U,V\in\mc W$. For every $H\in\mc H$, the triangle inequality gives
\[
\delta_\square(U,H)
\le
\delta_\square(U,V)+\delta_\square(V,H).
\]
Taking the infimum over all $H\in\mc H$ gives
\[
\delta_\square(U,\mc H)
\le
\delta_\square(U,V)+\delta_\square(V,\mc H).
\]
So
\[
\delta_\square(U,\mc H)-\delta_\square(V,\mc H)
\le
\delta_\square(U,V).
\]
Now if we switch the roles of $U$ and $V$, we get
\[
\delta_\square(V,\mc H)-\delta_\square(U,\mc H)
\le
\delta_\square(U,V).
\]
Therefore
\[
\left|
\delta_\square(U,\mc H)-\delta_\square(V,\mc H)
\right|
\le
\delta_\square(U,V).
\]
Thus $W\mapsto\delta_\square(W,\mc H)$ is continuous.

Now apply this with
\[
\mc H=B_q^\star.
\]
This family is nonempty, because $W_q^\star\in B_q^\star$ by
Notation~\ref{not:Wqstar}. Hence
\[
W\mapsto \delta_\square(W,B_q^\star)
\]
is continuous in cut distance.

Now let $(W_m)_{m\ge1}$ be a sequence of graphons in $\mc D_{\varepsilon,q}$, and suppose that
\[
\lim_{m\to\infty}\delta_\square(W_m,W)=0
\]
for some graphon $W\in\mc W$.

Since each $W_m$ belongs to $\mc D_{\varepsilon,q}$, we have
\[
\delta_\square(W_m,B_q^\star)\ge\varepsilon
\qquad\text{for every }m.
\]
By the continuity established above, we have
\[
\lim_{m\to\infty}\delta_\square(W_m,B_q^\star)
=
\delta_\square(W,B_q^\star).
\]
Taking the limit in
\[
\delta_\square(W_m,B_q^\star)\ge\varepsilon
\]
gives
\[
\delta_\square(W,B_q^\star)\ge\varepsilon.
\]
Therefore
\[
W\in\mc D_{\varepsilon,q}.
\]
So $\mc D_{\varepsilon,q}$ is cut-closed.

\item Fix $\delta\in[0,1]$ and $\varepsilon>0$. We show that
\[
\mc F_{\delta,\varepsilon}^{(r,q)}
=
\mc F_\delta^{(r)}\cap\mc D_{\varepsilon,q}
\]
is cut-closed.

Let $(W_m)_{m\ge1}$ be a sequence of graphons in
$\mc F_{\delta,\varepsilon}^{(r,q)}$, and suppose that
\[
\lim_{m\to\infty}\delta_\square(W_m,W)=0
\]
for some graphon $W\in\mc W$.

Since
\[
W_m\in\mc F_{\delta,\varepsilon}^{(r,q)}
=
\mc F_\delta^{(r)}\cap\mc D_{\varepsilon,q},
\]
we have
\[
W_m\in\mc F_\delta^{(r)}
\qquad\text{and}\qquad
W_m\in\mc D_{\varepsilon,q}
\]
for every $m$.

By item \textup{(1)}, $\mc F_\delta^{(r)}$ is cut-closed. Hence
\[
W\in\mc F_\delta^{(r)}.
\]
By item \textup{(2)}, $\mc D_{\varepsilon,q}$ is cut-closed. Hence
\[
W\in\mc D_{\varepsilon,q}.
\]
Therefore
\[
W\in
\mc F_\delta^{(r)}\cap\mc D_{\varepsilon,q}
=
\mc F_{\delta,\varepsilon}^{(r,q)}.
\]
So $\mc F_{\delta,\varepsilon}^{(r,q)}$ is cut-closed.
\end{enumerate}
\end{proof}

\subsection{The reduced free-energy limit}\label{The reduced free-energy limit}

\begin{lem}[Balanced multipartite lower bound]\label{lem:balanced-q-partite-lower}
Let $n\in\mbbN$, and use
\[
n=qm+a,
\qquad
m,a\in\mbbZ_{\ge0},
\qquad
0\le a<q.
\]
Then, with $s_q$ as in Notation~\ref{not:sq},
\begin{equation}\label{eq:balanced-q-partite-lower}
\widetilde Z_{n,r}
\ge
\exp\left(
s_q n^2
-
\frac{\ln 2}{2}a\left(1-\frac{a}{q}\right)
\right)
\ge
\exp\left(
s_q n^2-\frac{q\ln 2}{8}
\right).
\end{equation}
\end{lem}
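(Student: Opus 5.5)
The plan is to bound $\widetilde Z_{n,r}$ from below by restricting the sum to a single explicit family of $K_r$-free graphs and counting it exactly. Fix the balanced partition of $[n]$ into $q$ parts $P_1,\dots,P_q$ (for instance consecutive intervals), where $a$ parts have size $m+1$ and $q-a$ parts have size $m$. Let $\mc S_n$ be the set of graphs in $\mbG_n$ with no edge inside any part; the between-part pairs may be chosen arbitrarily. Every $G\in\mc S_n$ is $q$-partite. Any $r=q+1$ vertices contain two in the same part, so $K_r(G)=0$. Hence each such $G$ has reduced mass $1$, and
\[
\widetilde Z_{n,r}\ge|\mc S_n|=2^{e_n},
\]
where $e_n$ is the number of unordered pairs lying in different parts.

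Next I compute $e_n$ exactly. By the identity in Lemma~\ref{lem:cauchy-schwarz},
\[
e_n=\binom n2-\sum_i\binom{|P_i|}{2}=\frac12\Bigl(n^2-\sum_i|P_i|^2\Bigr).
\]
With $a$ parts of size $m+1$ and $q-a$ of size $m$,
\[
\sum_i|P_i|^2=qm^2+2am+a.
\]
Writing $m=(n-a)/q$ gives
\[
qm^2+2am=\frac{(n-a)^2+2a(n-a)}{q}=\frac{n^2-a^2}{q}.
\]
Therefore
\[
\sum_i|P_i|^2=\frac{n^2}{q}+a\Bigl(1-\frac aq\Bigr),
\qquad
e_n=\frac12\Bigl(1-\frac1q\Bigr)n^2-\frac12 a\Bigl(1-\frac aq\Bigr).
\]
Multiplying by $\ln 2$ gives exactly the first inequality in \eqref{eq:balanced-q-partite-lower}, since $s_q n^2=\frac{\ln2}{2}(1-\frac1q)n^2$.

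For the second inequality, note that $a(1-a/q)=a(q-a)/q$ is at most $q/4$ by AM–GM, with the maximum at $a=q/2$. Hence
\[
\frac{\ln 2}{2}\,a\Bigl(1-\frac aq\Bigr)\le\frac{q\ln 2}{8}.
\]

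There is no real obstacle here. The only step needing care is the algebra for $\sum_i|P_i|^2$, in particular substituting $m=(n-a)/q$ correctly so that the correction term comes out as exactly $a(1-a/q)$.
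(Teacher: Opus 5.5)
Your proposal is correct and follows essentially the same route as the paper. Both restrict $\widetilde Z_{n,r}$ to the $2^{M}$ graphs with edges only between the parts of a fixed balanced $q$-partition, compute $\sum_i |V_i|^2 = n^2/q + a(1-a/q)$ to get the exact exponent, and bound $a(1-a/q)\le q/4$ for the second inequality.
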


\begin{proof}
Write $[n]$ as the disjoint union of $q$ possibly empty sets
\[
[n]=V_1\sqcup\cdots\sqcup V_q
\]
whose sizes differ by at most one. Thus $a$ sets have size $m+1$, and the
remaining $q-a$ sets have size $m$. Put
\[
n_i\coloneqq |V_i|,
\qquad i=1,\dots,q.
\]

Let $M$ be the number of unordered pairs of vertices lying in different parts. Then
\[
M
=
\sum_{1\le i<j\le q} n_i n_j.
\]
Since
\[
\left(\sum_{i=1}^q n_i\right)^2
=
\sum_{i=1}^q n_i^2
+
2\sum_{1\le i<j\le q}n_i n_j,
\]
we have
\begin{equation}
\label{eq:M-from-part-square-sum}
M
=
\frac12\left(n^2-\sum_{i=1}^q n_i^2\right).
\end{equation}

We now compute the sum of the squared part sizes. By the choice of the partition,
\begin{align*}
\sum_{i=1}^q n_i^2
&=
(q-a)m^2+a(m+1)^2  \\
&=
qm^2+2a m+a.
\end{align*}
On the other hand, since $n=qm+a$,
\[
\frac{n^2}{q}
=
qm^2+2a m+\frac{a^2}{q}.
\]
Therefore
\begin{equation}
\label{eq:balanced-part-square-sum}
\sum_{i=1}^q n_i^2
=
\frac{n^2}{q}
+
a\left(1-\frac{a}{q}\right).
\end{equation}
Substituting \eqref{eq:balanced-part-square-sum} into
\eqref{eq:M-from-part-square-sum} gives
\begin{equation}
\label{eq:balanced-M-value}
M
=
\frac12\left(1-\frac1q\right)n^2
-
\frac12a\left(1-\frac{a}{q}\right).
\end{equation}

Now choose an arbitrary subset of these $M$ possible between-part edges, and put no
edges inside the parts. This gives exactly $2^M$ labeled graphs on $[n]$.

Every graph constructed in this way is $q$-partite. Hence every clique in such a graph
uses at most one vertex from each part, so every clique has size at most $q$. Since
$q=r-1$, none of these graphs contains a copy of $K_r$. Therefore each of these
graphs has reduced mass
\[
\exp(-r!\,w\,K_r(G))=1.
\]
Thus all these graphs contribute $1$ each to $\widetilde Z_{n,r}$, and so
\[
\widetilde Z_{n,r}
\ge
2^M.
\]
Multiplying \eqref{eq:balanced-M-value} by $\ln 2$ and then using
\eqref{eq:sq-definition}, we obtain
\[
\begin{aligned}
(\ln 2)M
&=
\frac{\ln 2}{2}\left(1-\frac1q\right)n^2
-
\frac{\ln 2}{2}a\left(1-\frac{a}{q}\right) \\
&=
s_q n^2
-
\frac{\ln 2}{2}a\left(1-\frac{a}{q}\right).
\end{aligned}
\]
Consequently,
\[
2^M
=
\exp\left(
s_q n^2
-
\frac{\ln 2}{2}a\left(1-\frac{a}{q}\right)
\right).
\]
Together with $\widetilde Z_{n,r}\ge 2^M$, this proves the first inequality
in \eqref{eq:balanced-q-partite-lower}.

Finally, the function
\[
x\left(1-\frac{x}{q}\right)
\]
is maximized on $[0,q]$ at $x=q/2$, where its value is $q/4$. Since
$0\le a<q$, we have
\[
0\le
a\left(1-\frac{a}{q}\right)
\le
\frac q4.
\]
Hence
\[
-\frac{\ln 2}{2}a\left(1-\frac{a}{q}\right)
\ge
-\frac{q\ln 2}{8}.
\]
This gives the second inequality in \eqref{eq:balanced-q-partite-lower}.
\end{proof}

We now prove Theorem~\ref{thm:main-clique}\textup{(i)}. We first fix an
auxiliary clique coefficient $L$, then let $n\to\infty$, and only afterward
let $L\to\infty$.

\begin{proof}[Proof of Theorem~\ref{thm:main-clique}\textup{(i)}]
We prove the upper and lower bounds separately.

\smallskip
\noindent\emph{Upper bound.}
Fix $L>0$ and define
\[
\widetilde Z_{n,r}^{(L)}
\coloneqq
\sum_{G\in\mbG_n}
\exp\left(-Ln^2t(K_r,W_G)\right).
\]
This is an auxiliary partition function in which the clique penalty $L$
remains fixed as $n$ grows.

For every graphon $W$,
\[
0\le t(K_r,W)\le 1.
\]
Moreover, $t(K_r,W)$ is continuous in cut distance by
Lemma~\ref{lem:Kr-cut-lipschitz}. Hence the functional
\[
W\longmapsto -L\,t(K_r,W)
\]
is bounded and continuous. We may therefore apply
\cite[Thm.~3.1]{ChatterjeeDiaconis2013}, which gives
\begin{equation}\label{eq:fixed-clique-variational-limit}
\lim_{n\to\infty}
\frac1{n^2}\ln\widetilde Z_{n,r}^{(L)}
=
\sup_{W\in\mc W}
\left\{
\Ent(W)-L\,t(K_r,W)
\right\}.
\end{equation}

We next determine what happens to the right-hand side when $L$ becomes
large. In the notation of \cite[Thm.~7.1]{ChatterjeeDiaconis2013}, take
\[
H=K_r,
\qquad
\beta_1=0,
\qquad
\beta_2=-L.
\]
The rate functional $I$ used there satisfies
\[
I(W)
=
\frac12\int_{[0,1]^2}
\bigl[W\ln W+(1-W)\ln(1-W)\bigr]\,dx\,dy
=
-\Ent(W),
\]
with the usual convention at $0$ and $1$. Thus, at $\beta_2=-L$, their
variational value is exactly
\[
\sup_{W\in\mc W}
\left\{
\Ent(W)-L\,t(K_r,W)
\right\}.
\]
Moreover,
\[
\chi(K_r)=r
\qquad\text{and}\qquad
p=\frac{e^{2\beta_1}}{1+e^{2\beta_1}}=\frac12.
\]
Theorem~7.1 of that paper therefore gives the limiting value
\[
\frac{r-2}{2(r-1)}\ln 2
=
\frac{\ln 2}{2}\left(1-\frac1q\right)
=
s_q,
\]
where we used $q=r-1$. It follows that
\begin{equation}\label{eq:fixed-clique-large-negative-limit}
\lim_{L\to\infty}
\sup_{W\in\mc W}
\left\{
\Ent(W)-L\,t(K_r,W)
\right\}
=
s_q.
\end{equation}

We now compare the auxiliary model with the model in the theorem. Write
\[
L_n\coloneqq wn^{r-2}.
\]
By \eqref{eq:reduced-mass-graphon}, the reduced mass of a graph $G$ is
\[
\exp\bigl(-r!\,w\,K_r(G)\bigr)
=
\exp\left(-n^2L_n\,t(K_r,W_G)\right).
\]
Since $r\ge3$ and $w>0$, we have
\[
\lim_{n\to\infty}L_n=+\infty.
\]
Thus, for every fixed $L>0$, we have $L_n\ge L$ for all sufficiently
large $n$. Since $t(K_r,W_G)\ge0$, this gives
\[
\exp\left(-n^2L_n\,t(K_r,W_G)\right)
\le
\exp\left(-n^2L\,t(K_r,W_G)\right).
\]
Summing this inequality over all $G\in\mbG_n$ yields
\[
\widetilde Z_{n,r}
\le
\widetilde Z_{n,r}^{(L)}
\]
for all sufficiently large $n$. Taking logarithms, dividing by $n^2$,
and using \eqref{eq:fixed-clique-variational-limit}, we obtain
\[
\begin{aligned}
\limsup_{n\to\infty}F_{n,r}(w)
&\le
\lim_{n\to\infty}
\frac1{n^2}\ln\widetilde Z_{n,r}^{(L)}\\
&=
\sup_{W\in\mc W}
\left\{
\Ent(W)-L\,t(K_r,W)
\right\}.
\end{aligned}
\]
This holds for every fixed $L>0$. We may therefore let $L\to\infty$.
Equation~\eqref{eq:fixed-clique-large-negative-limit} then gives
\[
\limsup_{n\to\infty}F_{n,r}(w)\le s_q.
\]

\smallskip
\noindent\emph{Lower bound.}
A graph whose edges lie only between the parts of a $q$-partite
partition contains no copy of $K_r$, because $q=r-1$. Such a graph
therefore has reduced mass $1$. Lemma~\ref{lem:balanced-q-partite-lower}
counts these graphs and gives
\[
F_{n,r}(w)
\ge
s_q-\frac{q\ln 2}{8n^2}.
\]
Taking the lower limit gives
\[
\liminf_{n\to\infty}F_{n,r}(w)\ge s_q.
\]

The upper and lower bounds now agree. Therefore
\[
\lim_{n\to\infty}F_{n,r}(w)
=
s_q
=
\frac{\ln 2}{2}\left(1-\frac1q\right).
\]
\end{proof}

\subsection{Exponential concentration in cut distance}

We now prove Theorem~\ref{thm:main-clique}\textup{(ii)}. Fix
$\varepsilon>0$. The event we need to estimate is
\[
\delta_\square(W_{\mathsf G_n},B_q^\star)\ge\varepsilon.
\]
After choosing a small number $\delta>0$, we separate this event according to
the empirical clique density $t(K_r,W_{\mathsf G_n})$. The part with
\[
t(K_r,W_{\mathsf G_n})\ge\delta
\]
has very small total reduced mass by
Lemma~\ref{lem:large-Kr-density-negligible}. The remaining part has
\[
t(K_r,W_{\mathsf G_n})<\delta
\qquad\text{and}\qquad
\delta_\square(W_{\mathsf G_n},B_q^\star)\ge\varepsilon.
\]
For this part, Lemma~\ref{lem:near-gap-Kr} gives an entropy loss, and the
counting bound in Lemma~\ref{lem:entropy-counting} turns that entropy loss
into an exponential probability bound.

As explained in Remark~\ref{rem:relation-to-fixed-parameter-ergms},
\cite[Thm.~3.2]{ChatterjeeDiaconis2013} gives concentration for each fixed
clique coefficient $L$, but it does not provide concentration constants
uniform in $L$. Since the present model follows the diagonal sequence
$L_n=w n^{r-2}$, for which $\lim_{n\to\infty}L_n=+\infty$, the proof below
establishes the required uniform diagonal estimate directly.

\begin{proof}[Proof of Theorem~\ref{thm:main-clique}\textup{(ii)}]
Fix $\varepsilon>0$, and let $\mathsf G_n\sim\mbbP_{n,r}$. Choose
\[
\delta_0=\delta_{r,\varepsilon}>0
\qquad\text{and}\qquad
c'=c'_{r,\varepsilon}>0
\]
from Lemma~\ref{lem:near-gap-Kr}, and set
\[
\delta\coloneqq \min\{\delta_0,1\}.
\]
The conclusion of Lemma~\ref{lem:near-gap-Kr} still holds under the stronger condition
$t(K_r,W)\le\delta$.

Define
\[
\mc A_{n,\varepsilon}
\coloneqq
\left\{
G\in\mbG_n:\ \delta_\square(W_G,B_q^\star)\ge\varepsilon
\right\}.
\]
By \eqref{eq:showing-ergm},
\[
\mbbP(\mathsf G_n\in\mc A_{n,\varepsilon})
=
\frac{R_{n,\varepsilon}}{\widetilde Z_{n,r}},
\]
where
\[
R_{n,\varepsilon}
\coloneqq
\sum_{G\in\mc A_{n,\varepsilon}}
\exp\bigl(-r!\,w\,K_r(G)\bigr).
\]
Using \eqref{eq:reduced-mass-graphon}, set
\[
R_{n,\varepsilon}
=
R_{n,\varepsilon}^{\mathrm{large}}
+
R_{n,\varepsilon}^{\mathrm{small}},
\]
where
\[
R_{n,\varepsilon}^{\mathrm{large}}
\coloneqq
\sum_{\substack{G\in\mc A_{n,\varepsilon}\\ t(K_r,W_G)\ge\delta}}
\exp\bigl(-w\,t(K_r,W_G)n^r\bigr)
\]
and
\[
R_{n,\varepsilon}^{\mathrm{small}}
\coloneqq
\sum_{\substack{G\in\mc A_{n,\varepsilon}\\ t(K_r,W_G)<\delta}}
\exp\bigl(-w\,t(K_r,W_G)n^r\bigr).
\]

For every $M>0$, Lemma~\ref{lem:large-Kr-density-negligible} gives, for all
sufficiently large $n$,
\begin{equation}\label{eq:R-large-bound}
R_{n,\varepsilon}^{\mathrm{large}}
\le
\exp(-Mn^2).
\end{equation}

Every summand in $R_{n,\varepsilon}^{\mathrm{small}}$ is at most $1$, and every graph in
this sum satisfies
\[
W_G\in\mc F_{\delta,\varepsilon}^{(r,q)}.
\]
Hence
\[
R_{n,\varepsilon}^{\mathrm{small}}
\le
\left|
\left\{
G\in\mbG_n:\ W_G\in\mc F_{\delta,\varepsilon}^{(r,q)}
\right\}
\right|.
\]
If $\mc F_{\delta,\varepsilon}^{(r,q)}$ is empty, this upper bound is $0$. If it is
nonempty, then it is cut-closed by Lemma~\ref{lem:cut-closed-constraints}, and
Lemma~\ref{lem:near-gap-Kr} gives
\[
\sup_{W\in\mc F_{\delta,\varepsilon}^{(r,q)}}\Ent(W)
\le
s_q-c'.
\]
Lemma~\ref{lem:entropy-counting}, with counting error $c'/4$, now gives, for every
sufficiently large $n$,
\begin{equation}\label{eq:R-small-bound}
R_{n,\varepsilon}^{\mathrm{small}}
\le
\exp\left(\left(s_q-\frac{3c'}4\right)n^2\right).
\end{equation}

Set
\[
C_q\coloneqq \frac{q\ln 2}{8}.
\]
Lemma~\ref{lem:balanced-q-partite-lower} gives
\[
\widetilde Z_{n,r}
\ge
\exp(s_qn^2-C_q).
\]
Choose $M=1+c'$. Combining this estimate with
\eqref{eq:R-large-bound} and \eqref{eq:R-small-bound} gives
\begin{align*}
\mbbP(\mathsf G_n\in\mc A_{n,\varepsilon})
&\le
\exp\bigl(-(M+s_q)n^2+C_q\bigr)
+
\exp\left(-\frac{3c'}4n^2+C_q\right).
\end{align*}
For every sufficiently large $n$, both terms on the right are at most
$\exp(-c'n^2/2)$. Increasing the threshold for $n$ once more gives
\[
2\exp(-c'n^2/2)
\le
\exp(-c'n^2/4).
\]
Therefore, for every sufficiently large $n$,
\[
\mbbP\bigl(
\delta_\square(W_{\mathsf G_n},B_q^\star)\ge\varepsilon
\bigr)
\le
\exp(-c'n^2/4).
\]
The theorem follows with $c=c'/4$. The constant $c$ depends only on $\varepsilon$ and
$r$. The threshold for $n$ may also depend on $w$, through
Lemma~\ref{lem:large-Kr-density-negligible}.
\end{proof}

\subsection{Probability bounds needed for the rounding argument}

\begin{lem}\label{lem:prob-tools-balance}
The following probability bounds hold:
\begin{enumerate}
\item[\textup{(i)}] \textbf{Markov's inequality.}
If $Y\ge 0$ almost surely and $t>0$, then
\[
\mbbP(Y\ge t)
\le
\frac{\mbbE[Y]}{t}.
\]

\item[\textup{(ii)}] \textbf{Hoeffding's inequality.}
Let $m\ge 1$, and let $Y_1,\dots,Y_m$ be independent random variables. Suppose that,
for each $i\in[m]$, there are real numbers $a_i\le b_i$ such that
\[
a_i\le Y_i\le b_i
\]
almost surely. Set
\[
V\coloneqq \sum_{i=1}^m (b_i-a_i)^2.
\]
If $V>0$, then, for every $t\ge 0$,
\[
\mbbP\left(
\left|
\sum_{i=1}^m Y_i
-
\sum_{i=1}^m \mbbE[Y_i]
\right|
\ge t
\right)
\le
2\exp\left(-\frac{2t^2}{V}\right).
\]
If $V=0$, then
\[
\sum_{i=1}^m Y_i
=
\sum_{i=1}^m \mbbE[Y_i]
\]
almost surely. Consequently, for every $t>0$,
\[
\mbbP\left(
\left|
\sum_{i=1}^m Y_i
-
\sum_{i=1}^m \mbbE[Y_i]
\right|
\ge t
\right)
=
0.
\]

In particular, if $0\le Y_i\le 1$ almost surely for every $i\in[m]$, and if
\[
S\coloneqq \sum_{i=1}^m Y_i,
\]
then, for every $t\ge0$,
\[
\mbbP\bigl(|S-\mbbE[S]|\ge t\bigr)
\le
2\exp\left(-\frac{2t^2}{m}\right).
\]

\item[\textup{(iii)}] \textbf{McDiarmid's inequality.}
Let $m\ge1$, let $S_1,\dots,S_m$ be finite sets, and let
$X_1,\dots,X_m$ be independent random variables such that
\[
X_i\in S_i
\]
almost surely for every $i\in[m]$. Let
\[
f:S_1\times\cdots\times S_m\to\mbbR
\]
be a function. Assume that there is a number $L>0$ such that changing only
one coordinate can change the value of $f$ by at most $L$. More precisely,
\[
\left|
f(x_1,\dots,x_i,\dots,x_m)
-
f(x_1,\dots,x_i',\dots,x_m)
\right|
\le L
\]
for every $i\in[m]$, every $x_i,x_i'\in S_i$, and every fixed choice of the
other coordinates. Then, for every $t\ge0$,
\[
\mbbP\left(
\left|
f(X_1,\dots,X_m)
-
\mbbE[f(X_1,\dots,X_m)]
\right|
\ge t
\right)
\le
2\exp\left(-\frac{2t^2}{mL^2}\right).
\]
\end{enumerate}
\end{lem}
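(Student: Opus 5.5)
These are standard concentration inequalities. The plan is to derive them from first principles in the order stated, with (iii) reduced to the same exponential-moment estimate that drives (ii).

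For \textup{(i)}, I will use the pointwise inequality $t\,\mathbf{1}_{\{Y\ge t\}}\le Y$, which holds almost surely because $Y\ge0$. Taking expectations gives $t\,\mbbP(Y\ge t)\le\mbbE[Y]$, and dividing by $t>0$ finishes.

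For \textup{(ii)}, the key auxiliary fact is Hoeffding's lemma. If $Z$ satisfies $a\le Z\le b$ almost surely, then
\[
\mbbE\bigl[e^{s(Z-\mbbE Z)}\bigr]\le \exp\bigl(s^2(b-a)^2/8\bigr)
\]
for every $s\in\mbbR$. I will prove it in three steps:
\begin{itemize}
\item Use convexity of $z\mapsto e^{sz}$ on $[a,b]$ to bound $e^{sZ}$ by the chord through the endpoints.
\item After centring, write the logarithm of the resulting bound as a function $\varphi(u)$ with $u=s(b-a)$.
\item Check that $\varphi(0)=\varphi'(0)=0$ and $\varphi''\le\frac14$, so Taylor's theorem gives $\varphi(u)\le u^2/8$.
\end{itemize}
With this in hand, for $V>0$ I apply the Chernoff method. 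Independence factorizes $\mbbE\exp\bigl(s\sum(Y_i-\mbbE Y_i)\bigr)$, so Markov's inequality from \textup{(i)}, applied to the exponential, gives
\[
\mbbP\Bigl(\sum_i (Y_i-\mbbE Y_i)\ge t\Bigr)\le e^{-st+s^2V/8}.
\]
Choosing $s=4t/V$ yields $e^{-2t^2/V}$. The same argument applied to $-Y_i$ bounds the lower tail, and a union bound produces the factor $2$.

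If $V=0$, then $a_i=b_i$ for every $i$, so each $Y_i$ is almost surely the constant $a_i=\mbbE[Y_i]$. The identity and the zero-probability statement are then immediate. The special case $0\le Y_i\le1$ is the instance $a_i=0$, $b_i=1$, $V=m$.

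For \textup{(iii)}, I will use the Doob martingale $M_k\coloneqq\mbbE[f(X_1,\dots,X_m)\mid X_1,\dots,X_k]$ for $k=0,\dots,m$, so that $M_0=\mbbE f$ and $M_m=f$. Because the $X_i$ take values in finite sets and are independent, $M_k$ can be written explicitly as a finite weighted average over the remaining coordinates. Fix $x_1,\dots,x_{k-1}$. Then $M_k$, viewed as a function of $x_k$, ranges inside an interval $[A_k,B_k]$ with $B_k-A_k\le L$; this follows from the bounded-differences hypothesis applied inside that average. Conditional on $X_1,\dots,X_{k-1}$, the martingale property gives $\mbbE[M_k-M_{k-1}\mid X_1,\dots,X_{k-1}]=0$. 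So Hoeffding's lemma applies conditionally:
\[
\mbbE\bigl[e^{s(M_k-M_{k-1})}\mid X_1,\dots,X_{k-1}\bigr]\le e^{s^2L^2/8}.
\]
Iterating the tower property over $k=m,m-1,\dots,1$ gives $\mbbE e^{s(f-\mbbE f)}\le e^{s^2mL^2/8}$. The same Chernoff optimization as in \textup{(ii)} then yields $2\exp(-2t^2/(mL^2))$.

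The step needing the most care is this conditional range bound. The increment $M_k-M_{k-1}$ need not lie in an interval of length $L$ centred at $0$. What must be shown is that, for fixed earlier coordinates, it lies in some interval of length $L$, namely $[A_k,B_k]$ shifted by $M_{k-1}$, and that interval depends only on $X_1,\dots,X_{k-1}$. Both points rest on independence of the $X_i$: independence is what lets $M_k$ be computed as an average over the later coordinates against their own, unconditioned laws.
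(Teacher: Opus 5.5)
Your proposal is correct, but it takes a different route from the paper. The paper derives none of the three inequalities. It cites Mitzenmacher and Upfal for each: Markov (Thm.~3.1), Hoeffding (Thm.~4.14) and McDiarmid (Thm.~13.7). It then checks by hand only the degenerate cases: $t=0$, where the right-hand side $2$ makes the bound trivial, and $V=0$, where each $Y_i$ is almost surely constant, exactly as you argue.

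You prove everything from first principles instead:
\begin{enumerate}
\item Markov from the pointwise bound $t\,\mathbf{1}_{\{Y\ge t\}}\le Y$;
\item Hoeffding from Hoeffding's lemma plus the Chernoff bound with $s=4t/V$;
\item McDiarmid from the Doob martingale $M_k=\mbbE[f\mid X_1,\dots,X_k]$, with Hoeffding's lemma applied conditionally.
\end{enumerate}

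The paper's approach buys brevity. Yours buys self-containment and shows exactly where independence and finiteness of the $S_i$ are used. In particular, you show that, conditionally on $X_1,\dots,X_{k-1}$, the increment $M_k-M_{k-1}$ lies in an interval of length $L$, not merely that $|M_k-M_{k-1}|\le L$. This is what gives the sharp exponent $2t^2/(mL^2)$; plain Azuma with $|M_k-M_{k-1}|\le L$ would only give $t^2/(2mL^2)$, which does not match the stated constant.

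Two small points to tidy up. In Hoeffding's lemma, treat $a=b$ separately: the chord bound divides by $b-a$, but the claim is trivial in that case. Also note that your choice $s=4t/V$ degenerates harmlessly to $s=0$ when $t=0$.
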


\begin{proof}
Item~\textup{(i)} is Markov's inequality; see
\cite[Thm.~3.1]{MitzenmacherUpfal2017}, with the notation there renamed as
$X=Y$ and $a=t$.

For item~\textup{(ii)}, first assume that $V>0$.

If $t>0$, Hoeffding's inequality
\cite[Thm.~4.14]{MitzenmacherUpfal2017} gives
\[
\mbbP\left(
\left|
\sum_{i=1}^m Y_i
-
\sum_{i=1}^m \mbbE[Y_i]
\right|
\ge t
\right)
\le
2\exp\left(-\frac{2t^2}{V}\right).
\]
If $t=0$, the same bound is also true, because the probability on the left is
at most $1$, while the right-hand side is $2$.

Now assume that
\[
V=0.
\]
Then
\[
\sum_{i=1}^m (b_i-a_i)^2=0.
\]
Each term in this sum is nonnegative, so
\[
b_i-a_i=0
\qquad
\text{for every }i\in[m].
\]
Thus $a_i=b_i$ for every $i$. Since
\[
a_i\le Y_i\le b_i
\]
almost surely, each $Y_i$ is almost surely constant. Hence
\[
Y_i=\mbbE[Y_i]
\qquad
\text{almost surely for every }i\in[m].
\]
Adding over $i=1,\dots,m$ gives
\[
\sum_{i=1}^m Y_i
=
\sum_{i=1}^m \mbbE[Y_i]
\]
almost surely. Therefore, for every $t>0$,
\[
\mbbP\left(
\left|
\sum_{i=1}^m Y_i
-
\sum_{i=1}^m \mbbE[Y_i]
\right|
\ge t
\right)
=
0.
\]

For the last part of item~\textup{(ii)}, assume that
\[
0\le Y_i\le1
\qquad
\text{almost surely for every }i\in[m],
\]
and set
\[
S\coloneqq \sum_{i=1}^m Y_i.
\]
Taking
\[
a_i=0
\qquad
\text{and}
\qquad
b_i=1
\]
for every $i$ gives
\[
V
=
\sum_{i=1}^m (b_i-a_i)^2
=
\sum_{i=1}^m 1
=
m.
\]
The bound already proved therefore gives, for every $t\ge0$,
\[
\mbbP\left(
|S-\mbbE[S]|\ge t
\right)
\le
2\exp\left(-\frac{2t^2}{m}\right).
\]

For item~\textup{(iii)}, the bounded-difference condition is exactly the
hypothesis of McDiarmid's inequality in
\cite[Sec.~13.5.1, Thm.~13.7]{MitzenmacherUpfal2017}. Applying that theorem with
$n=m$, $c=L$, and $\lambda=t$ gives the stated bound for $t>0$. The
theorem applies here because the random variables take values in finite sets.
For $t=0$, the bound is immediate, since the right-hand side is $2$.
\end{proof}

\subsection{Transforming cut-distance closeness to a vertex partition}
\label{Transforming cut-distance closeness to a vertex partition}

The next lemma turns cut-distance closeness to $B_q^\star$ into a partition of the finite
vertex set. It controls the part sizes, the edge density between every pair of parts, and the total
number of edges inside the parts.

\begin{lem}\label{lem:det-round-Bqstar}
Let $q\ge2$ and $0<\varepsilon\le1$, and set
\[
\rho\coloneqq \frac{\varepsilon}{100q^2}.
\]
There exists $N_{\mathrm{det}}=N_{\mathrm{det}}(\varepsilon,q)$ such that the following
holds for every $n\ge N_{\mathrm{det}}$. If $G$ is a graph on $[n]$ and
\[
\delta_\square(W_G,B_q^\star)<\rho,
\]
then there is a partition
\[
[n]=V_1\sqcup\cdots\sqcup V_q
\]
with
\[
\bigl||V_\alpha|-n/q\bigr|\le\varepsilon n
\qquad\text{for every }\alpha\in[q],
\]
each $V_\alpha$ has size at least $2$,
\[
\left|
\frac{e_G(V_\alpha,V_\beta)}{|V_\alpha||V_\beta|}-\frac12
\right|
\le\varepsilon
\qquad\text{for all distinct }\alpha,\beta\in[q],
\]
\[
\sum_{\alpha=1}^q e_G(V_\alpha)
\le
\frac{\varepsilon n^2}{10q},
\]
and
\[
\frac{\sum_{\alpha=1}^q e_G(V_\alpha)}
{\sum_{\alpha=1}^q\binom{|V_\alpha|}{2}}
\le\varepsilon.
\]
Moreover, if $G'$ is obtained from $G$ by deleting all edges whose endpoints lie in the
same part, then $G'$ is $q$-partite and
\[
d_{\mathrm{edit}}(G,G')
\le
\frac{\varepsilon}{10q}.
\]
\end{lem}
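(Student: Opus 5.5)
The plan is to turn the measure-preserving relabeling that nearly realizes $\delta_\square(W_G,W_q^\star)<\rho$ into a vertex partition, and to let McDiarmid-type rounding handle the fact that the pulled-back blocks need not be unions of the intervals $I_i$. By Notation~\ref{not:Wqstar}, $\delta_\square(W_G,B_q^\star)=\delta_\square(W_G,W_q^\star)$. So there is a measure-preserving bijection $\phi$ with $\|W_G-(W_q^\star)^\phi\|_\square<\rho$. Set $B_a\coloneqq\phi^{-1}(A_a)$, so that $\lambda(B_a)=1/q$. For each vertex $i$ define the fractional membership $f_a(i)\coloneqq n\lambda(I_i\cap B_a)\in[0,1]$, which satisfies $\sum_a f_a(i)=1$ and $\sum_i f_a(i)=n/q$. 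By Lemma~\ref{lem:cutnorm-weighted}, applied with step functions $f,g$ built from $\mathbf 1_{B_a}$ and cut-norm test sets, the fractional edge counts are controlled:
\[
\Bigl|\tfrac1{n^2}\textstyle\sum_{i,j}a_{ij}f_a(i)f_b(j)-\tfrac12\lambda(B_a)\lambda(B_b)\mathbf 1_{a\ne b}\Bigr|\le\rho .
\]
To derive this, note that $\int_{B_a\times B_b}W_G$ equals the fractional sum exactly, because $W_G$ is constant on each $I_i\times I_j$. Meanwhile $\int_{B_a\times B_b}(W_q^\star)^\phi=\tfrac12 q^{-2}$ for $a\ne b$ and $0$ for $a=b$, and the difference is bounded by the cut norm.

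Next I would round the fractional assignment to a genuine partition. Each vertex $i$ is assigned independently to part $a$ with probability $f_a(i)$. This gives $\mathbb E|V_a|=n/q$, and for $a\ne b$ we get $\mathbb E\,e_G(V_a,V_b)=\sum_{i\ne j}a_{ij}f_a(i)f_b(j)$ up to the diagonal convention (the diagonal vanishes since $a_{ii}=0$). Similarly $\mathbb E\,e_G(V_a)=\tfrac12\sum_{i\neq j}a_{ij}f_a(i)f_a(j)\le\rho n^2/2$. Each of these quantities is a function of the $n$ independent labels. Changing one label moves $|V_a|$ by at most $1$ and moves an edge count by at most $n$. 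McDiarmid's inequality, Lemma~\ref{lem:prob-tools-balance}(iii), then gives deviations of order $n^{3/2}\sqrt{\log}$ for the edge counts and $\sqrt n$ for the sizes, each with probability tending to $0$. A union bound over the $O(q^2)$ quantities shows that for $n\ge N_{\mathrm{det}}$ some realization satisfies all the bounds simultaneously, with error $\rho+o(1)$ after normalizing by $n^2$. Since the lemma is deterministic, this probabilistic-method step only needs positive probability.

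The final step is the arithmetic from $\rho=\varepsilon/(100q^2)$. For the sizes, $||V_a|-n/q|\le n^{2/3}\le\varepsilon n$, and $|V_a|\ge2$ for large $n$. For the within-part edges, $\sum_a e_G(V_a)\le q(\rho/2+o(1))n^2\le\varepsilon n^2/(10q)$. For the between-part density, $e_G(V_a,V_b)/n^2$ lies within $2\rho$ of $\tfrac1{2q^2}$ while $|V_a||V_b|/n^2=q^{-2}+o(1)$, so the density is within $2\rho q^2+o(1)\le\varepsilon$ of $\tfrac12$. For the within-part density ratio, Lemma~\ref{lem:cauchy-schwarz}(ii) gives $\sum\binom{|V_a|}2\ge\tfrac12(n^2/q-n)$, and the ratio is then at most $(\varepsilon n^2/(10q))/(n^2/(3q))\le\varepsilon$. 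The edit-distance claim is immediate, because $|E(G)\triangle E(G')|=\sum_a e_G(V_a)$.

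I expect the main obstacle to be the relabeling step, since a bijection $\phi$ that does not respect the interval structure means the blocks $B_a$ split the intervals $I_i$. The fractional-membership-plus-randomized-rounding argument is designed precisely to avoid needing a step-function-preserving $\phi$. The care needed is in writing the fractional edge count exactly as an integral against $\mathbf 1_{B_a}\otimes\mathbf 1_{B_b}$, and in bounding the bounded-difference constants correctly.
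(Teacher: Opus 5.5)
Your proposal is correct and is essentially the paper's proof: the paper's weights $p_{i,\alpha}=n\lambda(J_i\cap A_\alpha)$, with $J_i$ the image of $I_i$ under the relabeling, coincide with your $f_a(i)=n\lambda(I_i\cap\phi^{-1}(A_a))$, and it likewise rounds them by an independent random coloring, closes with a union bound, and finishes with the same arithmetic. The only differences are that the paper uses Hoeffding for part sizes, Markov for the within-part edge count, McDiarmid only for cross edges, and explicit thresholds rather than $o(1)$ errors; also note the small slip that a size deviation of order $\sqrt n$ does not have vanishing probability under McDiarmid, though the threshold $n^{2/3}$ you actually use later does, so nothing breaks.
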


\begin{proof}
We use a random coloring argument and color the vertices independently using probabilities obtained from the graphon relabeling. We then show that the desired properties hold with positive probability and fix one coloring for which they hold.

Let $(a_{ij})_{i,j=1}^n$ be the adjacency matrix of $G$. Thus
\[
a_{ij}=a_{ji},
\qquad
a_{ii}=0.
\]

Let $W_q^\star$ and its canonical blocks $A_1,\dots,A_q$ be as in
Notation~\ref{not:Wqstar}. By that notation,
\[
\delta_\square(W_G,B_q^\star)
=
\delta_\square(W_G,W_q^\star)
<
\rho.
\]
Since the infimum in the definition of cut distance is strictly smaller
than $\rho$, there is a measure-preserving bijection
$\psi:[0,1]\to[0,1]$ such that
\[
\|W_G-(W_q^\star)^\psi\|_\square<\rho.
\]
Set
\[
\phi\coloneqq \psi^{-1}.
\]
Since the cut norm is invariant under measure-preserving relabelings, we get
\begin{equation}
\label{eq:rounding-cut-norm-closeness}
\begin{aligned}
\|W_G^\phi-W_q^\star\|_\square
&=
\|(W_G-(W_q^\star)^\psi)^\phi\|_\square \\
&=
\|W_G-(W_q^\star)^\psi\|_\square \\
&<
\rho.
\end{aligned}
\end{equation}

Let $I_1,\dots,I_n$ be the intervals from
Definition~\ref{def:empirical-graphon}. These intervals form a measurable
partition of $[0,1]$.

Define
\[
J_i\coloneqq \phi^{-1}(I_i),
\qquad i\in[n].
\]
Then $J_1,\dots,J_n$ form a measurable partition of $[0,1]$, and
\[
\lambda(J_i)=\frac1n
\qquad\text{for every }i\in[n].
\]

For $i\in[n]$ and $\alpha\in[q]$, set
\[
p_{i,\alpha}
\coloneqq
n\lambda(J_i\cap A_\alpha).
\]
For each fixed $i$, these numbers can be used as probabilities:
\[
0\le p_{i,\alpha}\le1
\qquad\text{and}\qquad
\sum_{\alpha=1}^q p_{i,\alpha}=1.
\]
Also, for each fixed $\alpha$,
\[
\sum_{i=1}^n p_{i,\alpha}
=
n\lambda(A_\alpha)
=
\frac nq.
\]

We now define an auxiliary random coloring of the vertices. Let
\[
\Sigma\coloneqq(\xi_1,\dots,\xi_n)
\]
be an $[q]^n$-valued random vector whose coordinates are independent and
satisfy
\[
\mbbP_{\mathrm{col}}(\xi_i=\alpha)=p_{i,\alpha},
\qquad i\in[n],\ \alpha\in[q].
\]
Equivalently, for every $C=(C_1,\dots,C_n)\in[q]^n$,
\[
\mbbP_{\mathrm{col}}(\Sigma=C)
=
\prod_{i=1}^n p_{i,C_i}.
\]
Here $\mbbP_{\mathrm{col}}$ and $\mbbE_{\mathrm{col}}$ denote probability
and expectation with respect to this product probability measure on $[q]^n$.

For a coloring $C=(C_1,\dots,C_n)\in[q]^n$, define its color classes by
\[
V_\alpha(C)
\coloneqq
\{i\in[n]:C_i=\alpha\},
\qquad \alpha\in[q].
\]

\smallskip
\noindent
\emph{The parts are almost balanced.}
For every $\alpha\in[q]$ and $i\in[n]$, set
\[
Z_i^{(\alpha)}
\coloneqq
\mathbf{1}_{\{\xi_i=\alpha\}}.
\]
For each fixed $\alpha$, the random variables
$Z_1^{(\alpha)},\dots,Z_n^{(\alpha)}$ are independent and satisfy
\[
0\le Z_i^{(\alpha)}\le1
\qquad\text{almost surely}.
\]
Moreover,
\[
|V_\alpha(\Sigma)|
=
\sum_{i=1}^n Z_i^{(\alpha)}
\]
and
\[
\mbbE_{\mathrm{col}}[|V_\alpha(\Sigma)|]
=
\sum_{i=1}^n\mbbE_{\mathrm{col}}[Z_i^{(\alpha)}]
=
\sum_{i=1}^n p_{i,\alpha}
=
\frac nq.
\]
Applying Lemma~\ref{lem:prob-tools-balance}\textup{(ii)} to
$Z_1^{(\alpha)},\dots,Z_n^{(\alpha)}$, with $m=n$ and
\[
t=\frac{\varepsilon n}{10q},
\]
gives
\[
\mbbP_{\mathrm{col}}\left(
\left||V_\alpha(\Sigma)|-\frac nq\right|
\ge
\frac{\varepsilon n}{10q}
\right)
\le
2\exp\left(-\frac{\varepsilon^2}{50q^2}n\right).
\]

Define
\[
\Omega_{\mathrm{bal}}
\coloneqq
\left\{
C\in[q]^n:
\left||V_\alpha(C)|-\frac nq\right|
\le
\frac{\varepsilon n}{10q}
\ \text{for every }\alpha\in[q]
\right\}.
\]
By the union bound,
\begin{equation}
\label{eq:rounding-bal-failure}
\mbbP_{\mathrm{col}}(\Omega_{\mathrm{bal}}^c)
\le
2q\exp\left(-\frac{\varepsilon^2}{50q^2}n\right).
\end{equation}

If $C\in\Omega_{\mathrm{bal}}$, then for every $\alpha\in[q]$,
\[
\left||V_\alpha(C)|-\frac nq\right|
\le
\frac{\varepsilon n}{10q}
\le
\varepsilon n.
\]
Moreover, since $\varepsilon\le1$,
\[
|V_\alpha(C)|
\ge
\frac nq-\frac{\varepsilon n}{10q}
\ge
\frac{9n}{10q}.
\]
Thus, if $n\ge2q$, then $|V_\alpha(C)|\ge9/5$. Since $|V_\alpha(C)|$ is an integer, every part has at least two vertices.

\smallskip
\noindent
After relabeling by $\phi$, the empirical graphon $W_G^\phi$ is equal to $a_{ij}$
on $J_i\times J_j$, except possibly on a null set. Hence, for all
$\alpha,\beta\in[q]$,
\begin{align}
\int_{A_\alpha\times A_\beta}W_G^\phi(x,y)\,dx\,dy
&=
\sum_{i,j=1}^n
a_{ij}\lambda(J_i\cap A_\alpha)\lambda(J_j\cap A_\beta) \notag\\
&=
\frac1{n^2}
\sum_{i,j=1}^n a_{ij}p_{i,\alpha}p_{j,\beta}.
\label{eq:coloring-integral-identity}
\end{align}

\smallskip
\noindent
\emph{Edges inside the parts.}
Fix $\alpha\in[q]$. Then
\[
2e_G(V_\alpha(\Sigma))
=
\sum_{i,j=1}^n
a_{ij}
\mathbf{1}_{\{\xi_i=\alpha\}}
\mathbf{1}_{\{\xi_j=\alpha\}}.
\]
The diagonal terms vanish because $a_{ii}=0$. For $i\neq j$, independence gives
\[
\mbbE_{\mathrm{col}}
\left[
\mathbf{1}_{\{\xi_i=\alpha\}}
\mathbf{1}_{\{\xi_j=\alpha\}}
\right]
=
p_{i,\alpha}p_{j,\alpha}.
\]
Using \eqref{eq:coloring-integral-identity}, we obtain
\[
\frac{2}{n^2}
\mbbE_{\mathrm{col}}[e_G(V_\alpha(\Sigma))]
=
\int_{A_\alpha\times A_\alpha}W_G^\phi.
\]

The graphon $W_q^\star$ is $0$ on $A_\alpha\times A_\alpha$. Therefore
\[
0
\le
\int_{A_\alpha\times A_\alpha}W_G^\phi
=
\int_{A_\alpha\times A_\alpha}(W_G^\phi-W_q^\star)
\le
\|W_G^\phi-W_q^\star\|_\square
<
\rho.
\]
Hence
\[
\mbbE_{\mathrm{col}}[e_G(V_\alpha(\Sigma))]
\le
\frac{\rho n^2}{2}.
\]
Summing over all $\alpha\in[q]$, we get
\[
\mbbE_{\mathrm{col}}\left[
\sum_{\alpha=1}^q e_G(V_\alpha(\Sigma))
\right]
\le
\frac{q\rho n^2}{2}.
\]

For $C\in[q]^n$, set
\[
Y(C)
\coloneqq
\sum_{\alpha=1}^q e_G(V_\alpha(C)).
\]
Define
\[
\Omega_{\mathrm{in}}
\coloneqq
\left\{
C\in[q]^n:
Y(C)\le \frac{\varepsilon n^2}{10q}
\right\}.
\]
By Lemma~\ref{lem:prob-tools-balance}\textup{(i)},
\begin{equation}
\label{eq:rounding-in-failure}
\begin{aligned}
\mbbP_{\mathrm{col}}(\Omega_{\mathrm{in}}^c)
&\le
\frac{\mbbE_{\mathrm{col}}[Y(\Sigma)]}
{\varepsilon n^2/(10q)} \\
&\le
\frac{q\rho n^2/2}{\varepsilon n^2/(10q)} \\
&=
\frac{5q^2\rho}{\varepsilon} \\
&=
\frac1{20}.
\end{aligned}
\end{equation}
Here the last equality uses $\rho=\varepsilon/(100q^2)$.

Lemma~\ref{lem:cauchy-schwarz}\textup{(ii)} gives
\[
\sum_{\alpha=1}^q\binom{|V_\alpha(C)|}{2}
=
\frac12\left(\sum_{\alpha=1}^q |V_\alpha(C)|^2-n\right)
\ge
\frac12\left(\frac{n^2}{q}-n\right).
\]
If $n\ge2q$, then
\[
\sum_{\alpha=1}^q\binom{|V_\alpha(C)|}{2}
\ge
\frac{n^2}{4q}.
\]
Thus every $C\in\Omega_{\mathrm{in}}$ satisfies
\[
\frac{\sum_{\alpha=1}^q e_G(V_\alpha(C))}
{\sum_{\alpha=1}^q\binom{|V_\alpha(C)|}{2}}
\le
\frac{\varepsilon n^2/(10q)}{n^2/(4q)}
=
\frac{2\varepsilon}{5}
<
\varepsilon.
\]

\smallskip
\noindent
\emph{Edges between distinct parts.}
For $1\le\alpha<\beta\le q$ and $C\in[q]^n$, define
\[
F_{\alpha,\beta}(C)
\coloneqq
e_G(V_\alpha(C),V_\beta(C)).
\]
Fix $1\le\alpha<\beta\le q$. Since $V_\alpha(\Sigma)$ and
$V_\beta(\Sigma)$ are disjoint,
\[
F_{\alpha,\beta}(\Sigma)
=
\sum_{i,j=1}^n
a_{ij}
\mathbf{1}_{\{\xi_i=\alpha\}}
\mathbf{1}_{\{\xi_j=\beta\}}.
\]
Each edge between the two color classes is counted exactly once. The
diagonal terms vanish because $a_{ii}=0$. For $i\neq j$, independence gives
\[
\mbbE_{\mathrm{col}}
\left[
\mathbf{1}_{\{\xi_i=\alpha\}}
\mathbf{1}_{\{\xi_j=\beta\}}
\right]
=
p_{i,\alpha}p_{j,\beta}.
\]
Therefore
\[
\mbbE_{\mathrm{col}}[F_{\alpha,\beta}(\Sigma)]
=
\sum_{i,j=1}^n a_{ij}p_{i,\alpha}p_{j,\beta}.
\]
Using \eqref{eq:coloring-integral-identity}, we obtain
\[
\frac1{n^2}
\mbbE_{\mathrm{col}}[F_{\alpha,\beta}(\Sigma)]
=
\int_{A_\alpha\times A_\beta}W_G^\phi(x,y)\,dx\,dy.
\]

On $A_\alpha\times A_\beta$, the graphon $W_q^\star$ equals $1/2$.
Since
\[
\lambda^2(A_\alpha\times A_\beta)=\frac1{q^2},
\]
we have
\[
\int_{A_\alpha\times A_\beta}
W_q^\star(x,y)\,dx\,dy
=
\frac1{2q^2}.
\]
By Definition~\ref{defin:cutnorm}, applied with
$S=A_\alpha$ and $T=A_\beta$, and by
\eqref{eq:rounding-cut-norm-closeness},
\begin{align*}
&
\left|
\int_{A_\alpha\times A_\beta}W_G^\phi
-
\int_{A_\alpha\times A_\beta}W_q^\star
\right| \\
&=
\left|
\int_{A_\alpha\times A_\beta}
(W_G^\phi-W_q^\star)
\right| \\
&\le
\|W_G^\phi-W_q^\star\|_\square \\
&<
\rho.
\end{align*}
Consequently,
\begin{equation}
\label{eq:rounding-cross-mean-bound}
\left|
\mbbE_{\mathrm{col}}[F_{\alpha,\beta}(\Sigma)]
-
\frac{n^2}{2q^2}
\right|
\le
\rho n^2.
\end{equation}
Thus $n^2/(2q^2)$ is the deterministic target value; in general it is
not the exact value of
$\mbbE_{\mathrm{col}}[F_{\alpha,\beta}(\Sigma)]$.

Changing one coordinate of $C$ can change the counted status only of edges
incident to the corresponding vertex. There are at most $n-1$ such edges,
and each contributes at most $1$ to the change. Hence
$F_{\alpha,\beta}$ changes by at most $n-1$, and therefore by at most $n$. By
Lemma~\ref{lem:prob-tools-balance}\textup{(iii)}, with
\[
t=\frac{\varepsilon n^2}{40q^2},
\]
we obtain
\[
\mbbP_{\mathrm{col}}\left(
\left|
F_{\alpha,\beta}(\Sigma)
-
\mbbE_{\mathrm{col}}[F_{\alpha,\beta}(\Sigma)]
\right|
>
\frac{\varepsilon n^2}{40q^2}
\right)
\le
2\exp\left(-\frac{\varepsilon^2}{800q^4}n\right).
\]

Define
\[
\Omega_{\mathrm{cross}}
\coloneqq
\left\{
C\in[q]^n:
\begin{array}{l}
\displaystyle
\left|
F_{\alpha,\beta}(C)
-
\mbbE_{\mathrm{col}}[F_{\alpha,\beta}(\Sigma)]
\right|
\le
\displaystyle\frac{\varepsilon n^2}{40q^2}
\\[2mm]
\text{for every }1\le\alpha<\beta\le q
\end{array}
\right\}.
\]
By the union bound,
\begin{equation}
\label{eq:rounding-cross-failure}
\mbbP_{\mathrm{col}}(\Omega_{\mathrm{cross}}^c)
\le
q(q-1)\exp\left(-\frac{\varepsilon^2}{800q^4}n\right).
\end{equation}

If $C\in\Omega_{\mathrm{cross}}$, then for every $\alpha<\beta$,
the definition of $\Omega_{\mathrm{cross}}$,
\eqref{eq:rounding-cross-mean-bound}, and the triangle inequality give
\begin{equation}
\label{eq:rounding-cross-edge-count-bound}
\begin{aligned}
&
\left|
e_G(V_\alpha(C),V_\beta(C))
-
\frac{n^2}{2q^2}
\right| \\
&\le
\left|
F_{\alpha,\beta}(C)
-
\mbbE_{\mathrm{col}}[F_{\alpha,\beta}(\Sigma)]
\right|
+
\left|
\mbbE_{\mathrm{col}}[F_{\alpha,\beta}(\Sigma)]
-
\frac{n^2}{2q^2}
\right| \\
&\le
\frac{\varepsilon n^2}{40q^2}
+
\rho n^2 \\
&=
\frac{7\varepsilon n^2}{200q^2} \\
&\le
\frac{\varepsilon n^2}{20q^2}.
\end{aligned}
\end{equation}

Assume also that $C\in\Omega_{\mathrm{bal}}$. Let
\[
|V_\alpha(C)|=\frac nq+\Delta_\alpha,
\qquad
|V_\beta(C)|=\frac nq+\Delta_\beta.
\]
Then
\[
|\Delta_\alpha|,|\Delta_\beta|
\le
\frac{\varepsilon n}{10q}.
\]
Hence
\begin{align*}
\left|
|V_\alpha(C)||V_\beta(C)|-\frac{n^2}{q^2}
\right|
&\le
\frac nq\bigl(|\Delta_\alpha|+|\Delta_\beta|\bigr)
+
|\Delta_\alpha\Delta_\beta|  \\
&\le
\frac{\varepsilon n^2}{5q^2}
+
\frac{\varepsilon^2 n^2}{100q^2}  \\
&\le
\frac{21\varepsilon n^2}{100q^2}  \\
&\le
\frac{\varepsilon n^2}{4q^2}.
\end{align*}
Therefore
\begin{equation}
\label{eq:rounding-part-product-bound}
\left|
\frac12|V_\alpha(C)||V_\beta(C)|
-
\frac{n^2}{2q^2}
\right|
\le
\frac{\varepsilon n^2}{8q^2}.
\end{equation}
Combining \eqref{eq:rounding-cross-edge-count-bound} and
\eqref{eq:rounding-part-product-bound}, and using the triangle inequality,
gives
\begin{align*}
&
\left|
e_G(V_\alpha(C),V_\beta(C))
-
\frac12|V_\alpha(C)||V_\beta(C)|
\right| \\
&\le
\left|
e_G(V_\alpha(C),V_\beta(C))
-
\frac{n^2}{2q^2}
\right|
+
\left|
\frac{n^2}{2q^2}
-
\frac12|V_\alpha(C)||V_\beta(C)|
\right| \\
&\le
\frac{\varepsilon n^2}{20q^2}
+
\frac{\varepsilon n^2}{8q^2}
=
\frac{7\varepsilon n^2}{40q^2}.
\end{align*}

The balance event also gives
\[
|V_\alpha(C)||V_\beta(C)|
\ge
\frac{81}{100}\frac{n^2}{q^2}
\ge
\frac45\frac{n^2}{q^2}.
\]
Thus
\begin{align*}
\left|
\frac{e_G(V_\alpha(C),V_\beta(C))}
{|V_\alpha(C)||V_\beta(C)|}
-
\frac12
\right|
&\le
\frac{7\varepsilon n^2/(40q^2)}{(4/5)n^2/q^2}  \\
&=
\frac{7\varepsilon}{32}  \\
&<
\varepsilon.
\end{align*}
By symmetry, the same estimate holds for every ordered pair of distinct parts.

\smallskip
\noindent
\emph{Fixing a coloring with the desired properties.}
Choose $N_{\mathrm{det}}(\varepsilon,q)$ so large that, for every
$n\ge N_{\mathrm{det}}$,
\begin{equation}
\label{eq:rounding-Ndet-conditions}
\begin{aligned}
n&\ge2q, \\
2q\exp\left(-\frac{\varepsilon^2}{50q^2}n\right)
&\le\frac1{40}, \\
q(q-1)\exp\left(-\frac{\varepsilon^2}{800q^4}n\right)
&\le\frac1{40}.
\end{aligned}
\end{equation}
It follows from \eqref{eq:rounding-bal-failure},
\eqref{eq:rounding-in-failure},
\eqref{eq:rounding-cross-failure}, and
\eqref{eq:rounding-Ndet-conditions} that
\[
\mbbP_{\mathrm{col}}(\Omega_{\mathrm{bal}}^c)
\le
\frac1{40},
\qquad
\mbbP_{\mathrm{col}}(\Omega_{\mathrm{in}}^c)
\le
\frac1{20},
\qquad
\mbbP_{\mathrm{col}}(\Omega_{\mathrm{cross}}^c)
\le
\frac1{40}.
\]
Therefore
\begin{align*}
\mbbP_{\mathrm{col}}
\bigl(
\Omega_{\mathrm{bal}}\cap\Omega_{\mathrm{in}}\cap\Omega_{\mathrm{cross}}
\bigr)
&\ge
1-\frac1{40}-\frac1{20}-\frac1{40}  \\
&=
\frac9{10}
>
0.
\end{align*}
So there exists a coloring
\[
C\in
\Omega_{\mathrm{bal}}\cap\Omega_{\mathrm{in}}\cap\Omega_{\mathrm{cross}}.
\]
Set
\[
V_\alpha\coloneqq V_\alpha(C),
\qquad \alpha\in[q].
\]
The estimates above show that this partition has the required part sizes, the required
between-part edge densities, and the required bound on the number of edges inside the
parts.

Finally, let $G'$ be obtained from $G$ by deleting all edges inside the parts
$V_1,\dots,V_q$. Then $G'$ is $q$-partite, and
\[
|E(G)\mathbin\triangle E(G')|
=
\sum_{\alpha=1}^q e_G(V_\alpha)
\le
\frac{\varepsilon n^2}{10q}.
\]
By the definition of $d_{\mathrm{edit}}$ in Section~\ref{sec:introduction},
\[
d_{\mathrm{edit}}(G,G')
=
\frac{|E(G)\mathbin\triangle E(G')|}{n^2}
\le
\frac{\varepsilon}{10q}.
\]
\end{proof}
We now use the cut-distance concentration theorem to prove the finite-graph
structure theorem.

\begin{proof}[Proof of Theorem~\ref{thm:main-clique}\textup{(iii)}]
Fix $\varepsilon>0$. Since Lemma~\ref{lem:det-round-Bqstar} is stated for
parameters at most $1$, set
\[
\eta\coloneqq \min\{\varepsilon,1\},
\qquad
\rho\coloneqq \frac{\eta}{100q^2}.
\]

Let $\mathsf G_n\sim\mbbP_{n,r}$. Applying
Theorem~\ref{thm:main-clique}\textup{(ii)} with the parameter $\rho$, we get constants
\[
c_0=c_0(\rho,r)>0
\qquad\text{and}\qquad
N_0=N_0(\rho,r,w)
\]
such that, for every $n\ge N_0$,
\[
\mbbP\left(
\delta_\square(W_{\mathsf G_n},B_q^\star)\ge \rho
\right)
\le
\exp(-c_0n^2).
\]

Let
\[
N_{\mathrm{det}}=N_{\mathrm{det}}(\eta,q)
\]
be the threshold from Lemma~\ref{lem:det-round-Bqstar}, and define
\[
n_0\coloneqq \max\{N_0,N_{\mathrm{det}}\}.
\]
Then, for every $n\ge n_0$, with probability at least $1-\exp(-c_0n^2)$, we have
\[
\delta_\square(W_{\mathsf G_n},B_q^\star)<\rho.
\]

On this event, Lemma~\ref{lem:det-round-Bqstar}, applied with $\eta$ in place of
$\varepsilon$, gives a partition
\[
[n]=V_1\sqcup\cdots\sqcup V_q
\]
satisfying all the conclusions of that lemma with parameter $\eta$. Since
\[
\eta\le \varepsilon,
\]
the same partition satisfies the required bounds in Theorem~\ref{thm:main-clique}\textup{(iii)} with
parameter $\varepsilon$.

In particular, if $\mathsf G_n'$ is obtained from $\mathsf G_n$ by deleting all
edges whose endpoints lie in the same part, then $\mathsf G_n'$ is $q$-partite and
\[
d_{\mathrm{edit}}(\mathsf G_n,\mathsf G_n')
\le
\frac{\eta}{10q}
\le
\frac{\varepsilon}{10q}
\le
\frac{\varepsilon}{2},
\]
because $q\ge2$.

Thus the theorem holds with
\[
c\coloneqq c_0.
\]
Since $\rho=\eta/(100q^2)$, and since $q=r-1$, the constant $c$ depends only on
$\varepsilon$ and $r$. The threshold $n_0$ may also depend on $w$.
\end{proof}


\section{Examples and consequences}
\label{sec:examples-consequences}

\noindent
We first discuss the boundary case $r=2$ and the first case covered by
Theorem~\ref{thm:main-clique}, namely $r=3$. We then show that the induced
subgraph on any fixed number of labeled vertices converges in distribution
to a stochastic block model, derive the limiting clique densities, and give
an exponential bound for colorability with fewer than $r-1$ colors. Finally,
we extend the model to several fixed clique penalties.

\subsection{The case $r=2$}
\label{sec:r-equals-two}

The notions defined in Section~\ref{sec:model}
are straightforwardly extended to the case $r = 2$, and we get
\[
N_{2}(G)=n^2-2|E(G)|,
\]
and 
\[
\widetilde Z_{n,2}
=
\left(1+e^{-2w}\right)^{\binom n2}.
\]
The possible edges are therefore independent, and each is present with
probability
\[
\frac{e^{-2w}}{1+e^{-2w}}
=
\frac1{1+e^{2w}}.
\]
Thus $\mbbP_{n,2}$ is exactly the
$G\left(n,1/(1+e^{2w})\right)$ distribution.
In particular, the $(r-1)$-partite behavior in
Theorem~\ref{thm:main-clique} begins at $r=3$.

\subsection{The case $r=3$}
\label{sec:triangle-case}

Here $q=r-1=2$. By Corollary~\ref{cor:ERGM}, the reduced mass of a graph $G$ is
\[
e^{-6wK_3(G)}.
\]
Thus triangles are penalized. By
Theorem~\ref{thm:main-clique}\textup{(ii)}, the empirical graphon converges
in probability, in cut distance, to the class represented by
$W_2^\star$. The canonical representative $W_2^\star$ has two equal blocks
and takes value $0$ within each block and $1/2$ between the blocks.

\subsection{A stochastic block model limit}
\label{sec:sbm-limit}

For $m\in\mbbN$ and probability measures $\alpha$ and $\beta$ on
$\mbG_m$, their \emph{total variation distance} is
\[
d_{\mathrm{TV}}(\alpha,\beta)
\coloneqq
\frac12\sum_{J\in\mbG_m}|\alpha(J)-\beta(J)|.
\]

\begin{defin}\label{defin:induced-density}
For $m\in\mbbN$, $J\in\mbG_m$, and a graphon $W$, define the
\emph{induced density} of $J$ in $W$ by
\[
t_{\mathrm{ind}}(J,W)
\coloneqq
\int_{[0,1]^m}
\prod_{\{i,j\}\in E(J)}W(x_i,x_j)
\prod_{\substack{1\le i<j\le m\\
                  \{i,j\}\notin E(J)}}
\bigl(1-W(x_i,x_j)\bigr)
\,dx_1\cdots dx_m.
\]
This is the induced-density formula in
\cite[Sec.~7.2, Eq.~(7.3)]{Lovasz2012}. Under the graphon sampling
construction from Section~\ref{sec:graphons}, it equals the probability of
obtaining the labeled graph $J$; see
\cite[Sec.~10.1, immediately before Eq.~(10.3)]{Lovasz2012}.
\end{defin}

\begin{cor}[Stochastic block model limit for fixed induced subgraphs]
\label{cor:sbm-limit}
Fix an integer $r\ge3$, set $q=r-1$, and let $w>0$ and $m\in\mbbN$.
For every $n\ge m$, let $\mathsf G_n\sim\mbbP_{n,r}$, and let
$\mathsf H_{n,m}$ be the subgraph of $\mathsf G_n$ induced by $[m]$.

Define a random graph $\mathsf S_m^{(q)}$ on $[m]$ as follows. Choose
$C_1,\dots,C_m$ independently and uniformly from $[q]$. Given these
labels, include the possible edges independently, with probability
\[
\mbbP_{\mathrm{SBM}}
\bigl(
\{i,j\}\in E(\mathsf S_m^{(q)})
\mid C_1,\dots,C_m
\bigr)
=
\begin{cases}
0, & C_i=C_j,\\
\tfrac12, & C_i\ne C_j,
\end{cases}
\qquad 1\le i<j\le m.
\]
Here $\mbbP_{\mathrm{SBM}}(S_m^{(q)})$ denotes probability of getting $S_m^{(p)}$ under this construction.
Define probability measures $\nu_{n,m}$ and $\nu_m^{(q)}$ on $\mbG_m$ by
\[
\nu_{n,m}(J)
\coloneqq
\mbbP(\mathsf H_{n,m}=J),
\qquad
\nu_m^{(q)}(J)
\coloneqq
\mbbP_{\mathrm{SBM}}(\mathsf S_m^{(q)}=J)
\quad \text{ for } \ J\in\mbG_m.
\]
Then
\[
\lim_{n\to\infty}
d_{\mathrm{TV}}\bigl(\nu_{n,m},\nu_m^{(q)}\bigr)
=
0.
\]
The limiting measure $\nu_m^{(q)}$ depends on $q=r-1$ but not on $w$.
\end{cor}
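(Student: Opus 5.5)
The plan is to combine exchangeability of $\mbbP_{n,r}$ with the cut-distance concentration in Theorem~\ref{thm:main-clique}\textup{(ii)} and the standard sampling lemma for graphons. First, $\mbbP_{n,r}(G)$ depends only on $K_r(G)$, which is invariant under relabelling of the vertices. Hence the law of $\mathsf G_n$ is invariant under permutations of $[n]$. Consequently, for every $J\in\mbG_m$, $\nu_{n,m}(J)$ equals the expectation over $\mathsf G_n$ of the probability that, for a uniformly random injection $\sigma:[m]\to[n]$, the induced labelled graph $\mathsf G_n[\sigma(1),\dots,\sigma(m)]$ equals $J$. Call this conditional quantity $t_{\mathrm{inj}}(J,\mathsf G_n)$. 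The standard comparison of injective with all maps gives
\[
|t_{\mathrm{inj}}(J,G)-t_{\mathrm{ind}}(J,W_G)|\le \binom m2\frac1n
\]
for every $G\in\mbG_n$. This holds because $t_{\mathrm{ind}}(J,W_G)$ is the same probability computed with independent uniform vertices, and the two samples coincide off the event that two of them collide.

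Second, I identify the limit: $\nu_m^{(q)}(J)=t_{\mathrm{ind}}(J,W_q^\star)$. Indeed, sampling $X_i$ uniform in $[0,1]$ and reading the block $A_{C_i}\ni X_i$ gives independent uniform labels, and the edge probabilities are then $0$ or $\tfrac12$, exactly as in $\mathsf S_m^{(q)}$. This uses the sampling interpretation of $t_{\mathrm{ind}}$ in Definition~\ref{defin:induced-density}.

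Third, I need Lipschitz continuity of $t_{\mathrm{ind}}(J,\cdot)$ in cut distance. Expanding each factor $1-W$ writes $t_{\mathrm{ind}}(J,W)$ as a signed sum of at most $2^{\binom m2}$ homomorphism densities $t(F,W)$ with $F\supseteq J$ on $[m]$. The telescoping argument of Lemma~\ref{lem:Kr-cut-lipschitz} works verbatim for any simple graph $F$, giving $|t(F,U)-t(F,V)|\le |E(F)|\,\delta_\square(U,V)$. Hence
\[
|t_{\mathrm{ind}}(J,U)-t_{\mathrm{ind}}(J,V)|\le 2^{\binom m2}\binom m2\,\delta_\square(U,V).
\]
Here I also use the relabelling invariance of $t(F,\cdot)$, proved there for $K_r$. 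Writing out this generalisation of Lemma~\ref{lem:Kr-cut-lipschitz} is the step that needs the most care, though the argument is routine.

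Finally, I combine the pieces. Fix $\eta>0$ and set $C_m\coloneqq 2^{\binom m2}\binom m2$. Then
\[
|\nu_{n,m}(J)-\nu_m^{(q)}(J)|\le \binom m2\frac1n+C_m\,\mbbE\bigl[\min\{1,\delta_\square(W_{\mathsf G_n},W_q^\star)\}\bigr].
\]
Split the expectation on the event $\delta_\square(W_{\mathsf G_n},B_q^\star)<\eta$, recalling $\delta_\square(W,B_q^\star)=\delta_\square(W,W_q^\star)$ from Notation~\ref{not:Wqstar}. This bounds the expectation by $\eta+e^{-cn^2}$ for $n\ge N(\eta)$, using Theorem~\ref{thm:main-clique}\textup{(ii)}. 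Summing over the $2^{\binom m2}$ graphs $J$ and letting $n\to\infty$ and then $\eta\to0$ gives $d_{\mathrm{TV}}(\nu_{n,m},\nu_m^{(q)})\to0$. Independence from $w$ is then immediate, since $\nu_m^{(q)}$ was defined without reference to $w$.
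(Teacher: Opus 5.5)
Your proposal is correct and follows essentially the same route as the paper. Both arguments use exchangeability of $\mbbP_{n,r}$, the $\binom m2/n$ comparison between injective and independent vertex sampling, the identification $\nu_m^{(q)}(J)=t_{\mathrm{ind}}(J,W_q^\star)$, cut-continuity of $t_{\mathrm{ind}}(J,\cdot)$ via inclusion--exclusion into homomorphism densities, and Theorem~\ref{thm:main-clique}\textup{(ii)}. The only difference is that you derive an explicit Lipschitz constant by extending Lemma~\ref{lem:Kr-cut-lipschitz} to general $F$, whereas the paper cites cut-continuity of $t(F,\cdot)$ from the graph-limit literature.
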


\begin{proof}
Fix $J\in\mbG_m$. For the induced density from
Definition~\ref{defin:induced-density}, inclusion-exclusion gives
\cite[Sec.~7.2, Eq.~(7.4)]{Lovasz2012}
\[
t_{\mathrm{ind}}(J,W)
=
\sum_{\substack{F\in\mbG_m\\E(J)\subseteq E(F)}}
(-1)^{|E(F)\setminus E(J)|}t(F,W).
\]
By \cite[Thm.~3.7(a)]{BorgsChayesLovaszSosVesztergombi2008}, each
homomorphism density in this sum is continuous in cut distance.
Therefore $t_{\mathrm{ind}}(J,\cdot)$ is also continuous in cut distance.

Theorem~\ref{thm:main-clique}\textup{(ii)} and the final identity in
Notation~\ref{not:Wqstar} give, for every $\varepsilon>0$,
\[
\lim_{n\to\infty}
\mbbP\left(
\left|
t_{\mathrm{ind}}(J,W_{\mathsf G_n})
-
t_{\mathrm{ind}}(J,W_q^\star)
\right|
\ge\varepsilon
\right)
=
0.
\]
Since the absolute difference is bounded by $1$, for every $\varepsilon>0$,
its expectation is at most $\varepsilon$ plus the probability that it is at
least $\varepsilon$. Hence,
\begin{equation}\label{eq:sbm-induced-density-average}
\lim_{n\to\infty}
\mbbE\left[
\left|
t_{\mathrm{ind}}(J,W_{\mathsf G_n})
-
t_{\mathrm{ind}}(J,W_q^\star)
\right|
\right]
=
0.
\end{equation}

Define a probability measure $\bar\nu_{n,m}$ on $\mbG_m$ by
\[
\bar\nu_{n,m}(J)
\coloneqq
\mbbE\bigl[t_{\mathrm{ind}}(J,W_{\mathsf G_n})\bigr].
\]
By \eqref{eq:showing-ergm}, the probability assigned to a graph $G$ depends
only on $K_r(G)$, which is invariant under vertex relabeling. Hence the
distribution of $\mathsf G_n$ is invariant under vertex relabeling.
Consequently, if $\iota:[m]\to[n]$ is an independent uniformly chosen
injection, then $\mathsf H_{n,m}$ has the same distribution as the graph on
$[m]$ in which $\{i,j\}$ is an edge exactly when
$\{\iota(i),\iota(j)\}\in E(\mathsf G_n)$.

On the other hand, sampling from $W_G$ corresponds to choosing vertices of
$G$ independently, so the same vertex may be chosen more than once.
The probability of choosing the same vertex more than once is at most
$\binom m2/n$. Therefore, by
\cite[Eq.~(10.1)]{Lovasz2012},
\[
d_{\mathrm{TV}}(\nu_{n,m},\bar\nu_{n,m})
\le
\frac{\binom m2}{n}.
\]

By Notation~\ref{not:Wqstar}, graphon sampling from $W_q^\star$ is
exactly the construction of $\mathsf S_m^{(q)}$, therefore
\[
t_{\mathrm{ind}}(J,W_q^\star)
=
\nu_m^{(q)}(J).
\]
It follows that
\[
\begin{aligned}
d_{\mathrm{TV}}\bigl(\nu_{n,m},\nu_m^{(q)}\bigr)
&\le
\frac{\binom m2}{n}
+d_{\mathrm{TV}}\bigl(\bar\nu_{n,m},\nu_m^{(q)}\bigr)\\
&\le
\frac{\binom m2}{n}
+\frac12\sum_{J\in\mbG_m}
\mbbE\left[
\left|
t_{\mathrm{ind}}(J,W_{\mathsf G_n})
-
t_{\mathrm{ind}}(J,W_q^\star)
\right|
\right].
\end{aligned}
\]
The sum is finite, and each term tends to zero by
\eqref{eq:sbm-induced-density-average}. This proves the result.
\end{proof}

\begin{rem}[The limiting model is not Erd\H{o}s-R\'enyi]
\label{rem:sbm-not-erdos-renyi}
Let $q\ge2$ and $m\ge2$, and consider the model
$\mathsf S_m^{(q)}$ defined above. For $1\le i<j\le m$, let
\[
X_{ij}
\coloneqq
\mathbf{1}_{\{\{i,j\}\in E(\mathsf S_m^{(q)})\}}.
\]
For each edge,
\[
\mbbP_{\mathrm{SBM}}(X_{ij}=1)
=
\mbbP_{\mathrm{SBM}}(C_i\ne C_j)\,\frac12
=
\frac{q-1}{q}\cdot\frac12
=
\frac{q-1}{2q}.
\]

Any two distinct edge indicators are independent. This is immediate
for vertex-disjoint edges because they use different labels and
different edge choices. If the edges share a vertex, say $\{i,j\}$
and $\{i,k\}$, then
\[
\mbbP_{\mathrm{SBM}}(X_{ij}=X_{ik}=1)
=
\left(\frac{q-1}{q}\right)^2\frac14
=
\left(\frac{q-1}{2q}\right)^2,
\]
which is the product of their individual probabilities.

However, if $m\ge3$, the three indicators associated with the triangle
on $\{1,2,3\}$ are not independent as a triple, since
\[
\mbbP_{\mathrm{SBM}}(X_{12}=X_{13}=X_{23}=1)
=
\frac{(q-1)(q-2)}{8q^2},
\]
whereas independence would give
\[
\left(\frac{q-1}{2q}\right)^3.
\]
Indeed,
\[
\left(\frac{q-1}{2q}\right)^3
-
\frac{(q-1)(q-2)}{8q^2}
=
\frac{q-1}{8q^3}
>
0.
\]
Thus any two edge indicators are independent, but the three triangle
indicators are not independent as a triple. In particular,
$\mathsf S_m^{(q)}$ is not an Erd\H{o}s-R\'enyi graph when $m\ge3$.
\end{rem}

\subsection{Limiting clique densities}
\label{sec:limiting-clique-spectrum}

For any fixed integer $r \geq 3$
we now determine the limiting density, with respect to $\mbbP_{n, r}$, of $K_s$ for all $s \geq 2$.

\begin{cor}
\label{cor:limiting-clique-spectrum}
Fix an integer $r\ge3$ and let $w>0$. Let
$\mathsf G_n\sim\mbbP_{n,r}$, and set $q=r-1$. For every fixed integer
$s\ge2$,
\begin{equation}\label{eq:limiting-clique-spectrum-constant}
t(K_s,W_q^\star)
=
\begin{cases}
\displaystyle
\frac{q(q-1)\cdots(q-s+1)}
{q^s2^{\binom s2}},
& 2\le s\le q,\\[3mm]
0,
& s>q.
\end{cases}
\end{equation}
Moreover, for every $\eta>0$, there are constants
$c=c(\eta,r,s)>0$ and $N=N(\eta,r,s,w)\ge s$ such that, for every
$n\ge N$,
\begin{equation}\label{eq:limiting-clique-spectrum-concentration}
\mbbP_{n,r}\left(
\left\{
G\in\mbG_n:
\left|
\frac{s!\,K_s(G)}{n^s}
-
t(K_s,W_q^\star)
\right|
\ge\eta
\right\}
\right)
\le e^{-cn^2}.
\end{equation}
Consequently, for every $\eta>0$,
\begin{equation}\label{eq:limiting-clique-spectrum-proportion}
\lim_{n\to\infty}
\mbbP\left(
\left|
\frac{K_s(\mathsf G_n)}{\binom ns}
-
t(K_s,W_q^\star)
\right|
\ge\eta
\right)
=
0.
\end{equation}
Thus every fixed clique $K_s$ with $2\le s\le r-1$ has a positive
limiting density, while every fixed clique $K_s$ with $s\ge r$ has
limiting density zero.
\end{cor}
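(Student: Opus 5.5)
The proof has three parts: compute $t(K_s,W_q^\star)$ directly, obtain concentration from Theorem~\ref{thm:main-clique}\textup{(ii)} via the cut-Lipschitz bound, and convert to unordered counts by the normalization in Remark~\ref{rem:empirical-Kr-normalization}.

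\emph{Computing $t(K_s,W_q^\star)$.} Write the integral defining $t(K_s,W_q^\star)$ as a sum over block assignments $(a_1,\dots,a_s)\in[q]^s$ of the points $x_1,\dots,x_s$. Each assignment occurs with probability $q^{-s}$. If two indices share a block, the integrand vanishes. Otherwise every one of the $\binom s2$ factors equals $\tfrac12$. The number of injective assignments is $q(q-1)\cdots(q-s+1)$, which is $0$ when $s>q$. This gives \eqref{eq:limiting-clique-spectrum-constant}.

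\emph{Concentration.} Lemma~\ref{lem:Kr-cut-lipschitz} holds for every $r\ge2$, so apply it with $s$ in place of $r$:
\[
|t(K_s,W_G)-t(K_s,W_q^\star)|\le\binom s2\,\delta_\square(W_G,W_q^\star).
\]
By the final identity of Notation~\ref{not:Wqstar}, $\delta_\square(W_G,W_q^\star)=\delta_\square(W_G,B_q^\star)$. Lemma~\ref{lem:empirical-Kr-density}, applied with $s$, gives $t(K_s,W_G)=s!\,K_s(G)/n^s$. So the event in \eqref{eq:limiting-clique-spectrum-concentration} is contained in $\{\delta_\square(W_G,B_q^\star)\ge\eta/\binom s2\}$. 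Theorem~\ref{thm:main-clique}\textup{(ii)} with $\varepsilon=\eta/\binom s2$ bounds the probability of that event by $e^{-cn^2}$, with $c$ depending only on $\eta,r,s$ and a threshold depending also on $w$. Enlarge the threshold so that $N\ge s$.

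\emph{Passing to unordered counts.} For $n\ge s$, Remark~\ref{rem:empirical-Kr-normalization} gives
\[
\frac{K_s(G)}{\binom ns}=\frac{n^s}{n(n-1)\cdots(n-s+1)}\cdot\frac{s!\,K_s(G)}{n^s}.
\]
The prefactor tends to $1$, and $s!\,K_s(G)/n^s\le 1$. Hence for large $n$ the two normalizations differ by less than $\eta/2$, uniformly in $G$. Apply the concentration bound with $\eta/2$ to get \eqref{eq:limiting-clique-spectrum-proportion}.

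The final sentence of the corollary follows from \eqref{eq:limiting-clique-spectrum-constant}: the density is positive exactly when $s\le q=r-1$.

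The only point needing care is keeping the constants' dependence straight. $c$ must come from part (ii) of Theorem~\ref{thm:main-clique} and so depends only on $\eta,r,s$, while $w$ may enter only through $N$. Otherwise every step is a direct application of an earlier result.
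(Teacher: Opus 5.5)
Your proposal is correct and follows the paper's proof essentially step for step. It uses the same block-assignment computation of $t(K_s,W_q^\star)$, and the same reduction via Lemma~\ref{lem:empirical-Kr-density} and Lemma~\ref{lem:Kr-cut-lipschitz} (with $s$ in place of $r$) to Theorem~\ref{thm:main-clique}\textup{(ii)} with $\varepsilon=\eta/\binom s2$.

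The only small difference is in the final step. The paper argues that dividing by $n(n-1)\cdots(n-s+1)/n^s\to1$ preserves convergence in probability. You instead use $s!\,K_s(G)/n^s\le1$ to make the normalization error uniformly smaller than $\eta/2$, which in fact yields an $e^{-cn^2}$ bound for the unordered proportion as well.
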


\begin{proof}
Suppose first that $2\le s\le q$. By
Definition~\ref{defin:hom-density},
\[
t(K_s,W_q^\star)
=
\int_{[0,1]^s}
\prod_{1\le i<j\le s}
W_q^\star(x_i,x_j)
\,dx_1\cdots dx_s.
\]
Equivalently, choose $x_1,\dots,x_s$ independently and uniformly at random
from $[0,1]$. By Notation~\ref{not:Wqstar}, the index of the canonical block
containing each $x_i$ is uniform on $[q]$, and these block indices are
independent. Since $W_q^\star$ is zero within each block, the integrand is
nonzero only when $x_1,\dots,x_s$ lie in distinct blocks. The set of such
points has measure
\[
\frac{q(q-1)\cdots(q-s+1)}{q^s}.
\]
On this set, every factor in the integrand is equal to $1/2$. Since
$K_s$ has $\binom s2$ edges, it follows that
\[
t(K_s,W_q^\star)
=
\frac{q(q-1)\cdots(q-s+1)}
{q^s2^{\binom s2}}.
\]

If $s>q$, then among any $s$ points $x_1,\dots,x_s$, at least two lie in
the same canonical block. The corresponding factor in the defining
integral is zero. Hence
\[
t(K_s,W_q^\star)=0.
\]
This proves \eqref{eq:limiting-clique-spectrum-constant}.

By Lemma~\ref{lem:empirical-Kr-density}, with $s$ in place of $r$, every
graph $G\in\mbG_n$ satisfies
\[
t(K_s,W_G)=\frac{s!\,K_s(G)}{n^s}.
\]
Lemma~\ref{lem:Kr-cut-lipschitz} and
Notation~\ref{not:Wqstar} therefore give
\[
\left|
\frac{s!\,K_s(G)}{n^s}
-
t(K_s,W_q^\star)
\right|
\le
\binom s2\,\delta_\square(W_G,B_q^\star).
\]
Thus, if the left-hand side is at least $\eta$, then
\[
\delta_\square(W_G,B_q^\star)
\ge
\frac{\eta}{\binom s2}.
\]
Applying Theorem~\ref{thm:main-clique}\textup{(ii)} with
$\eta/\binom s2$ in place of $\varepsilon$ proves
\eqref{eq:limiting-clique-spectrum-concentration}.

Finally, for $n\ge s$, set
\[
X_n\coloneqq \frac{s!\,K_s(\mathsf G_n)}{n^s},
\qquad
a_n\coloneqq \frac{n(n-1)\cdots(n-s+1)}{n^s}.
\]
Then
\[
\frac{K_s(\mathsf G_n)}{\binom ns}=\frac{X_n}{a_n}.
\]
Equation~\eqref{eq:limiting-clique-spectrum-concentration} says that
$X_n$ converges in probability to $t(K_s,W_q^\star)$. Since $a_n>0$ and
\[
\lim_{n\to\infty}a_n=1,
\]
the quotient $X_n/a_n$ converges in probability to the same limit. This is
exactly \eqref{eq:limiting-clique-spectrum-proportion}.
\end{proof}

\begin{exam}
For example, let $r=4$, so that $q=3$, and let
$\mathsf G_n\sim\mbbP_{n,4}$. Then
\[
t(K_3,W_3^\star)
=
\frac{3\cdot2\cdot1}{3^3\,2^3}
=
\frac1{36},
\qquad
t(K_4,W_3^\star)=0.
\]
Hence, for every $\eta>0$,
\[
\lim_{n\to\infty}
\mbbP\left(
\left|
\frac{K_3(\mathsf G_n)}{\binom n3}
-
\frac1{36}
\right|
\ge\eta
\right)
=
0,
\qquad
\lim_{n\to\infty}
\mbbP\left(
\frac{K_4(\mathsf G_n)}{\binom n4}
\ge\eta
\right)
=
0.
\]
Thus a positive proportion of triples still form triangles,
even though the proportion of four-element sets that form a copy of $K_4$
tends to zero.
\end{exam}

\subsection{Colorability with fewer than
\texorpdfstring{$r-1$}{r-1} colors}
\label{sec:exact-lower-colorability}

\begin{cor}
\label{cor:exact-lower-colorability}
Fix an integer $r\ge3$ and let $w>0$. Set $q=r-1$, and let
$\mathsf G_n\sim\mbbP_{n,r}$. Recall that $\chi(\mathsf G_n)$ denotes the
chromatic number of $\mathsf G_n$. For every $n\in\mbbN$ and every integer
$k$ with $1\le k < q$,
\begin{equation}\label{eq:exact-lower-colorability-bound}
\mbbP\bigl(\chi(\mathsf G_n)\le k\bigr)
\le
\exp\left(
-\frac{(q-k)\ln 2}{2kq}n^2
+n\ln k
+\frac{q\ln 2}{8}
\right).
\end{equation}
Consequently, there are constants $c=c(r,k)>0$ and
$N=N(r,k)\in\mbbN$ such that
\[
\mbbP\bigl(\chi(\mathsf G_n)\le k\bigr)
\le e^{-cn^2}
\qquad\text{for every }n\ge N.
\]
In particular, when $r\ge4$, the probability that $\mathsf G_n$ is
$(r-2)$-partite tends to zero exponentially in $n^2$.
\end{cor}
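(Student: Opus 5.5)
The plan is a direct counting bound on the numerator, combined with the lower bound on the partition function from Lemma~\ref{lem:balanced-q-partite-lower}. By \eqref{eq:showing-ergm}, every graph has reduced mass at most $1$. Hence
\[
\mbbP\bigl(\chi(\mathsf G_n)\le k\bigr)
\le
\frac{\bigl|\{G\in\mbG_n:\chi(G)\le k\}\bigr|}{\widetilde Z_{n,r}}.
\]
The denominator satisfies $\widetilde Z_{n,r}\ge\exp(s_qn^2-q\ln 2/8)$ with $s_q=\frac{\ln 2}{2}(1-\frac1q)$. This yields the $+\frac{q\ln 2}{8}$ term in \eqref{eq:exact-lower-colorability-bound}.

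For the numerator, every graph with $\chi(G)\le k$ admits a proper coloring $c:[n]\to[k]$, possibly with empty classes. So I would take a union bound over the $k^n$ maps $c$; this produces the factor $e^{n\ln k}$. For a fixed $c$ with class sizes $n_1,\dots,n_k$, the graphs properly colored by $c$ are exactly the subsets of the cross pairs. Their number is $2^{M_c}$, where
\[
M_c=\tfrac12\Bigl(n^2-\sum_a n_a^2\Bigr)\le \tfrac12\Bigl(1-\tfrac1k\Bigr)n^2 .
\]
This uses Lemma~\ref{lem:cauchy-schwarz}(ii) with $m=k$, which gives $\sum_a n_a^2\ge n^2/k$. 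So the numerator is at most $k^n\exp\bigl(\frac{\ln2}{2}(1-\frac1k)n^2\bigr)$.

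Dividing, the $n^2$-coefficient in the exponent is
\[
\frac{\ln 2}{2}\Bigl[\Bigl(1-\frac1k\Bigr)-\Bigl(1-\frac1q\Bigr)\Bigr]=-\frac{(q-k)\ln 2}{2kq},
\]
which is exactly the claimed bound, valid for every $n$. For the consequence, $k<q$ makes this coefficient strictly negative. I would take $c$ to be half of it, and choose $N$ so that $n\ln k+q\ln2/8\le \frac{(q-k)\ln2}{4kq}n^2$ for all $n\ge N$. The case $k=1$ is trivial, since $\ln 1=0$.

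Finally, $(r-2)$-partite means $\chi\le r-2=q-1$, which is the case $k=q-1\ge 2$ when $r\ge4$. No step is a real obstacle here. The only care needed is to allow empty color classes, so that "at most $k$ colors" is covered by maps into $[k]$, and to check that the Cauchy--Schwarz bound holds for arbitrary nonnegative class sizes.
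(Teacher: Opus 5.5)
Your proposal is correct and follows the paper's proof essentially step for step. You use the same union bound over the $k^n$ colorings, the same Cauchy--Schwarz bound $\sum_a n_a^2\ge n^2/k$ on the number of cross pairs, and the same division by the lower bound $\widetilde Z_{n,r}\ge\exp(s_qn^2-q\ln 2/8)$ from Lemma~\ref{lem:balanced-q-partite-lower}. The only difference is cosmetic: the paper notes that $k$-colorable graphs are $K_r$-free and so have reduced mass exactly $1$, while you use only that every reduced mass is at most $1$, which already suffices for the upper bound.
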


\begin{proof}
Fix a map from $[n]$ to a set of $k$ colors, and let
$m_1,\dots,m_k$ be the sizes of its color classes. A graph for which this
map is a proper coloring can have edges only between different classes.
The number of pairs in different classes is
\[
\frac12\left(n^2-\sum_{i=1}^k m_i^2\right)
\le
\frac12\left(1-\frac1k\right)n^2,
\]
where the inequality follows from
Lemma~\ref{lem:cauchy-schwarz}\textup{(ii)}. Hence at most
\[
2^{(1-1/k)n^2/2}
\]
graphs have this fixed proper coloring. Since there are $k^n$ such maps,
\begin{equation}\label{eq:k-colorable-graph-count}
\bigl|\{G\in\mbG_n:\chi(G)\le k\}\bigr|
\le
k^n2^{(1-1/k)n^2/2}.
\end{equation}

A $k$-colorable graph has no clique with more than $k$ vertices. Since
$k<q<r$, every graph counted in
\eqref{eq:k-colorable-graph-count} is $K_r$-free and has reduced mass $1$.
Hence \eqref{eq:k-colorable-graph-count} and
\eqref{eq:balanced-q-partite-lower} give
\[
\mbbP\bigl(\chi(\mathsf G_n)\le k\bigr)
\le
\exp\left(
\frac{\ln 2}{2}\left(1-\frac1k\right)n^2
+n\ln k
-s_qn^2
+\frac{q\ln 2}{8}
\right).
\]
Using \eqref{eq:sq-definition} gives
\eqref{eq:exact-lower-colorability-bound}. Since $q-k>0$, the coefficient
of $n^2$ is negative, which proves the final estimate.
\end{proof}

\subsection{Several fixed clique penalties}
\label{sec:multi-clique-collapse}

We now consider a set-up with soft constraints for several clique sizes.
Fix an integer $\ell\ge3$. For each $k\in\{3,\dots,\ell\}$
let a weight $w_k \geq 0$ be fixed.
{\em Assume that at least one of these $w_k$ is positive} and set
\[
r\coloneqq
\min\{k\in\{3,\dots,\ell\}:w_k>0\},
\qquad
q\coloneqq r-1.
\]
Thus $w_k = 0$ for $k < r$.
For all 
$k\in\{3,\dots,\ell\}$ and every $G \in \mbG_n$, define
\[
N_{k}(G):=n^k-k!\,K_k(G).
\]
Hence $N_{k}(G)$ is the number of $k$-tuples of vertices of $G$ that do not form a $k$-clique in $G$.
We now define the ``{\em unnormalized mass}'' of
$G\in\mbG_n$ as the number
\[
\exp\left(\sum_{k=3}^{\ell}w_kN_k(G)\right)
=
\exp\left(\sum_{k=r}^{\ell}w_kn^k\right)
\exp\left(-\sum_{k=r}^{\ell}k!\,w_k\,K_k(G)\right).
\]
The first factor is independent of $G$ and cancels after normalization.
We therefore define the ``{\em reduced mass}'' of $G \in \mbG_n$ by
\[
M_n^{\mathrm{mc}}(G)
\coloneqq
\exp\left(-\sum_{k=r}^{\ell}k!\,w_k\,K_k(G)\right),
\]
and then the  corresponding reduced partition function, probability measure, and
reduced free energy are defined by
\begin{equation}\label{eq:multi-clique-definitions}
\begin{aligned}
\widetilde Z_n^{\mathrm{mc}}
&\coloneqq
\sum_{G\in\mbG_n}M_n^{\mathrm{mc}}(G),\\
\mbbP_n^{\mathrm{mc}}(A)
&\coloneqq
\frac{1}{\widetilde Z_n^{\mathrm{mc}}}
\sum_{G\in A}M_n^{\mathrm{mc}}(G),
\quad \text{ for } \ A\subseteq\mbG_n, \ \ \text{ and}\\
F_n^{\mathrm{mc}}
&\coloneqq
\frac{1}{n^2}\ln\widetilde Z_n^{\mathrm{mc}}.
\end{aligned}
\end{equation}

\begin{theor}[The smallest positively weighted clique determines the limit structure]
\label{thm:multi-clique-collapse}
Let
\(
\mathsf G_n^{\mathrm{mc}}\sim\mbbP_n^{\mathrm{mc}},
\)
that is, $\mathsf G_n$ is a random graph from $\mbG_n$ under the probability distribution $\mbbP_n^{\mathrm{mc}}$.
Then the following statements hold:

\begin{enumerate}
\item[\textup{(i)}]
\(
\lim_{n\to\infty}F_n^{\mathrm{mc}}
=
\tfrac{\ln 2}{2}\left(1-\frac{1}{r-1}\right).
\)

\item[\textup{(ii)}]
For every $\varepsilon>0$, there are constants
\[
c=c(\varepsilon,r)>0
\qquad\text{and}\qquad
N=N(\varepsilon,r,w_r)
\]
such that, for every $n\ge N$,
\[
\mbbP\left(
\delta_\square
\left(W_{\mathsf G_n^{\mathrm{mc}}},B_q^\star\right)
\ge\varepsilon
\right)
\le e^{-cn^2}.
\]

\item[\textup{(iii)}]
For every $\varepsilon>0$, there are constants
\[
c=c(\varepsilon,r)>0
\qquad\text{and}\qquad
N=N(\varepsilon,r,w_r)
\]
such that, for every $n\ge N$, with probability at least $1-e^{-cn^2}$,
there is a partition
\[
[n]=V_1\sqcup\cdots\sqcup V_q
\]
with the following properties:
\begin{enumerate}
\item For every $i\in[q]$,
\[
\bigl||V_i|-n/q\bigr|\le\varepsilon n.
\]

\item For all distinct  $i,j\in[q]$, 
\[
\left|
\frac{e_{\mathsf G_n^{\mathrm{mc}}}(V_i,V_j)}{|V_i||V_j|}
-\frac12
\right|
\le\varepsilon.
\]

\item The total number of edges inside parts is
\[
\sum_{i=1}^q e_{\mathsf G_n^{\mathrm{mc}}}(V_i)
\le
\frac{\varepsilon n^2}{10q}.
\]

\item The total density of edges inside the parts is at most $\varepsilon$:
\[
\frac{\sum_{i=1}^q e_{\mathsf G_n^{\mathrm{mc}}}(V_i)}
{\sum_{i=1}^q\binom{|V_i|}{2}}
\le\varepsilon.
\]
\end{enumerate}
If $\mathsf G_n'$ is obtained by deleting all edges whose endpoints lie
in the same part, then $\mathsf G_n'$ is $q$-partite and
\[
d_{\mathrm{edit}}
\left(\mathsf G_n^{\mathrm{mc}},\mathsf G_n'\right)
\le
\frac{\varepsilon}{10q}
\le
\frac{\varepsilon}{2}.
\]
\end{enumerate}
In particular, the reduced free-energy limit and the cut-distance limit
depend on the weight vector $(w_3,\dots,w_\ell)$ only through $r$. Once
$r$ and $w_r$ are fixed, the constants in parts~\textup{(ii)} and
\textup{(iii)} can be chosen independently of $\ell$ and of the weights
$w_{r+1},\dots,w_\ell$.
\end{theor}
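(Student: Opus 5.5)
The plan is to rerun the single-clique proof of Theorem~\ref{thm:main-clique}, replacing the reduced mass $\exp(-r!\,w\,K_r(G))$ by $M_n^{\mathrm{mc}}(G)$. The whole argument rests on a sandwich. Since every $w_k\ge0$, every $K_k(G)\ge0$, and $w_k=0$ for $k<r$, we have
\[
M_n^{\mathrm{mc}}(G)\le \exp\bigl(-r!\,w_r\,K_r(G)\bigr)
\]
for every $G\in\mbG_n$. On the other hand, every $K_r$-free graph $G$ is also $K_k$-free for all $k\ge r$, so $M_n^{\mathrm{mc}}(G)=1$ for such $G$. The upper bound is exactly the reduced mass of the model $\mbbP_{n,r}$ with weight $w_r>0$. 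Hence
\[
\widetilde Z_n^{\mathrm{mc}}\le \widetilde Z_{n,r}(w_r).
\]
The lower bound comes from Lemma~\ref{lem:balanced-q-partite-lower}: its construction uses only $q$-partite graphs, each of which has $M_n^{\mathrm{mc}}=1$. This gives
\[
\widetilde Z_n^{\mathrm{mc}}\ge \exp\bigl(s_qn^2-q\ln 2/8\bigr).
\]

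For part (i), I take $\frac1{n^2}\ln$ of both bounds. The upper bound is controlled by Theorem~\ref{thm:main-clique}(i) with weight $w_r$, and the lower bound tends to $s_q$. The two limits coincide, which proves (i).

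For part (ii), I would not compare probabilities directly, since the normalizers differ. Instead I repeat the proof of Theorem~\ref{thm:main-clique}(ii) with $w=w_r$. The probability of $\mc A_{n,\varepsilon}$ equals a sum of masses divided by $\widetilde Z_n^{\mathrm{mc}}$.
\begin{itemize}
\item \emph{Numerator.} By the pointwise domination, the numerator is at most $R^{\mathrm{large}}_{n,\varepsilon}+R^{\mathrm{small}}_{n,\varepsilon}$, computed with weight $w_r$. Lemma~\ref{lem:large-Kr-density-negligible} (with $w=w_r$) bounds the large part. Lemmas~\ref{lem:near-gap-Kr}, \ref{lem:cut-closed-constraints} and~\ref{lem:entropy-counting} bound the small part.
\item \emph{Denominator.} I bound it below by $\exp(s_qn^2-C_q)$, which is the same bound used in the single-clique proof.
\end{itemize}
Thus the exact estimate $e^{-c'n^2/4}$ goes through unchanged. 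Here $c'$ depends only on $\varepsilon$ and $r$, and the threshold $N$ depends on $\varepsilon$, $r$ and $w_r$. Nothing depends on $\ell$ or on $w_{r+1},\dots,w_\ell$, because those weights were discarded by the domination.

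For part (iii), I copy the proof of Theorem~\ref{thm:main-clique}(iii) verbatim. Apply (ii) with $\rho=\eta/(100q^2)$, where $\eta=\min\{\varepsilon,1\}$, and then apply the deterministic rounding Lemma~\ref{lem:det-round-Bqstar}. That lemma concerns a fixed graph and does not involve the measure at all.

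The only point needing care is checking that no step of the earlier proof used the exact form of the mass. In particular, the proof must use only the two facts established at the start: the pointwise upper bound, and the fact that mass equals $1$ on $K_r$-free graphs. The lower bound on the partition function must come from the latter fact rather than from Theorem~\ref{thm:main-clique}(i). I expect no genuine obstacle beyond this bookkeeping.
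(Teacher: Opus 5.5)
Your proof is correct. The sandwich $\exp(s_qn^2-C_q)\le\widetilde Z_n^{\mathrm{mc}}\le\widetilde Z_{n,r}(w_r)$ and the derivation of (i) are the same as in the paper. For (ii) and (iii) you take a different route.

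Contrary to your remark, the paper does compare probabilities directly. Pointwise domination gives $\mbbP_n^{\mathrm{mc}}(\mc A)\le R_n\,\mbbP_{n,r}(\mc A)$ with $R_n=\widetilde Z_{n,r}/\widetilde Z_n^{\mathrm{mc}}$. The sandwich together with Theorem~\ref{thm:main-clique}(i) gives $0\le n^{-2}\ln R_n\le F_{n,r}(w_r)-s_q+C_q/n^2\to0$. Hence any bound $e^{-an^2}$ under $\mbbP_{n,r}$ becomes $e^{-an^2/2}$ under $\mbbP_n^{\mathrm{mc}}$. The paper applies this as a black box to the event $\delta_\square(W_G,B_q^\star)\ge\varepsilon$ and to the set of graphs admitting no good partition. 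Neither the entropy-gap argument nor the rounding lemma is revisited.

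You instead reopen the proof of Theorem~\ref{thm:main-clique}(ii). You bound the numerator by $R^{\mathrm{large}}_{n,\varepsilon}+R^{\mathrm{small}}_{n,\varepsilon}$ computed with weight $w_r$, bound the denominator by the explicit $q$-partite count, and then rerun the proof of (iii).

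Your route has two advantages. It makes (ii) and (iii) independent of the free-energy limit (i), and hence of the Chatterjee--Diaconis results used for its upper bound. It also keeps the rate $c'/4$ with no loss. The paper's route is more modular: every exponentially rare event of the single-clique model transfers without inspecting how its bound was proved. In both versions the constants $c$ depend only on $\varepsilon$ and $r$, and the thresholds depend only on $\varepsilon$, $r$ and $w_r$.
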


\begin{proof}
Let $\widetilde Z_{n,r}$, $F_{n,r}(w_r)$, and $\mbbP_{n,r}$ denote the
reduced partition function, reduced free energy, and probability measure of
the single-$K_r$ model with weight $w_r$, and set
\[
C_q\coloneqq\frac{q\ln 2}{8}.
\]
Since the higher weights are nonnegative, every $G\in\mbG_n$ satisfies
\begin{equation}\label{eq:multi-single-mass-comparison}
0<M_n^{\mathrm{mc}}(G)
\le
\exp\bigl(-r!\,w_rK_r(G)\bigr)
\le1.
\end{equation}
Every graph in the balanced $q$-partite family used in the proof of
Lemma~\ref{lem:balanced-q-partite-lower} is $K_r$-free and hence
$K_k$-free for every $k\ge r$. Its multi-clique reduced mass is therefore $1$. 
Let 
\[
s_q := \frac{\ln 2}{2}\bigg(1 - \frac{1}{q}\bigg).
\]
The same lower-bound construction, together with
\eqref{eq:multi-single-mass-comparison}, gives
\begin{equation}\label{eq:multi-clique-partition-comparison}
\exp\bigl(s_qn^2-C_q\bigr)
\le
\widetilde Z_n^{\mathrm{mc}}
\le
\widetilde Z_{n,r}.
\end{equation}
This comparison explains the role of the smallest positively weighted
clique; the lower bound is unaffected by any clique penalty, while the
upper bound retains only the $K_r$ penalty.

For part~\textup{(i)}, taking logarithms in
\eqref{eq:multi-clique-partition-comparison} and dividing by $n^2$ gives
\[
s_q-\frac{C_q}{n^2}
\le
F_n^{\mathrm{mc}}
\le
F_{n,r}(w_r).
\]
The conclusion follows from
Theorem~\ref{thm:main-clique}\textup{(i)}.

For parts~\textup{(ii)} and~\textup{(iii)}, set
\[
R_n\coloneqq
\frac{\widetilde Z_{n,r}}{\widetilde Z_n^{\mathrm{mc}}}.
\]
By \eqref{eq:multi-single-mass-comparison}, every event
$\mc A\subseteq\mbG_n$ satisfies
\begin{equation}\label{eq:multi-clique-measure-comparison}
\mbbP_n^{\mathrm{mc}}(\mc A)
\le
R_n\,\mbbP_{n,r}(\mc A).
\end{equation}
Moreover, \eqref{eq:multi-clique-partition-comparison} implies
\[
0
\le
\frac{1}{n^2}\ln R_n
\le
F_{n,r}(w_r)-s_q+\frac{C_q}{n^2},
\]
and hence, by Theorem~\ref{thm:main-clique}\textup{(i)},
\begin{equation}\label{eq:multi-clique-ratio-subexponential}
\lim_{n\to\infty}\frac{1}{n^2}\ln R_n=0.
\end{equation}
The upper bound depends only on $n$, $r$, and $w_r$, and it tends to $0$.
Since the same bound holds simultaneously for every $\ell\ge r$ and every
choice of $w_{r+1},\dots,w_\ell\ge0$, the convergence is uniform in those
parameters.

Consequently, suppose that a sequence of events $(\mc A_n)$ satisfies
\[
\mbbP_{n,r}(\mc A_n)\le e^{-an^2}
\]
for some $a>0$ and all sufficiently large $n$. After increasing the
threshold if necessary,
\eqref{eq:multi-clique-ratio-subexponential} gives
$R_n\le e^{an^2/2}$, and
\eqref{eq:multi-clique-measure-comparison} gives
\[
\mbbP_n^{\mathrm{mc}}(\mc A_n)
\le
e^{-an^2/2}.
\]

For part~\textup{(ii)}, apply this observation to
\[
\mc A_n
=
\left\{
G\in\mbG_n:
\delta_\square(W_G,B_q^\star)\ge\varepsilon
\right\}
\]
and use Theorem~\ref{thm:main-clique}\textup{(ii)}.

For part~\textup{(iii)}, let $\mc B_{n,\varepsilon}$ be the set of graphs
that admit no partition satisfying the five properties stated there.
Theorem~\ref{thm:main-clique}\textup{(iii)} gives an exponential bound for
$\mbbP_{n,r}(\mc B_{n,\varepsilon})$, so the same observation gives the
required bound under $\mbbP_n^{\mathrm{mc}}$. The edit-distance conclusion
follows from any partition satisfying those five properties, exactly as in
Theorem~\ref{thm:main-clique}\textup{(iii)}.

For each of parts~\textup{(ii)} and~\textup{(iii)}, one may choose
$c=c(\varepsilon,r)>0$ and $N=N(\varepsilon,r,w_r)$. Neither choice
depends on $\ell$ or on the higher weights
$w_{r+1},\dots,w_\ell$.
\end{proof}


\end{document}